\documentclass[12pt,a4paper]{article}

\usepackage{amsmath}
\usepackage{amssymb}
\usepackage{amsthm}
\usepackage{mathrsfs}
\usepackage[all]{xy}
\usepackage{graphicx}
\usepackage{bm}
\usepackage{comment}

\setlength{\oddsidemargin}{-15pt}
\setlength{\textwidth}{\paperwidth}
\addtolength{\textwidth}{-2in}
\addtolength{\textwidth}{-2\oddsidemargin}

\setlength{\topmargin}{-55pt}
\setlength{\textheight}{725pt}

\newtheorem{thm}{Theorem}[section]
\newtheorem{prop}[thm]{Proposition}
\newtheorem{defn}[thm]{Definition}
\newtheorem{lem}[thm]{Lemma}
\newtheorem{cor}[thm]{Corollary}

\newtheorem{rem}[thm]{Remark}

\newtheorem*{thmn}{Theorem}

\makeatletter

\@addtoreset{equation}{section}
\makeatother

\newcommand{\cf}{\textit{cf.\ }}

\DeclareMathOperator{\Gal}{Gal}
\DeclareMathOperator{\id}{id}
\DeclareMathOperator{\tr}{tr}

\DeclareMathOperator{\End}{End}
\DeclareMathOperator{\Hom}{Hom}

\DeclareMathOperator{\Spf}{Spf}

\DeclareMathOperator{\Ind}{Ind} 
\DeclareMathOperator{\Tr}{Tr}
\DeclareMathOperator{\Nr}{Nr}
\DeclareMathOperator{\Trd}{Trd}
\DeclareMathOperator{\Nrd}{Nrd}

\DeclareMathOperator{\Spa}{Spa}
\DeclareMathOperator{\Lie}{Lie}
\DeclareMathOperator{\Art}{Art}
\DeclareMathOperator{\sgn}{sgn}
\DeclareMathOperator{\pr}{pr}

\newcommand{\bom}[1]{\mbox{\boldmath $#1$}}

\newcommand{\bA}{\mathbb{A}}

\newcommand{\bF}{\mathbb{F}}

\newcommand{\bQ}{\mathbb{Q}}

\newcommand{\bZ}{\mathbb{Z}}


\newcommand{\bfC}{\mathbf{C}}

\newcommand{\bfM}{\mathbf{M}}


\newcommand{\bfg}{\mathbf{g}}



\newcommand{\cB}{\mathcal{B}}

\newcommand{\cD}{\mathcal{D}}

\newcommand{\cG}{\mathcal{G}}

\newcommand{\cL}{\mathcal{L}}
\newcommand{\cM}{\mathcal{M}}

\newcommand{\cO}{\mathcal{O}}

\newcommand{\cS}{\mathcal{S}}

\newcommand{\cX}{\mathcal{X}}


\newcommand{\fp}{\mathfrak{p}}

\newcommand{\fF}{\mathfrak{F}}

\newcommand{\fI}{\mathfrak{I}}

\newcommand{\fX}{\mathfrak{X}}



\newcommand{\iGL}{\mathit{GL}}

\newcommand{\rmab}{\mathrm{ab}}
\newcommand{\rmur}{\mathrm{ur}}
\newcommand{\rmac}{\mathrm{ac}}

\newcommand{\ol}{\overline}
\newcommand{\wh}{\widehat}

\begin{document}

\title{Affinoids in the Lubin--Tate perfectoid space and\\ simple supercuspidal representations II: wild case}
\author{Naoki Imai and Takahiro Tsushima}
\date{}
\maketitle


\begin{abstract} 
We construct a family of affinoids in the Lubin--Tate perfectoid space 
and their formal models such that 
the middle cohomology of their reductions 
realizes the 
local Langlands correspondence and the 
local Jacquet--Langlands correspondence 
for the simple supercuspidal representations. 
The reductions of the formal models are 
isomorphic to the perfections of some Artin--Schreier varieties, 
whose cohomology realizes 
primitive Galois representations. 
We show also the Tate conjecture for 
Artin--Schreier varieties associated to quadratic forms. 
\end{abstract}

\section*{Introduction}
Let $K$ be a non-archimedean local field with residue field $k$. 
Let $p$ be the characteristic of $k$. 
We write $\mathcal{O}_K$ for the ring of integers of $K$, 
and $\mathfrak{p}$ for the maximal ideal of $\mathcal{O}_K$. 
We fix an algebraic closure $k^{\mathrm{ac}}$ of $k$. 
The Lubin--Tate spaces are deformation spaces of 
the one-dimensional formal $\mathcal{O}_K$-module 
over $k^{\mathrm{ac}}$ of height $n$ with level structures. 
We take a prime number $\ell$ that is different from $p$. 
The local Langlands correspondence (LLC) and the 
local Jacquet--Langlands correspondence (LJLC) 
for supercuspidal representations of $\mathit{GL}_n$ 
are realized in the $\ell$-adic cohomology of Lubin--Tate spaces. 
This is proved in \cite{BoyMDr} and \cite{HTsimSh} 
by global automorphic arguments. 
On the other hand, 
the relation between 
these correspondences and 
the geometry of Lubin--Tate spaces is 
not well understood. 

In this direction, 
Yoshida constructs a semi-stable model of the Lubin--Tate space 
with a full level $\fp$-structure, 
and studies its relation with the LLC 
in \cite{YoLTv}. 
In this case, 
the Deligne--Lusztig varieties appear as open subschemes in 
the reductions of the semi-stable models, 
and their cohomology 
realizes the LLC for depth zero supercuspidal representations. 
In \cite{BWMax}, 
Boyarchenko--Weinstein 
construct a family of affinoids in 
the Lubin--Tate perfectoid space and their formal models 
so that 
the cohomology of the reductions 
realizes the LLC and the LJLC for 
some representations 
which are related to unramified extensions of $K$ 
(\cf \cite{WeGood} for some special case at a finite level). 
It generalizes 
a part of the result in \cite{YoLTv} 
to higher conductor cases. 
In the Lubin--Tate perfectoid setting, 
the authors study the case for 
the essentially tame simple supercuspidal representations 
in \cite{ITsimptame}, 
where simple supercuspidal means that 
the exponential Swan conductor is equal to one. 
See \cite{BHestI} for the notion of 
essentially tame  representations. 
The result in \cite{ITsimptame} 
is generalized to some higher conductor 
essentially tame cases by Tokimoto in \cite{TokLTs} 
(\cf \cite{ITLTsurf} for some special case at a finite level). 

In all the above cases, 
Langlands parameters are of the form 
$\Ind_{W_L}^{W_K} \chi$ 
for a finite separable extension $L$ over $K$ 
and a character $\chi$ of $W_L$, 
where $W_K$ and $W_L$ denote the Weil groups of 
$K$ and $L$ respectively. 
Further, the construction of affinoids 
directly involves CM points which have multiplication 
by $L$. 
In this paper, 
we study the case for 
simple supercuspidal representations 
which are not essentially tame. 
In this case, 
the Langlands parameters can not be 
written as inductions of characters. 
Hence, we have no canonical candidate of CM points 
which may be used for constructions of affinoids. 

We will explain our main result. 
All the representation are 
essentially tame if $n$ is prime to $p$. 
Hence, we assume that $p$ divides $n$. 
We say that 
a representation of $\iGL_n (K)$ is 
essentially simple supercuspidal if 
it is a character twist of a 
simple supercuspidal representation. 
Let 
$q$ be the number of the elements of $k$ and 
$D$ be the central division algebra over $K$ of invariant $1/n$. 
We write $q=p^f$ and $n=p^e n'$, where $n'$ is prime to $p$. 
We put $m =\gcd (e,f)$. 
The main theorem is the following: 
\begin{thmn}
For $r \in \mu_{q-1}(K)$, 
there is an affinoid $\cX_r$ in the Lubin--Tate perfectoid space 
and its formal model $\fX_r$ such that 
\begin{itemize}
\item 
the special fiber $\ol{\fX}_r$ of $\fX_r$ is isomorphic to 
the perfection of the affine smooth variety defined by 
\[
 z^{p^m} -z =y^{p^e +1} -\frac{1}{n'} 
 \sum_{1 \leq i \leq j \leq n-2} y_i y_j \quad 
 \textrm{in $\bA_{k^{\rmac}}^n$}, 
\]
\item 
the stabilizer $H_r \subset \iGL_n (K) \times D^{\times} \times W_K$ 
of $\cX_r$ naturally acts on $\ol{\fX}_r$, and 
\item 
$\mathrm{c\mathchar`-Ind}_{H_r}^{\iGL_n (K) \times D^{\times} \times W_K} 
 H_{\mathrm{c}}^{n-1}(\ol{\fX}_r,\ol{\bQ}_{\ell} )$ 
 realizes the LLC and the LJLC for 
 essentially simple supercuspidal representations. 
\end{itemize}
\end{thmn}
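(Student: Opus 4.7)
The plan is to imitate the strategy of \cite{BWMax}, \cite{ITsimptame} and \cite{TokLTs}, but without access to a CM point as an organizing center, since here the Langlands parameter of a simple supercuspidal representation is not induced from a character. First I would fix good coordinates on the Lubin-Tate perfectoid space $\cM_\infty$. Concretely, work with a universal deformation $\cF$ of the formal $\cO_K$-module of height $n$ and use the compatible system of Drinfeld level structures to produce parameters $X_1,\dots,X_n$ on $\cM_\infty$ (up to Frobenius-th roots), with $X_i$ a generator of $\cF[\fp^i]/\cF[\fp^{i-1}]$. The key step is then to write down, for each $r\in\mu_{q-1}(K)$, explicit valuation constraints on $X_1,\dots,X_n$ defining a rational subset $\cX_r\subset\cM_\infty$; the shape of these inequalities should be dictated by the requirement that (after dividing by appropriate powers of a uniformizer $\pi$ and taking suitable $p$-power roots) the reduction is governed by the simple supercuspidal ``affine generic'' character of the pro-unipotent radical of an Iwahori.

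Having fixed $\cX_r$, the second step is to construct an integral model $\fX_r$ by choosing a Weierstrass-type parameter system on $\cX_r$: I would introduce new variables $y,y_1,\dots,y_{n-2}$ and $z$ as explicit $\cO_K$-linear combinations of $\pi$-adic rescalings of $X_1,\dots,X_n$ and their Frobenius twists. The reduction step consists in expanding the $\cO_K$-module law of $\cF$ modulo $\fp^2$ (this is where $r$ enters, as a choice of distinguished element $\pi\cdot[r]$ in a filtration quotient) and carefully tracking the contributions up to the first non-vanishing order. I expect this to produce, after eliminating dependent variables and applying the Artin-Schreier identification $T^{p^m}-T=\,\cdot\,$ coming from the inseparable part of $\mathrm{Fr}^f$ acting on a one-dimensional subspace stabilized by $W_K$, exactly the equation
\[
 z^{p^m}-z=y^{p^e+1}-\frac{1}{n'}\sum_{1\leq i\leq j\leq n-2} y_i y_j
\]
displayed in the statement. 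The main technical obstacle is this combinatorial bookkeeping: one must show that the cross terms in the formal group law collapse to a non-degenerate quadratic form in $y_1,\dots,y_{n-2}$ and a pure $(p^e+1)$-th power in $y$, and that all higher order terms become negligible on $\cX_r$. The non-degeneracy of the quadratic form is essential for the ensuing cohomological computation.

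Third, I would determine the stabilizer $H_r\subset\iGL_n(K)\times D^\times\times W_K$. The candidate is the subgroup generated by $(\cO_K^\times\cdot I_n)$, the pro-$p$ Iwahori subgroup of $\iGL_n(K)$ of the form $U_1$, the analogous filtration subgroup $1+\fp_D$ on the division algebra side, a uniformizer element matching an affine simple reflection, and an inertia subgroup of $W_K$ of appropriate depth. The verification that each such element preserves $\cX_r$ and induces the expected automorphism of $\ol{\fX_r}$ is a direct computation in the chosen coordinates using the Lubin-Tate and Drinfeld descriptions of these group actions; one checks that modulo the relevant filtrations these elements act through the Artin-Schreier covering group $\bF_{p^m}$ and through linear transformations of $y,y_1,\dots,y_{n-2}$ preserving the displayed equation up to translation by $\bF_{p^m}$.

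Finally, for the cohomological statement I would decompose $H^{n-1}_{\mathrm{c}}(\ol{\fX_r},\ol\bQ_\ell)$ according to characters $\psi$ of $\bF_{p^m}$, identify each $\psi$-isotypic component as a Deligne-Lusztig-type induction from a character of $H_r$, and match the resulting compactly induced representation with essentially simple supercuspidal types via the construction of Bushnell-Henniart and the explicit LJLC. The matching of Weil parameters follows by Hasse-Arf-type computations of Swan conductors of the Artin-Schreier sheaf $\cL_\psi$ pulled back through $y^{p^e+1}$, combined with the inertia action determined in the previous step; uniqueness of simple supercuspidals with given central character and parameter then pins down the bijection as $r$ varies in $\mu_{q-1}(K)$.
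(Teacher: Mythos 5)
Your plan correctly identifies the overall architecture (affinoid in the Lubin--Tate perfectoid space, integral model, Artin--Schreier reduction, group action, matching against an explicit parametrization of simple supercuspidals), and it matches the paper's at that level. But it leaves out the one step that is actually the crux of this paper, and gets the relationship to CM points backwards. You say you will work ``without access to a CM point as an organizing center'' and will simply ``write down explicit valuation constraints on $X_1,\dots,X_n$''; but you give no mechanism for discovering what those constraints should be, and this is precisely where the difficulty lies. The paper \emph{does} start from a CM point: it takes $\varpi_r=r\varpi$, adjoins $\varphi_r=\varpi_r^{1/n}$, and produces a CM point $\xi_r$ with multiplication by $L_r=K(\varphi_r)$ (Lemma~\ref{exxi}). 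The key observation, stated in the introduction and realized in Section~\ref{ConstAff}, is that the affinoid centered at $\xi_r$ itself has a \emph{degenerate} reduction whose cohomology is not supercuspidal; the decisive new idea is to perturb $\xi_r$ to a non-CM point $\xi_r^0$ by solving the explicit auxiliary equations \eqref{defthelam} for $\bom{\theta}_r$ and $\bom{\lambda}_r$, which are tailored to the structure of the simple supercuspidal Langlands parameter (specifically, to the tower $K\subset E_r\subset L_r$ with $E_r=K(\varphi_r^{p^e})$ and the associated Heisenberg-type group $Q$). The affinoid $\cX_r$ is then defined by the inequalities \eqref{defcX} relative to this shifted center, and the coordinate changes \eqref{Yidef}--\eqref{ydef} that linearize the reduction are dictated by that shift. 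A plan that omits this modification would, as the authors explicitly warn, reproduce the degenerate situation and fail to see a supercuspidal.

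Two further omissions are worth flagging. First, to pin down the action of the ``Frobenius'' element $\bfg_r$ on $H^{n-1}_{\mathrm c}(\ol{\fX_r})$ the paper needs the sign computation of Corollary~\ref{AS3}, which requires the dedicated Chow-group argument of Section~\ref{CompAS} (cycle classes $[Z^\pm_\zeta]$ and Proposition~\ref{AS1}); this is not a ``direct computation in the chosen coordinates'' and your plan does not account for it. Second, the final matching of $\mathrm{c\text{-}Ind}_{H_r}^G H_{\mathrm c}^{n-1}(\ol{\fX_r})$ against the LLC and LJLC relies on the explicit descriptions $\mathrm{LL}(\pi_{\zeta,\chi,c})=\tau_{\zeta,\chi,c}$ and $\mathrm{JL}(\rho_{\zeta,\chi,c})=\pi_{\zeta,\chi,c}$ imported from \cite{ITsimpJL} and \cite{ITlgsw1} (Theorem~\ref{expJL}); proving these from scratch via Bushnell--Henniart and Swan conductor estimates, as you suggest, would be a substantially larger undertaking than what is needed and is not the route taken.
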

See Theorem \ref{thm:red} and Theorem \ref{Pireal} for precise statements. 
As we mentioned, 
we have no candidate of CM points 
for the construction of affinoids. 
First, we consider a CM point $\xi$ 
which has multiplication by 
a field extension of $K$ obtained by adding an $n$-th root of 
a uniformizer of $K$. 
If we imitate the construction of 
affinoids in \cite{ITsimptame} using the CM point $\xi$, 
we can get a non-trivial affinoid and its model, 
but the reduction degenerates in some sense, and 
the cohomology of the reduction 
does not give a supercuspidal representation. 
What we will do in this paper is to modify 
the CM point $\xi$ using information of 
field extensions which appear in the study of 
our simple supercuspidal Langlands parameter. 
The modified point, 
which is constructed in Proposition \ref{exxi'}, 
is no longer a CM point, 
but we can use this point for a construction of a desired affinoid. 
Since the modification comes from the study of 
the Langlands parameter, 
we expect that such constructions work also for other 
Langlands parameters. 

In the above mentioned preceding researches, 
the Langlands parameters are 
inductions of characters, and 
realized from 
commutative group actions on varieties. 
In the case for Deligne--Lusztig varieties, 
they come from the natural action of tori. 
In our simple supercuspidal case, 
they come from non-commutative group actions. 
For example, 
the restriction to the inertia subgroup 
of a simple supercuspidal Langlands parameter 
factors through a semidirect product of 
a cyclic group with a Heisenberg type group, 
which acts on our Artin--Schreier variety in a very non-trivial way. 

In the following, 
we briefly explain the content of each section. 
In Section \ref{LTps}, we collect known 
results on the Lubin--Tate perfectoid space, 
its formal model and group action on it. 

In Section \ref{GoodRedAff}, 
we construct a family of affinoids and their formal models. 
Further we determine the reductions of them.
The reduction is isomorphic to 
the perfection of some Artin--Schreier 
variety.  

In Section \ref{GroupRed}, 
we describe the group action on
the reductions. 
In Section \ref{CompAS}, we show that 
the Tate conjecture holds for 
Artin--Schreier varieties of associated to quadratic forms. 
Further, we study the action of some special element on 
cycle classes in the etale cohomology of the Artin--Schreier variety. 
This becomes a key ingredient for the proof of the main theorem. 

In Section \ref{sec:explicit}, 
we give an explicit description of 
the LLC and the LJLC for essentially simple supercuspidal representations, 
which follows from results in \cite{ITlgsw1} and \cite{ITsimpJL}. 
In Section \ref{GeomRela}, 
we give a geometric realization of 
the LLC and the LJLC in the cohomology of our reduction.

\subsection*{Acknowledgements}
The authors would like to thank a referee for 
helpful comments and suggestions. 
This work was supported by JSPS KAKENHI Grant Numbers 
26707003, 15K17506, 18H01109, 20K03529. 

\subsection*{Notation}
For a non-archimedean valuation field $F$, 
its valuation ring is denoted by $\mathcal{O}_F$. 
For a non-archimedean valuation field $F$ 
and an element $a \in \mathcal{O}_F$, 
its image in the residue field is denoted by $\bar{a}$. 
For $a \in \mathbb{Q}$ and 
elements $f$, $g$ with valuation $v$ that takes 
values in $\mathbb{Q}$, 
we write 
$f \equiv g \mod\!_{\geq}\, a$ 
if $v(f-g) \geq a$, and  
$f \equiv g  \mod\!_{>}\, a$ 
if $v(f-g) >a$. 
For a topological field extension $E$ over $F$, 
let $\Gal (E/F)$ denote the group of 
the continuous automorphisms of $E$ over $F$. 
For an ideal $I$ of a topological ring, 
let $I^-$ denote the closure of $I$. 

\section{Lubin--Tate perfectoid space}\label{LTps}
\subsection{Lubin--Tate perfectoid space and its formal model}\label{LTpsfm}
Let $K$ be a non-archimedean local field with 
residue field $k$ of characteristic $p$. 
Let $q$ be the number of the elements of $k$. 
We write $\mathfrak{p}$ for the maximal ideal of $\mathcal{O}_K$. 
We fix an algebraic closure 
$K^{\mathrm{ac}}$ of $K$. 
Let 
$k^{\mathrm{ac}}$ be the residue field of 
$K^{\mathrm{ac}}$. 

Let $n$ be a positive integer. 
We take a one-dimensional formal 
$\mathcal{O}_K$-module $\cG_0$ over $k^{\mathrm{ac}}$ 
of height $n$, 
which is unique up to isomorphism. 
Let $K^{\mathrm{ur}}$
be the maximal unramified extension of $K$ in $K^{\mathrm{ac}}$.
We write $\widehat{K}^{\mathrm{ur}}$ for the completion of $K^{\mathrm{ur}}$. 
Let $\{ \Spf A_m \}_{m \geq 0}$ 
be the tower of Lubin-Tate formal schemes defined by 
Drinfeld level $\mathfrak{p}^m$-structure 
as explained in \cite[\S 1.1]{ITsimptame}. 
Note that the generic fibers of these formal schemes 
are connected components of usual Lubin-Tate spaces. 
Let $I$ the ideal of 
$\varinjlim A_m$ generated by 
the maximal ideal of $A_0$. 
Let $A$ be the $I$-adic 
completion of $\varinjlim A_m$. 
We put $\bfM_{\cG_0, \infty}=\Spf A$. 

Let $K^{\mathrm{ab}}$ be the maximal abelian 
extension of $K$ in $K^{\mathrm{ac}}$. 
We write 
$\widehat{K}^{\mathrm{ab}}$ for the completion of 
$K^{\mathrm{ab}}$. 
Let $\wedge \cG_0$ 
denote the one-dimensional formal 
$\mathcal{O}_K$-module over $k^{\mathrm{ac}}$ of height one. 
Then we have 
$\bfM_{ \wedge \cG_0, \infty} \simeq 
 \Spf \mathcal{O}_{\widehat{K}^{\mathrm{ab}}}$ 
by the Lubin--Tate theory. 
We have a determinant morphism 
\begin{equation}\label{mor}
 \bfM_{\cG_0, \infty} \to 
 \bfM_{\wedge \cG_0, \infty} 
\end{equation} 
by \cite[2.5 and 2.7]{WeSemi} 
(\cf \cite{HedPhD}). 
Then, we have the ring homomorphism 
$\cO_{\wh{K}^{\rmab}} \to A$ 
determined by \eqref{mor}. 

We fix a uniformizer 
$\varpi$ of $K$. 
Let 
$\mathcal{M}_{\infty}$ 
be the open adic subspace of 
$\Spa (A,A)$ defined by 
$|\varpi(x)| \neq 0$ 
(\cf \cite[2]{Hugfs}). 
We regard $\mathcal{M}_{\infty}$ 
as an adic space over $\wh{K}^{\rmur}$. 
Let $\mathbf{C}$ be the completion of $K^{\mathrm{ac}}$. 
For a deformation $\cG$ of $\cG_0$ over $\cO_{\bfC}$, 
we put 
\[
 V_{\fp} (\cG) =\bigl( \varprojlim \cG(\cO_{\bfC})[\fp^m] \bigr) 
 \otimes_{\cO_K} K, 
\]
where 
$\cG(\cO_{\bfC})[\fp^m]$ denotes the $\cO_K$-module of 
the $\fp^m$-torsion points of 
$\cG(\cO_{\bfC})$ and 
the transition maps are multiplications by $\varpi$. 
By the construction, each point of 
$\cM_{\infty}(\bfC)$ 
corresponds to a triple $(\cG,\phi,\iota)$ 
that consists of 
a formal $\cO_K$-module $\cG$ over $\cO_{\bfC}$, 
an isomorphism $\phi \colon K^n \to V_{\fp} (\cG)$ and 
an isomorphism 
$\iota \colon \cG_0 \to \cG \otimes_{\cO_{\bfC}} k^{\rmac}$ 
(\cf \cite[Definition 2.10.1]{BWMax}). 

We put 
$\eta=\Spa (\wh{K}^{\mathrm{ab}}, 
 \cO_{\wh{K}^{\mathrm{ab}}})$. 
By the ring homomorphism 
$\cO_{\wh{K}^{\mathrm{ab}}} \to A$, 
we can regard $\cM_{\infty}$ as an 
adic space over $\eta$, 
for which we write 
$\mathcal{M}_{\infty, \eta}$. 
We put 
$\bar{\eta}=\Spa (\mathbf{C}, \mathcal{O}_{\mathbf{C}})$ 
and 
$\mathcal{M}_{\infty, \overline{\eta}} = 
 \mathcal{M}_{\infty, \eta} \times_{\eta} \ol{\eta}$. 
Then, 
$\mathcal{M}_{\infty, \overline{\eta}}$ 
is a perfectoid space over $\mathbf{C}$ 
in the sense of 
\cite[Definition 6.15]{SchPerf} 
by \cite[Lemma 2.32]{WeSemi}. 
We call 
$\mathcal{M}_{\infty, \overline{\eta}}$ 
the Lubin--Tate perfectoid space. 

In the following, 
we recall an explicit description of 
$A^{\circ}=A \widehat{\otimes}_{\mathcal{O}_{\widehat{K}^{\mathrm{ab}}}} 
 \mathcal{O}_{\mathbf{C}}$ 
given in \cite[(2.8)]{WeSemi}. 
Let $\wh{\cG}_0$ be the formal $\cO_K$-module 
over $\cO_K$ whose logarithm is 
\[
 \sum_{i=0}^{\infty} \frac{X^{q^{in}}}{\varpi^i} 
\]
(\cf \cite[2.3]{BWMax}). 
Let $\cG_0$ be the formal $\cO_K$-module over 
$k^{\rmac}$ obtained as the reduction of $\wh{\cG}_0$. 
We put $\mathcal{O}_D =\End \cG_0$ and 
$D=\mathcal{O}_D \otimes_{\cO_K} K$, which is 
the central division algebra over $K$ of invariant $1/n$. 
Let $[\ \cdot \ ]$ denote the action of $\mathcal{O}_D$ on $\cG_0$. 
Let $\varphi$ be the element of $D$ such that $[\varphi](X)=X^q$. 
Let $K_n$ 
be the unramified extension of $K$ of degree $n$. 
We consider the $K$-algebra embedding of $K_n$ 
into $D$ determined by 
\[
 [\zeta](X)=\bar{\zeta} X \ \ 
 \text{for} \ \ \zeta \in \mu_{q^n -1}(K_n). 
\]
Then we have $\varphi^n=\varpi$ and 
$\varphi \zeta =\zeta^q \varphi$ for $\zeta \in \mu_{q^n -1} (K_n)$. 
Let $\wh{\wedge \cG_0}$ 
be the one-dimensional formal $\mathcal{O}_K$-module 
over $\cO_K$ whose logarithm is 
\[
 \sum_{i=0}^{\infty} (-1)^{(n-1)i} \frac{X^{q^i}}{\varpi^i}. 
\]
We choose a compatible system 
$\{t_m\}_{m \geq 1}$ such that 
\begin{equation}\label{tmcho}
 t_m \in {K}^{\mathrm{ac}} \quad (m \geq 1), \quad 
 t_1 \neq 0, \quad 
 [\varpi]_{\wh{\wedge \cG_0}}(t_1)=0, \quad 
 [\varpi]_{\wh{\wedge \cG_0}}(t_m)=t_{m-1} \quad (m \geq 2).
\end{equation}
We put
\[
 t=\lim_{m \to \infty}(-1)^{q(n-1)(m-1)} t_m^{q^{m-1}} 
 \in \mathcal{O}_{\mathbf{C}}.
\]
Let $v$ be the normalized valuation of $K$ 
such that $v(\varpi)=1$. 
The valuation $v$ naturally extends to a valuation on 
$\mathbf{C}$, for which we again write $v$. 
Note that $v(t)=1/(q-1)$. 
For an integer $i \geq 0$, we put 
\[
 t^{q^{-i}} =\lim_{m \to \infty} (-1)^{q(n-1)(m-1)} t_m^{q^{m-i-1}}. 
\]

Let $W_K$ be the 
Weil group of $K$. 
Let 
$\mathrm{Art}_K 
\colon K^{\times} \xrightarrow{\sim} W_K^{\mathrm{ab}}$
be the Artin 
reciprocity map normalized 
such that a uniformizer is sent to a
lift of the geometric Frobenius element. 
We use similar normalizations also for the Artin reciprocity maps 
for other non-archimedean local fields. 
Let $\sigma \in W_{K}$. 
Let $n_{\sigma}$ 
be the image of $\sigma$ under the composite 
\[
 W_K \twoheadrightarrow W_K^{\mathrm{ab}} 
 \xrightarrow{\mathrm{Art}^{-1}_K}
 K^{\times} \xrightarrow{v} \mathbb{Z}. 
\]
Let $a_K \colon W_K \to \mathcal{O}_K^{\times}$ 
be the homomorphism given by the action of $W_K$ 
on $\{t_m\}_{m \geq 1}$. 
It induces an isomorphism 
$a_K \colon 
\mathrm{Gal}(\widehat{K}^{\mathrm{ab}}/\widehat{K}^{\mathrm{ur}}) \simeq 
\mathcal{O}_K^{\times}$. 

For $m \geq 0$, we put 
\begin{equation}\label{deltam}
 \delta_m (X_1,\ldots,X_n)
 =\widehat{\wedge \cG_0}\sum_{(m_1,\ldots ,m_n)}
 \sgn (m_1,\ldots ,m_n) 
 X_1^{q^{m_1-m}} \cdots X_n^{q^{m_n-m}} 
\end{equation}
in 
$\cO_K [[X_1^{1/q^{\infty}},\ldots, X_n^{1/q^{\infty}}]]$, 
where 
\begin{itemize}
\item 
the symbol $\widehat{\wedge \cG_0}\sum$ 
denotes the sum under the additive operation of 
$\widehat{\wedge \cG_0}$, 
\item 
we take the sum over $n$-tuples 
$(m_1,\ldots ,m_n)$ of integers 
such that 
$m_1 + \cdots + m_n =n(n-1)/2$ and 
$m_i \not\equiv m_j \mod n$ for $i \neq j$, 
\item 
$\sgn (m_1,\ldots ,m_n)$ 
is the sign of the permutation on $\bZ/n\bZ$ 
defined by 
$i \mapsto m_{i+1}$. 
\end{itemize}
We put 
\[
 \delta=\lim_{m \to \infty} \delta_m^{q^m} \in 
 \cO_{\mathbf{C}}[[X_1^{1/q^{\infty}},\ldots, X_n^{1/q^{\infty}}]]. 
\] 
For $l \geq 1$, 
we put 
\[
 \delta^{q^{-l}}=\lim_{m \to \infty} \delta_m^{q^{m-l}}. 
\]
The following theorem follows from 
\cite[(2.8)]{WeSemi} 
and the proof of \cite[Theorem 2.10.3]{BWMax} 
(\cf \cite[Theorem 6.4.1]{ScWeMpd}). 

\begin{thm}[{\cite[Theorem 1.3]{ITsimptame}}]
Let 
$\sigma \in \mathrm{Gal}
 (\widehat{K}^{\mathrm{ab}}/\widehat{K}^{\mathrm{ur}})$. 
We put 
$A^{\sigma}=A 
 \widehat{\otimes}_{\mathcal{O}_{\widehat{K}^{\mathrm{ab}}}, \sigma} 
 \mathcal{O}_{\mathbf{C}}$.
Then, we have an isomorphism 
\begin{equation}\label{Astr}
 A^{\sigma} 
 \simeq 
 \mathcal{O}_{\mathbf{C}}[[X_1^{1/q^{\infty}},\ldots,X_n^{1/q^{\infty}}]]/
 (\delta(X_1,\ldots,X_n)^{q^{-m}} - \sigma(t^{q^{-m}}))_{m \geq 0}^-. 
\end{equation}
\end{thm}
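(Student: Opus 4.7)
The plan is to reduce the assertion to the identity case $\sigma=\mathrm{id}$, which is essentially established in \cite[(2.9.2)]{WeSemi} and in the proof of \cite[Theorem 2.10.3]{BWMax}, and then push the Galois twist through the resulting presentation.

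First I would recall the construction at finite level. By \cite[Proposition 4.3]{DrEmod}, $A_m$ is a regular local ring carrying the universal triple $(\cG,\phi,\iota)$ with $\phi$ a Drinfeld level $\fp^m$-structure on the deformation $\cG$ of $\cG_0$. Choosing an $\cO_K$-basis $e_1,\ldots,e_n$ of $(\fp^{-m}/\cO_K)^n$ and writing $X_i^{(m)}=\phi(e_i)\in A_m$, these $n$ elements generate the maximal ideal and determine $\phi$. The transition map $A_m\to A_{m+1}$ is governed by $[\varpi]_{\cG}$; because $\cG$ has reduction $\cG_0$ of height $n$, this map acts on coordinates as $X\mapsto X^{q^n}$ plus higher-order corrections. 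After $I$-adic completion, these coordinates assemble into compatible systems of $q^{\infty}$-th roots, giving a natural surjection
\[
 \cO_{\wh{K}^{\rmab}}[[X_1^{1/q^{\infty}},\ldots,X_n^{1/q^{\infty}}]]\twoheadrightarrow A.
\]

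Next I would identify the kernel via the determinant morphism \eqref{mor}. Under the Lubin--Tate isomorphism $\bfM_{\wedge\cG_0,\infty}\simeq\Spf\cO_{\wh{K}^{\rmab}}$, the compatible system $\{t_m\}$ corresponds to the tautological level structure on $\wh{\wedge\cG_0}$. The polynomials $\delta_m$ of \eqref{deltam} are a Moore-type formula for the exterior power $\wedge\phi$ of a Drinfeld level structure in terms of the coordinates $X_i^{(m)}$, so the ring homomorphism $\cO_{\wh{K}^{\rmab}}\to A$ coming from \eqref{mor} sends $t_m$ to $\delta_m(X_1^{(m)},\ldots,X_n^{(m)})$. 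Raising to $q^m$-th powers and passing to the limit inside $A^{\mathrm{id}}=A\wh{\otimes}_{\cO_{\wh{K}^{\rmab}}}\cO_{\mathbf{C}}$ yields the relations $\delta^{q^{-m}}=t^{q^{-m}}$ for all $m\geq 0$. The crucial input, supplied by \cite[(2.9.2)]{WeSemi} and refined in the proof of \cite[Theorem 2.10.3]{BWMax}, is that these relations generate the entire kernel; this relies on the finite-level regularity together with a careful perfectoid estimate in the tower. Finally I would introduce the $\sigma$-twist: since $\sigma\in\Gal(\wh{K}^{\rmab}/\wh{K}^{\rmur})$ acts on $\cO_{\wh{K}^{\rmab}}$ but leaves the formal coordinates $X_i$ untouched, base-changing through $\sigma$ simply replaces each $t^{q^{-m}}$ by $\sigma(t^{q^{-m}})$, giving the asserted presentation for arbitrary $\sigma$.

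The main obstacle is step two: one needs not only that $\delta_m$ vanishes on the coordinates of $\wedge\phi$ for a level structure, which is a formal Moore-determinant calculation, but also that the resulting relations exhaust the kernel in the perfectoid limit. Both steps are nontrivial, and since the excerpt explicitly invokes \cite[(2.9.2)]{WeSemi} together with the proof of \cite[Theorem 2.10.3]{BWMax}, my plan is to quote these two references at this juncture rather than reprove them; the remaining work is only the cosmetic Galois-twist observation.
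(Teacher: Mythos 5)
Your proposal follows the same route as the paper: the paper supplies no proof of its own, instead recording that the theorem follows from \cite[(2.9.2)]{WeSemi} together with the proof of \cite[Theorem 2.10.3]{BWMax} (\cf \cite[Theorem 6.4.1]{ScWeMpd}), the $\sigma$-twist being a purely formal consequence of the untwisted presentation. Your gloss on the two cited inputs (the perfectoid limit presentation of $A$ over $\cO_{\wh{K}^{\rmab}}$ and the determinant relation $\delta^{q^{-m}}=t^{q^{-m}}$ in $A^{\mathrm{id}}$) and on the effect of base change through $A\widehat{\otimes}_{\cO_{\wh{K}^{\rmab}},\sigma}\cO_{\mathbf{C}}$ is accurate and matches the intended reading of those references.
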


For 
$\sigma \in \Gal (\widehat{K}^{\mathrm{ab}}/\widehat{K}^{\mathrm{ur}})$, 
let $\mathcal{M}_{\infty,\bar{\eta},\sigma}$ 
be the base change of $\cM_{\infty,\eta}$ 
by $\bar{\eta} \to \eta \xrightarrow{\sigma} \eta$. 
For 
$\sigma \in \Gal (\widehat{K}^{\mathrm{ab}}/\widehat{K}^{\mathrm{ur}})$ 
and 
$\alpha =a_K (\sigma) \in \mathcal{O}_K^{\times}$, 
we write $A^{\alpha}$ for $A^{\sigma}$ and 
$\mathcal{M}^{(0)}_{\infty,\bar{\eta},\alpha}$ for 
$\mathcal{M}^{(0)}_{\infty,\bar{\eta},\sigma}$. 
We put 
\begin{equation}\label{Mdec}
 \bfM_{\infty,\mathcal{O}_{\mathbf{C}}}^{(0)} 
 =\coprod_{\alpha \in \mathcal{O}_K^{\times}} 
 \Spf A^{\alpha}, \quad 
 \mathcal{M}^{(0)}_{\infty,\bar{\eta}} 
 =\coprod_{\alpha \in \mathcal{O}_K^{\times}} 
 \mathcal{M}_{\infty,\bar{\eta},\alpha}. 
\end{equation}
Then $\mathcal{M}^{(0)}_{\infty,\bar{\eta}}$ 
is the generic fiber of 
$\bfM_{\infty,\mathcal{O}_{\mathbf{C}}}^{(0)}$, 
and 
$\mathcal{M}^{(0)}_{\infty,\bar{\eta}} (\bfC) = \cM_{\infty}(\bfC)$. 

Let $+_{\wh{\cG_0}}$ and 
$+_{\wh{\wedge \cG_0}}$ be the additive operations for 
$\wh{\cG_0}$ and $\wh{\wedge \cG_0}$ respectively. 

\begin{lem}[{\cite[Lemma 1.5]{ITsimptame}}]\label{appsum}
\begin{enumerate}
\item
We have $X_1 +_{\wh{\cG_0}} X_2 \equiv X_1 + X_2$ 
modulo terms of total degree $q^n$. 
\item
We have $X_1 +_{\wh{\wedge \cG_0}} X_2 \equiv X_1 + X_2$ 
modulo terms of total degree $q$. 
\end{enumerate}
\end{lem}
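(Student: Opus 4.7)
The plan is to use the standard description of the formal group law in terms of the logarithm. For any one-dimensional formal $\cO_K$-module $\cG$ over $\cO_K$ whose logarithm $\ell_{\cG}$ is given (which is the case for both $\wh{\cG_0}$ and $\wh{\wedge\cG_0}$ by assumption), the addition is recovered as
\[
 X_1 +_{\cG} X_2 = \ell_{\cG}^{-1}\bigl(\ell_{\cG}(X_1) + \ell_{\cG}(X_2)\bigr),
\]
where $\ell_{\cG}^{-1}$ denotes the compositional inverse. So the task reduces to controlling the low-degree terms of $\ell_{\cG}$ and $\ell_{\cG}^{-1}$.

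For part (1), the given formula shows that $\ell_{\wh{\cG_0}}(X) - X$ has all terms of degree $\geq q^n$ (the first nontrivial term is $X^{q^n}/\varpi$). I first note that the compositional inverse $\ell_{\wh{\cG_0}}^{-1}(Y) - Y$ likewise has all terms of degree $\geq q^n$: writing $\ell_{\wh{\cG_0}}^{-1}(Y) = Y + \sum_{j \geq 2} c_j Y^j$ and substituting into $\ell_{\wh{\cG_0}}(\ell_{\wh{\cG_0}}^{-1}(Y))=Y$, one proves by induction on $j < q^n$ that each $c_j$ vanishes, since $\ell_{\wh{\cG_0}}$ has no terms of degree strictly between $1$ and $q^n$. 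Then writing
\[
 \ell_{\wh{\cG_0}}(X_1) + \ell_{\wh{\cG_0}}(X_2) = X_1 + X_2 + R(X_1,X_2)
\]
with $R$ of total degree $\geq q^n$, and applying $\ell_{\wh{\cG_0}}^{-1}$, the linear part of $\ell_{\wh{\cG_0}}^{-1}$ contributes $X_1 + X_2 + R$, while all higher-order terms of $\ell_{\wh{\cG_0}}^{-1}$ are of degree $\geq q^n$ and are evaluated at a series without constant term, hence contribute only terms of total degree $\geq q^n$ in $(X_1,X_2)$. This yields $X_1+_{\wh{\cG_0}}X_2 \equiv X_1+X_2$ modulo terms of total degree $q^n$.

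Part (2) is proved by the identical argument with $q^n$ replaced by $q$: the formula for $\ell_{\wh{\wedge\cG_0}}$ shows $\ell_{\wh{\wedge\cG_0}}(X) - X$ has all terms of degree $\geq q$ (the leading correction is $(-1)^{n-1}X^q/\varpi$), and the same induction yields that $\ell_{\wh{\wedge\cG_0}}^{-1}(Y)-Y$ has all terms of degree $\geq q$.

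There is no real obstacle here; the only point requiring a moment of care is the inductive step establishing that compositional inversion preserves the ``gap'' $1 < \deg < q^n$ (respectively $1 < \deg < q$) in the nonlinear terms. Once this is isolated, both statements follow immediately from the explicit shape of the two logarithms.
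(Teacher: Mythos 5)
Your argument is correct and is essentially the paper's own proof, just spelled out in full: the paper simply cites ``the descriptions of the logarithms of $\wh{\cG_0}$ and $\wh{\wedge\cG_0}$'' and leaves the routine verification (the gap in degrees of $\ell_{\cG}$, hence of $\ell_{\cG}^{-1}$, hence of the group law) to the reader. Your inductive check that compositional inversion preserves the degree gap is exactly the missing detail, and it is fine.
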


Let $\bom{X}\!_i$ be $(X_i^{q^{-j}} )_{j \geq 0}$ for $1 \leq i \leq n$. 
We write 
$\delta(\bom{X}\!_1, \ldots,\bom{X}\!_n)$ for 
the $q$-th power compatible system 
$(\delta(X_1,\ldots,X_n)^{q^{-j}} )_{j \geq 0}$. 

For $q$-th power compatible systems 
$\bom{X}=(X^{q^{-j}} )_{j \geq 0}$ and 
$\bom{Y}=(Y^{q^{-j}} )_{j \geq 0}$ that take values in $\cO_{\bfC}$, 
we define $q$-th power compatible systems 
$\bom{X} +\bom{Y}$, $\bom{X} -\bom{Y}$ and $\bom{X} \bom{Y}$ 
by the requirement that their $j$-th components for $j \geq 0$ are 
\[
 \lim_{m \to \infty} (X^{q^{-m}} +Y^{q^{-m}})^{q^{m-j}}, \quad 
 \lim_{m \to \infty} (X^{q^{-m}} -Y^{q^{-m}})^{q^{m-j}}, \quad 
 \textrm{and} \quad 
 X^{q^{-j}} Y^{q^{-j}}
\]
respectively. 
For such $\bom{X}=(X^{q^{-j}} )_{j \geq 0}$, 
we put $v(\bom{X}) = v(X)$. 
We put 
\[
 \delta'_0 (\bom{X}\!_1, \ldots,\bom{X}\!_n) 
 =\sum_{(m_1,\ldots ,m_n)}
 \sgn (m_1,\ldots ,m_n) 
 \bom{X}\!_1^{\,q^{m_1}} \cdots \bom{X}\!_n^{\,q^{m_n}} , 
\]
where 
we take the sum in the above sense and 
the index set is the same as \eqref{deltam}. 

\begin{lem}[{\cite[Lemma 1.6]{ITsimptame}}]\label{mixed}
Assume that $n \geq 2$ and 
$v(\bom{X}\!_i) \geq (n q^{i-1}(q-1))^{-1}$ for $1 \leq i \leq n$. 
Then, we have 
\[
 \delta(\bom{X}\!_1, \ldots,\bom{X}\!_n) \equiv 
 \delta'_0 (\bom{X}\!_1,\ldots,\bom{X}\!_n) \mod\!_>\, 
 \frac{1}{n}+\frac{1}{q-1}. 
\]
\end{lem}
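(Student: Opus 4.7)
The plan is to reduce the comparison of $\delta$ with $\delta'_0$ to the pairwise comparison of the formal group sum $+_{\wh{\wedge \cG_0}}$ with the ordinary sum, using Lemma \ref{appsum}.2, and then track how errors behave under the $q^m$-th power operation implicit in the definition $\delta=\lim_{m\to\infty}\delta_m^{q^m}$.

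First, I would set up the bookkeeping. Writing the summands of $\delta_m$ as $T_j^{(m)}=\sgn(m_1^{(j)},\ldots,m_n^{(j)})\,X_1^{q^{m_1^{(j)}-m}}\cdots X_n^{q^{m_n^{(j)}-m}}$, I split $\delta_m=\delta_m^{\mathrm{add}}+E_m$, where $\delta_m^{\mathrm{add}}=\sum_j T_j^{(m)}$ is the ordinary sum and $E_m$ collects the formal-group-law corrections. Evaluated at the compatible systems $\bom{X}\!_i$, the $T_j^{(m)}$ corresponding to the identity permutation $\pi=\id$ with trivial shift $k_i=0$ achieves the minimal valuation $q^{-m}/(q-1)$ under the hypothesis $v(\bom{X}\!_i)\geq(nq^{i-1}(q-1))^{-1}$; any other valid tuple $(m_1,\ldots,m_n)$ yields strictly larger valuation, and in particular the total valuation increases by at least $q^{-m}/n$ whenever $(m_1,\ldots,m_n)\ne(\pi(i)-1)_i$ for $\pi=\id$.

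Next, I would quantify $E_m$. Iterating Lemma \ref{appsum}.2 over the $\wh{\wedge\cG_0}$-summation of the $T_j^{(m)}$'s, every monomial appearing in $E_m$ has total degree at least $q$ in the $T_j^{(m)}$'s, hence valuation $\geq q\cdot q^{-m}/(q-1)=q^{1-m}/(q-1)$ after substitution of the $\bom{X}\!_i$. Then passing from $\delta_m$ to $\delta_m^{q^m}$ amplifies valuations by the factor $q^m$, so the contribution of $E_m$ to $\delta_m^{q^m}$ has valuation $\geq q/(q-1)=1+1/(q-1)$, which is $>1/n+1/(q-1)$ for all $n\geq 2$.

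It remains to compare $(\delta_m^{\mathrm{add}})^{q^m}$ with $\delta'_0$ under the compatible-system interpretation. Termwise one has $(T_j^{(m)})^{q^m}$ giving exactly the monomials $\pm\bom{X}\!_1^{\,q^{m_1^{(j)}}}\cdots\bom{X}\!_n^{\,q^{m_n^{(j)}}}$ appearing in $\delta'_0$, so the only remaining error is the mixed-term discrepancy $(\sum_j T_j^{(m)})^{q^m}-\sum_j (T_j^{(m)})^{q^m}$. This is a sum $\sum_{\bfk}\binom{q^m}{\bfk}\prod_j (T_j^{(m)})^{k_j}$ over multi-indices $\bfk\ne q^m\cdot e_j$, and a Kummer-type estimate bounds $v_p(\binom{q^m}{\bfk})$ from below while the monomial valuation is at least $q^{-m}\sum_j k_j/(q-1)=1/(q-1)$ with strict inequality whenever two distinct $T_j^{(m)}$'s appear; combining these yields the required bound $>1/n+1/(q-1)$, and the estimate passes to the limit $m\to\infty$.

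The main obstacle is the last step: verifying uniformly in $m$ and in the multi-index $\bfk$ that the combined valuation from the binomial coefficient and the product of mixed summands strictly exceeds $1/n+1/(q-1)$, especially for summands whose permutations $\pi$ differ from the identity only slightly and whose shift parameters $k_i$ contribute only small surplus.
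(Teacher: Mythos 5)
You should first note that the paper itself gives no proof here: it explicitly recalls Lemma~\ref{mixed} from \cite[Lemma 1.6]{ITsimptame}, so there is no in-paper argument to compare against. Assessing your sketch on its own terms, the overall scheme (split $\delta_m$ into the ordinary sum $\delta_m^{\mathrm{add}}$ plus a formal-group correction $E_m$ bounded via Lemma~\ref{appsum}.2, then track valuations through the $q^m$-th power) is reasonable and does lead to the stated bound. But there are two points worth fixing.

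First, a miscalculation: you claim the non-identity summands gain at least $q^{-m}/n$ in valuation over the identity summand. Writing $m_i=\pi(i)-1+nk_i$ with $\sum k_i=0$, the hypothesis gives $v(T_j^{(m)})\geq \tfrac{q^{-m}}{n(q-1)}\sum_i q^{\pi(i)-i+nk_i}$, and the minimum of $\sum_i q^{a_i}$ over integer tuples with $\sum a_i=0$ and not all $a_i=0$ is $n+\tfrac{(q-1)^2}{q}$ (attained by an adjacent transposition). The resulting surplus is $\tfrac{q^{-m}(q-1)}{nq}$, which is strictly smaller than $q^{-m}/n$. Relatedly, the sentence ``passing from $\delta_m$ to $\delta_m^{q^m}$ amplifies valuations by the factor $q^m$'' cannot be applied to the summand $E_m$ alone, since you are raising $\delta_m^{\mathrm{add}}+E_m$ to the $q^m$-th power in a ring of characteristic zero; the correct statement comes from expanding $(\delta_m^{\mathrm{add}}+E_m)^{q^m}$ and estimating each cross term.

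Second, and more importantly, the ``main obstacle'' you flag at the end is not an obstacle at all, and the worry about permutations close to the identity with small surplus is misplaced. For any multi-index $\bfk$ with $\sum_j k_j=q^m$ and at least two nonzero entries, the $p$-adic valuation of the multinomial coefficient is, by Legendre's formula, $v_p\bigl(\tbinom{q^m}{\bfk}\bigr)=\tfrac{\sum_j s_p(k_j)-1}{p-1}$, which equals the number of base-$p$ carries and is therefore a positive integer, i.e.\ $\geq 1$. Hence $v\bigl(\tbinom{q^m}{\bfk}\bigr)\geq e(K/\bQ_p)\geq 1>\tfrac{1}{n}$ for $n\geq 2$, and combined with the trivial monomial bound $\sum_j k_j\, v(T_j^{(m)})\geq \tfrac{1}{q-1}$ this already gives every mixed term valuation $>\tfrac{1}{n}+\tfrac{1}{q-1}$, uniformly in $m$ and $\bfk$. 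The same one-carry observation disposes of the $E_m$-cross-terms in the earlier step. So your argument goes through, but the step you identify as delicate is in fact the routine one; the miscalculated surplus from the monomials is never needed.
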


\subsection{Group action on the formal model}\label{gpfm}
We define a group action on the formal scheme 
$\bfM_{\infty,\mathcal{O}_{\mathbf{C}}}^{(0)}$, 
which is compatible with usual group actions on 
Lubin--Tate spaces with finite level (\cf \cite[2.11]{BWMax}). 
We put 
\[
 G=\mathit{GL}_n(K) \times D^{\times} \times W_K.
\] 
Let $G^0$ denote the kernel of the following homomorphism:
\[
 G \to \mathbb{Z} ;\ (g,d,\sigma) \mapsto v 
 \bigl( \det(g)^{-1} 
 \mathrm{Nrd}_{D/K}(d)
 \mathrm{Art}^{-1}_K(\sigma) \bigr). 
\]
Then, the formal scheme 
$\bfM_{\infty, \mathcal{O}_{\mathbf{C}}}^{(0)}$ 
admits a right action of $G^0$. 
We write down the action. 
In the sequel, we use the following notation: 
\begin{quote}
For $a \in \mu_{q^n -1} (K_n) \cup \{0\}$, 
let $a^{q^{-m}}$ denote the 
$q^m$-th root of $a$ in $\mu_{q^n -1} (K_n) \cup \{0\}$ 
for a positive integer $m$, and 
we simply write $a$ also for the   
$q$-th power compatible system $(a^{q^{-m}})_{m \geq 0}$. 
\end{quote}

For $q$-th power compatible systems 
$\bom{X}=(X^{q^{-j}} )_{j \geq 0}$ and 
$\bom{Y}=(Y^{q^{-j}} )_{j \geq 0}$ that take values in $\cO_{\bfC}$, 
we define a $q$-th power compatible system 
$\bom{X} +_{\wh{\cG_0}} \bom{Y}$ 
by the requirement that their $j$-th components for $j \geq 0$ are 
\[
 \lim_{m \to \infty} (X^{q^{-m}} +_{\wh{\cG_0}} Y^{q^{-m}})^{q^{m-j}}. 
\]
The symbol $\wh{\cG_0} \sum$ 
denotes this summation for $q$-th power compatible systems. 

First, we define 
a left action of $\mathit{GL}_n(K) \times D^{\times}$
on the ring 
\[
 B_n=\mathcal{O}_{\mathbf{C}} 
 [[X_1^{1/q^{\infty}},\ldots,X_n^{1/q^{\infty}}]]. 
\]
For 
$a=\sum_{j=l}^{\infty}a_j\varpi^{j} \in K$ with 
$l \in \mathbb{Z}$ and 
$a_j \in \mu_{q-1} (K) \cup \{0\}$, 
we set
\[
 [a] \cdot \bom{X}\!_i = 
 \wh{\cG_0} \sum_{j=l}^{\infty} a_j \bom{X}\!_i^{\,q^{jn}}. 
\]
for $1 \leq i \leq n$. 
Let $g \in \mathit{GL}_n(K)$. 
We write 
$g =(a_{i,j})_{1 \leq i,j \leq n}$.
Then, let $g$ act on the ring $B_n$
by 
\begin{equation}\label{gl}
 g^{\ast} \colon B_n \to B_n;\ \bom{X}\!_i \mapsto 
 \wh{\cG_0} \sum_{j=1}^n [a_{j,i}] \cdot \bom{X}\!_j 
 \quad \textrm{for $1 \leq i \leq n$}.
\end{equation}
Let $d \in D^{\times}$. 
We write 
$d^{-1} =\sum_{j=l}^{\infty} d_j \varphi^j \in D^{\times}$ 
with 
$l \in \mathbb{Z}$ 
and $d_j \in \mu_{q^n -1} (K_n) \cup \{0\}$. 
Then, let $d$ act on $B_n$ by 
\begin{equation}\label{divi}
 d^{\ast} \colon B_n \to B_n ;\ 
 \bom{X}\!_i \mapsto 
 \wh{\cG_0} \sum_{j=l}^{\infty } d_j \bom{X}\!_i^{\,q^{j}} 
 \quad \textrm{for $1 \leq i \leq n$}.
\end{equation}
Now, we give a right action of $G^0$ on 
$\bfM_{\infty, \mathcal{O}_{\mathbf{C}}}^{(0)}$ 
using \eqref{gl} and \eqref{divi}. 
Let $(g,d,1) \in G^0$. 
We set 
\[
 \gamma(g,d)=
 \det (g) \mathrm{Nrd}_{D/K}(d)^{-1} \in 
 \mathcal{O}_K^{\times}. 
\]
We put $\bom{t} =(t^{q^{-m}})_{m \geq 0}$. 
Let $(g,d,1)$ act on 
$\bfM_{\infty, \mathcal{O}_{\mathbf{C}}}^{(0)}$ 
by 
\[
 A^{\alpha} \to A^{\gamma(g,d)^{-1} \alpha };\ 
 \bom{X}\!_i \mapsto (g,d) \cdot \bom{X}\!_i 
 \quad \textrm{for $1 \leq i \leq n$}, 
\]
where $\alpha \in \cO_K^{\times}$. 
This is well-defined, because 
the equation 
\[
 \delta ((g,d) \cdot \bom{X}\!_1 , \ldots, (g,d) \cdot \bom{X}\!_n )=
 \Art_K (\alpha) (\bom{t}) 
\]
is equivalent to 
$\delta (\bom{X}\!_1 , \ldots, \bom{X}\!_n )=
 \Art_K (\gamma(g,d)^{-1} \alpha) (\bom{t})$. 
Let $(1,\varphi^{-n_{\sigma}},\sigma) \in G^0$ act on 
$\bfM_{\infty, \mathcal{O}_{\mathbf{C}}}^{(0)}$ by 
\[
 A^{\alpha} \to A^{a_K(\sigma)\alpha};\ \bom{X}\!_i \mapsto 
 \bom{X}\!_i, \hspace{1.0em} x \mapsto 
 \sigma (x) \quad 
 \textrm{ for $1 \leq i \leq n$ and $x \in \mathcal{O}_{\mathbf{C}}$}, 
\]
where $\alpha \in \cO_K^{\times}$. 
Thus, we have a right action of $G^0$ on 
$\bfM_{\infty,\mathcal{O}_{\mathbf{C}}}$, 
which induces a right action on 
$\mathcal{M}^{(0)}_{\infty,\bar{\eta}} (\bfC) = \cM_{\infty}(\bfC)$. 

\begin{rem}\label{Ktri}
For $a \in K^{\times}$, 
the action of $(a,a,1) \in G^{0}$ 
on $\bfM_{\infty,\mathcal{O}_{\mathbf{C}}}$ 
is trivial by the definition. 
\end{rem}

\subsection{CM points}\label{CMpts}
We recall the notion of CM points from \cite[3.1]{BWMax}. 
Let $L$ be a finite extension of $K$ of degree $n$ 
inside $\bfC$. 

\begin{defn}
A deformation $\cG$ of $\cG_0$ over $\cO_{\bfC}$ 
has CM by $L$ if 
there is an isomorphism 
$L \xrightarrow{\sim} \End (\cG) \otimes_{\cO_K} K$ 
as $K$-algebras 
such that the induced map 
$L \to \End (\Lie \cG) \otimes_{\cO_K} K \simeq \bfC$ 
coincides with the natural embedding $L \subset \bfC$. 
\end{defn}

We say that a point of $\cM_{\infty}(\bfC)$ 
has CM by $L$ if the corresponding deformation over $\cO_{\bfC}$ 
has CM by $L$. 

Let $\xi \in \cM_{\infty}(\bfC)$ be a 
point that has CM by $L$. 
Let $(\cG,\phi,\iota)$ be the triple corresponding to $\xi$. 
Then we have embeddings 
$i_{M,\xi} \colon L \to M_n (K)$ 
and 
$i_{D,\xi} \colon L \to D$ 
characterized by the commutative diagrams 
\[
 \xymatrix{
 K^n \ar@{->}^-{\phi}[r] \ar@{->}_-{i_{M,\xi} (a)}[d] & 
 V_{\fp} (\cG)  
 \ar@{->}^-{V_{\fp} (a)}[d] \\ 
 K^n \ar@{->}^-{\phi}[r] & 
 V_{\fp} (\cG) 
 }
 \quad \quad \lower20pt\hbox{\textrm{and}} \quad \quad 
 \xymatrix{
 \cG_0 \ar@{->}^-{\iota}[r] \ar@{->}_-{i_{D,\xi} (a)}[d] & 
 \cG \otimes_{\cO_{\bfC}} k^{\rmac}  
 \ar@{->}^-{a \otimes \id }[d] \\ 
 \cG_0 \ar@{->}^-{\iota}[r] & 
 \cG \otimes_{\cO_{\bfC}} k^{\rmac} 
 }
\]
in the isogeny category for $a \in L$. 
We put 
$i_{\xi}=(i_{M,\xi} ,i_{D,\xi}) \colon L \to M_n (K) \times D$. 
We put 
\[
 (\iGL_n (K) \times D^{\times})^0 =\{ (g,d) \in 
 \iGL_n (K) \times D^{\times} \mid (g,d,1) \in G^0 \}. 
\]

\begin{lem}[{\cite[Lemma 3.1.2]{BWMax}}]\label{CMtrans}
The group $(\iGL_n (K) \times D^{\times})^0$ 
acts transitively on the set of the points of 
$\cM_{\infty}(\bfC)$ 
that have CM by $L$. 
For $\xi \in \cM_{\infty}(\bfC)$ 
that has CM by $L$, 
the stabilizer of $\xi$ in $(\iGL_n (K) \times D^{\times})^0$ 
is $i_{\xi} (L^{\times})$. 
\end{lem}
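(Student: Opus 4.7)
The plan is to work in the isogeny category of formal $\cO_K$-modules over $\cO_{\bfC}$, where a point $\xi = (\cG,\phi,\iota) \in \cM_{\infty}(\bfC)$ with CM by $L$ is interpreted as a height-one formal $\cO_L$-module (over $\cO_{\bfC}$) together with compatible level and rigidification data. The foundational input is that any two such formal $\cO_L$-modules are quasi-isogenous over $\cO_{\bfC}$: by Lubin-Tate theory for $L$ (and rigidity of formal $\cO_L$-modules), height-one formal $\cO_L$-modules are classified up to quasi-isogeny by a single class, and an $L$-linear quasi-isogeny between any two of them exists.

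For transitivity, given two CM-by-$L$ points $\xi_i = (\cG_i,\phi_i,\iota_i)$ for $i=1,2$, I would choose an $L$-linear quasi-isogeny $\alpha \colon \cG_1 \to \cG_2$ and define
\[
 g = \phi_1^{-1} \circ V_{\fp}(\alpha)^{-1} \circ \phi_2 \in \iGL_n(K),\quad
 d = \iota_1^{-1} \circ (\alpha \otimes_{\cO_{\bfC}} k^{\rmac})^{-1} \circ \iota_2 \in D^{\times}.
\]
Then the action of $(g,d,1)$, as defined in Subsection \ref{gpfm}, carries $\xi_1$ to $\xi_2$. The height of the quasi-isogeny $\alpha$ computed via the Tate module must equal its height computed via the reduction to $k^{\rmac}$, which yields $v(\det g) = v(\Nrd_{D/K} d)$, so $(g,d) \in (\iGL_n(K) \times D^{\times})^0$, as required.

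For the stabilizer, the inclusion $i_{\xi}(L^{\times}) \subseteq \{(g,d) \mid \xi \cdot (g,d,1) = \xi\}$ is immediate, since multiplication by $\lambda \in L^{\times}$ is an $L$-linear self-quasi-isogeny of $\cG$ that intertwines $\phi$ and $\iota$, and by construction translates to the pair $i_{\xi}(\lambda) = (i_{M,\xi}(\lambda), i_{D,\xi}(\lambda))$. Conversely, if $(g,d,1)$ fixes $\xi$, then the transitivity construction with $\xi_1=\xi_2=\xi$ produces a self-quasi-isogeny $\alpha \in (\End(\cG) \otimes_{\cO_K} K)^{\times}$; the CM-by-$L$ assumption forces this algebra to coincide with $L$, because the endomorphism ring of a height-one formal $\cO_L$-module is $\cO_L$. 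Hence $\alpha \in L^{\times}$ and $(g,d) = i_{\xi}(\alpha)$. The main technical obstacle is the rigidity/lifting step underpinning the transitivity argument: one must ensure that an $L$-linear quasi-isogeny of special fibers lifts uniquely to a quasi-isogeny of the chosen deformations over $\cO_{\bfC}$, so that the elements $g$ and $d$ defined above come from one and the same $\alpha$. This is standard Drinfeld-type rigidity for formal $\cO_K$-modules, but it is the ingredient that makes the identification of CM-by-$L$ points with a single $(\iGL_n(K) \times D^{\times})^0$-orbit go through.
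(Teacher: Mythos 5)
The paper does not prove this lemma; it cites it directly from \cite[Lemma 3.1.2]{BWMax}. Your argument reproduces that proof essentially verbatim: quasi-isogeny of the two height-one formal $\cO_L$-modules (constructed on the special fiber and lifted via Drinfeld rigidity, as you note at the end), the pair $(g,d)$ obtained by transporting level and rigidification data through the quasi-isogeny, equality of the generic-fiber and special-fiber heights of the quasi-isogeny giving membership in $(\iGL_n(K)\times D^\times)^0$, and $\End(\cG)\otimes_{\cO_K}K=L$ for the stabilizer. This is the same approach.
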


\section{Good reduction of affinoids}\label{GoodRedAff}
\subsection{Construction of affinoids}\label{ConstAff}
We take a uniformizer $\varpi$ of $K$. 
Let $r \in \mu_{q-1} (K)$. 
We put $\varpi_r =r\varpi$. 
We take 
$\varphi_r \in \bfC$ such that 
$\varphi_r^n = \varpi_r$. 
We apply results in Section \ref{LTps} 
replacing $\varpi$ with $\varpi_r$. 
We put $L_r =K(\varphi_r)$. 
By the $\cO_K$-algebra embedding 
$\cO_{L_r} \to \cO_D$ defined by 
$\varphi_r \mapsto \varphi$, 
we view $\cG_0$ as a formal $\cO_{L_r}$-module of height $1$. 
Let $\cG_r$ be a lift of $\cG_0$ to $\cO_{\wh{L}_r^{\rmur}}$ 
as a formal $\cO_{L_r}$-module. 
We take a compatible system $\{t_{r,m}\}_{m \geq 1}$ 
in $\bfC$ such that 
\[
 t_{r,1} \neq 0, \quad 
 [\varphi_r]_{\cG_r}(t_{r,1})=0, \quad 
 [\varphi_r]_{\cG_r}(t_{r,m})=t_{r,m-1} 
\]
for $m \geq 2$. 
We put 
\[
 \varphi_{M,r} = 
 \begin{pmatrix}
 \bm{0} & I_{n-1} \\
 \varpi_r & \bm{0} \\
 \end{pmatrix}
 \in M_n (K) 
\]
and $\varphi_{D,r} =\varphi \in D$. 
For $\xi \in \cM^{(0)}_{\infty,\ol{\eta}}(\bfC)$, 
we write $(\bom{\xi}_1,\ldots,\bom{\xi}_n)$ 
for the coordinate of $\xi$ 
with respect to $(\bom{X}\!_1,\ldots,\bom{X}\!_n)$, 
where $\bom{\xi}_i =(\xi_{i}^{q^{-j}})_{j \geq 0}$ for 
$1 \leq i \leq n$. 

\begin{lem}\label{exxi}
There exists 
$\xi_r \in \cM^{(0)}_{\infty,\ol{\eta}}(\bfC)$ 
such that 
\begin{equation*}\label{xilim}
 \xi_{r,i}^{q^{-j}} =\lim_{m \to \infty}  
 t_{r,m}^{q^{m-i-j}} \in \mathcal{O}_{\mathbf{C}} 
\end{equation*}
for $1 \leq i \leq n$ and $j \geq 0$. 
Further, we have the following: 
\begin{enumerate}
\item 
$\xi_r$ has CM by $L_r$. 
\item 
We have 
$i_{\xi_{r}} (\varphi_r) = 
 (\varphi_{M,r} ,\varphi_{D,r} ) \in M_n (K) \times D$. 
\item 
$\bom{\xi}_{r,i} = \bom{\xi}_{r,i+1}^q$ 
for $1 \leq i \leq n-1$. 
\item 
$v(\xi_{r,i})=1/(n q^{i-1}(q-1))$ for $1 \leq i \leq n$. 
\end{enumerate}
\end{lem}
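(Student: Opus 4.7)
The plan is to construct $\xi_r$ as the CM point of the Lubin--Tate perfectoid space attached to the formal $\cO_{L_r}$-module $\cG_r$ equipped with a Drinfeld level structure built from the torsion tower $\{t_{r,m}\}_{m\geq 1}$, and then verify the coordinate formula \eqref{xilim} by transporting across an isomorphism with the standard model.

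\textbf{CM point construction.} I would regard $\cG_r$, base-changed along $\cO_{\wh{L}_r^{\rmur}}\hookrightarrow\cO_{\bfC}$, as a formal $\cO_K$-module of height $n$ whose reduction supplies $\iota$; the $\cO_{L_r}$-action then exhibits it as a deformation with CM by $L_r$, proving (i). The module $V_{\fp}(\cG_r)$ is free of rank one over $L_r$, with generator $v := (r^m t_{r,mn})_{m\geq 1}$ (the factor $r^m$ is forced by $\varpi = r^{-1}\varphi_r^n$ so that multiplication by $\varpi$ is the transition map). I define $\phi \colon K^n \to V_{\fp}(\cG_r)$ by $e_j\mapsto \varphi_r^{n-j}v$, so that $(\cG_r,\phi,\iota)$ is the candidate point $\xi_r$. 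In the $K$-basis $(\varphi_r^{n-1}v,\ldots,v)$ the action of $\varphi_r$ on $V_{\fp}(\cG_r)$ is the companion matrix of $x^n-\varpi_r$, i.e.\ $\varphi_{M,r}$, while the induced action on $\End\cG_0$ is $\varphi_{D,r}$ by construction of the embedding $\cO_{L_r}\hookrightarrow\cO_D$; this proves (ii).

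\textbf{Coordinate formula, valuation, and membership.} To match the coordinates of $\xi_r$ with \eqref{xilim} I would fix the unique isomorphism $\cG_r \simeq \wh{\cG}_0$ of formal $\cO_K$-modules over $\cO_{\bfC}$ lifting the identity on reductions and transport $\phi(e_i)$ across it. Iterating $[\varphi_r](X)=\varphi_r X+X^q$ gives $[\varphi_r^{n-i}](t_{r,m}) \equiv t_{r,m}^{q^{n-i}}$ modulo terms of strictly larger valuation, and after taking $q$-power roots and passing to the limit in $m$ the correction terms disappear, producing $\xi_{r,i}^{q^{-j}} = \lim_m t_{r,m}^{q^{m-i-j}}$. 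Convergence is a routine Cauchy estimate starting from $v(t_{r,m})=1/(nq^{m-1}(q-1))$ (proved inductively from $t_{r,m+1}^q - t_{r,m}=-\varphi_r t_{r,m+1}$) and applying the standard bound on $(a+b)^{q^k}-a^{q^k}$; simultaneously it yields $v(\xi_{r,i})=1/(nq^{i-1}(q-1))$, which is (iv). Membership of $\xi_r$ in $\cM^{(0)}_{\infty,\ol\eta}(\bfC)$ is then checked by applying Lemma~\ref{mixed} (whose valuation hypothesis is exactly (iv)) to evaluate $\delta(\bom{\xi}_{r,1},\ldots,\bom{\xi}_{r,n})$ and recognize it as $\sigma(\bom{t})$ for an appropriate $\sigma \in \Gal(\wh{K}^{\rmab}/\wh{K}^{\rmur})$. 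Property (iii) is then immediate from the explicit formula: the $j$-th component of $\bom{\xi}_{r,i+1}^{q}$ is $\xi_{r,i+1}^{q\cdot q^{-j}} = \lim_m t_{r,m}^{q^{m-i-j}}$, which is the $j$-th component of $\bom{\xi}_{r,i}$.

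\textbf{Main obstacle.} The delicate step is the coordinate identification: one must pin down the isomorphism $\cG_r \simeq \wh{\cG}_0$ over $\cO_{\bfC}$ and verify that the nonlinearity of $[\varphi_r]_{\cG_r}$, together with the mismatch between the logarithms of $\cG_r$ and $\wh{\cG}_0$, produces only corrections that die in the $q$-power limit. It is exactly this stabilization that promotes the naive guess $t_{r,m}^{q^{n-i}}$ into a well-defined perfectoid coordinate at $\xi_r$; there is also a mild circularity in invoking Lemma~\ref{mixed}, which must be resolved by first computing the valuations in (iv) and only then applying the lemma to pin down the component $\alpha$.
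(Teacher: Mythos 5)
Your overall plan — realize $\xi_r$ as the point of $\cM_{\infty}(\bfC)$ given by $\cG_r$ together with the level structure coming from the tower $\{t_{r,m}\}$, and then read off the properties — is the right one, and your verifications of (ii), (iii), (iv) and the role of Lemma~\ref{mixed} are sound. But the step that you yourself flag as ``the delicate step'' contains a genuine error.

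\textbf{There is no isomorphism $\cG_r \simeq \wh{\cG}_0$ over $\cO_\bfC$.} Over $\cO_\bfC$ both are deformations of the height-$n$ formal $\cO_K$-module $\cG_0$, but they are \emph{different} deformations: $\wh{\cG}_0$ has CM by the unramified extension $K_n$ (the endomorphisms $X \mapsto \zeta X$, $\zeta \in \mu_{q^n-1}$, lift to $\wh{\cG}_0$), while $\cG_r$ has CM by the totally ramified extension $L_r$. Since $K_n \not\simeq L_r$ as $K$-algebras, no isomorphism of formal $\cO_K$-modules $\cG_r \simeq \wh{\cG}_0$ over $\cO_\bfC$ lifting an identification of the reductions can exist — they are distinct closed points of the Lubin--Tate deformation space. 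So ``transporting $\phi(e_i)$ across it'' is not a step that can be performed. The paper sidesteps this entirely by the remark in Section~\ref{ConstAff}: it \emph{replaces} $\varpi$ by $\varpi_r$ and takes the reference lift of $\cG_0$ to be the reduction of $\cG_r$ itself, so that the Scholze--Weinstein coordinates $\bom{X}_i$ in \eqref{Astr} are already expressed in the $\cG_r$-trivialization and no comparison with $\wh{\cG}_0$ is needed. A secondary symptom of the same issue is your $r^m$ factor in $v=(r^m t_{r,mn})_m$: you insert it because you keep $\varpi$ as the uniformizer of the transition maps, but with the paper's convention (transition maps by $\varpi_r=\varphi_r^n$) one takes $v=(t_{r,mn})_m$ directly, and the putative formula $\xi_{r,i}=\lim_m (r^m t_{r,(m-1)n+i})^{q^{(m-1)n}} = \lim_m r^m\, t_{r,(m-1)n+i}^{q^{(m-1)n}}$ from your choice does not even converge, since $r^{q}=r$ so the prefactor $r^m$ never stabilizes. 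Once one works with $\varpi_r$-transitions and $\cG_r$ as the reference lift, the limit formula \eqref{xilim} is exactly the untilt of the level structure, and the remaining estimates in your write-up go through as stated.
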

\begin{proof}
This is proved in the same way as 
\cite[Lemma 2.2]{ITsimptame}. 
\end{proof}

We take $\xi_r$ as in Lemma \ref{exxi}. 
We can replace the choice of \eqref{tmcho} 
so that $\delta (\bom{\xi}_1,\ldots,\bom{\xi}_n) =\bom{t}$. 
Then we have 
$\xi_r \in \mathcal{M}^{(0)}_{\infty,\bar{\eta},1}$. 
Let 
$\cD_{\bfC}^{n,\mathrm{perf}}$ 
be the generic fiber of 
$\Spf \mathcal{O}_{\mathbf{C}}
 [[X_1^{1/q^{\infty}},\ldots,X_n^{1/q^{\infty}}]]$. 
We consider 
$\cM^{(0)}_{\infty,\ol{\eta},1}$ as a subspace of 
$\cD_{\bfC}^{n,\mathrm{perf}}$ 
by \eqref{Astr}. 
We put $\bom{\eta}_r =\bom{\xi}_{r,1}^{q-1}$ 
and write $\bom{\eta}_r =(\eta_r^{q^{-j}})_{j \geq 0}$. 
Note that $v(\bom{\eta}_r)=1/n$. 
We write $n=p^e n'$ with $\gcd (p,n')=1$. 
We assume that $e \geq 1$ in the sequel, 
since the case where $e=0$ is already studied in \cite{ITsimptame}. 
We put 
\[
 \varepsilon_0 = 
 \begin{cases}
 (n' +1)/2 & \textrm{if $p^e =2$,} \\ 
 0 & \textrm{if $p^e \neq 2$.} 
 \end{cases}
\]
We take $q$-th power compatible systems 
$\bom{\theta}_r =(\theta_r^{q^{-j}})_{j \geq 0}$ and 
$\bom{\lambda}_r =(\lambda_r^{q^{-j}})_{j \geq 0}$ 
in $\bfC$ satisfying 
\begin{equation}\label{defthelam}
 \bom{\theta}_r^{p^{2e}}+\bom{\eta}_r^{p^e -1} 
 (\bom{\theta}_r +1 ) =0, \quad 
 \bom{\lambda}_r^q - \bom{\eta}_r^{q -1} 
 (\bom{\lambda}_r - \bom{\theta}_r^{p^e} (\bom{\theta}_r +1 ) 
 + \varepsilon_0 \bom{\eta}_r )=0 . 
\end{equation}
Note that 
\begin{equation*}\label{valtl}
 v(\bom{\theta}_r) = \frac{p^e-1}{np^{2e}}, \quad 
 v(\bom{\lambda}_r) = \frac{1}{n} \biggl( 1-\frac{1}{qp^e} \biggr). 
\end{equation*}
We define 
$\xi'_r \in \cD_{\bfC}^{n,\mathrm{perf}}$ by 
\begin{align*}
 & \bom{\xi}'_{r,1} =\bom{\xi}_{r,1} (1+\bom{\theta}_r), \quad 
 \bom{\xi}'_{r,i+1} ={\bom{\xi}'}^{\frac{1}{q}}_{\!\!r,i} 
 \quad \textrm{for $1 \leq i \leq n-2$}, \\ 
 &\bom{\xi}'_{r,n} = 
 {\bom{\xi}'}^{\frac{1}{q}}_{\!\!r,n-1} \bigl( 
 (1+ \bom{\theta}_r)^{-n} (1 +n'\bom{\lambda}_r) 
 \bigr)^{\frac{1}{q^{n-1}}}. 
\end{align*}
\begin{prop}\label{exxi'}
There uniquely exists 
$\xi^0_r \in \cM^{(0)}_{\infty,\ol{\eta},1}$ satisfying 
\begin{align*}
 \bom{\xi}^0_{r,i} =\bom{\xi}'_{r,i} 
 \quad \textrm{for $1 \leq i \leq n-1$}, \quad 
 \bom{\xi}^0_{r,n} \equiv 
 \bom{\xi}'_{r,n} 
 \mod\!_>\, \frac{q^2 -q +1}{nq^{n-1}(q-1)} . 
\end{align*}
\end{prop}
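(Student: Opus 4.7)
\emph{Plan.} A point of $\cM^{(0)}_{\infty,\bar\eta,1}$ is a point of $\cD^{n,\mathrm{perf}}_{\bfC}$ satisfying the determinant equation $\delta(\bom{X}_1,\ldots,\bom{X}_n)=\bom{t}$ coming from \eqref{Astr}, where our choice of $t_m$ in \eqref{tmcho} is normalized so that $\xi_r$ itself lies in this fiber. With $\bom{X}_i=\bom{\xi}'_{r,i}$ prescribed for $i\le n-1$, the task reduces to solving one equation for $\bom{X}_n$ in a small disc around $\bom{\xi}'_{r,n}$. The strategy is to approximate $\delta$ by the polynomial $\delta'_0$ via Lemma \ref{mixed}, linearize the resulting equation in the tilt, and finish with a Hensel/contraction argument. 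Since $v(\bom{\xi}'_{r,i})=v(\bom{\xi}_{r,i})=1/(nq^{i-1}(q-1))$ (from \eqref{valtl} and the construction of $\bom{\xi}'_{r,i}$), Lemma \ref{mixed} applies uniformly for $\bom{X}_n$ near $\bom{\xi}'_{r,n}$, and $\delta\equiv\delta'_0$ modulo $>1/n+1/(q-1)$, which comfortably exceeds the target precision $(q^2-q+1)/(nq^{n-1}(q-1))$.

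\emph{Linearization in the tilt.} Working with $q$-power compatible systems and using the tilted identity $(A+B)^{q^k}=A^{q^k}+B^{q^k}$, the substitution $\bom{X}_n=\bom{\xi}'_{r,n}+\bom{\epsilon}$ yields
\[
 \delta'_0(\bom{\xi}'_{r,1},\ldots,\bom{\xi}'_{r,n-1},\bom{\xi}'_{r,n}+\bom{\epsilon})
 =\delta'_0(\bom{\xi}'_{r,1},\ldots,\bom{\xi}'_{r,n})
 +\sum_{j}C_j\,\bom{\epsilon}^{q^j},
\]
where each $C_j$ is an alternating Vandermonde-type sum in $\bom{\xi}'_{r,1},\ldots,\bom{\xi}'_{r,n-1}$. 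For small $\bom{\epsilon}$ the term $C_0\bom{\epsilon}$ dominates, and using the CM symmetry $\bom{\xi}_{r,i}=\bom{\xi}_{r,i+1}^q$ from Lemma \ref{exxi}(iii) one verifies that the minimum in $v(C_0)$ is attained at the shifted identity permutation $(m_1,\ldots,m_{n-1},m_n)=(1,2,\ldots,n-1,0)$, giving $v(C_0)=(n-1)q/(n(q-1))$.

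\emph{Discrepancy estimate (main step).} Writing $\bom{\xi}'_{r,i}=\bom{\xi}_{r,i}(1+\bom{\theta}_r)^{1/q^{i-1}}$ for $i\le n-1$ and $\bom{\xi}'_{r,n}=\bom{\xi}_{r,n}(1+\bom{\theta}_r)^{(1-n)/q^{n-1}}(1+n'\bom{\lambda}_r)^{1/q^{n-1}}$, I substitute into $\delta'_0$ and use $\delta'_0(\bom{\xi}_r)\equiv\bom{t}$ (Lemma \ref{mixed} at $\xi_r$) together with Lemma \ref{exxi}(iii). The discrepancy $\delta'_0(\bom{\xi}'_r)-\bom{t}$ then expands as an alternating sum of explicit polynomials in $\bom{\theta}_r$, $\bom{\lambda}_r$ and $\bom{\eta}_r=\bom{\xi}_{r,1}^{q-1}$. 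The relations \eqref{defthelam} are engineered to annihilate this expansion to the required order: the first relation $\bom{\theta}_r^{p^{2e}}+\bom{\eta}_r^{p^e-1}(\bom{\theta}_r+1)=0$ kills the leading $\bom{\theta}_r$-terms, and the second relation absorbs the next-order residue into $\bom{\lambda}_r$, with the factor $1/n'$ and the constant $\varepsilon_0$ calibrated exactly to match the surviving monomials (the case-split in $\varepsilon_0$ handling the $p^e=2$ parity anomaly coming from $y^{p^e+1}$ versus the quadratic cross-terms). The outcome is
\[
 v\bigl(\delta'_0(\bom{\xi}'_r)-\bom{t}\bigr)>v(C_0)+\frac{q^2-q+1}{nq^{n-1}(q-1)}.
\]

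\emph{Hensel conclusion and main obstacle.} With this estimate and the linearization, the map $\bom{\epsilon}\mapsto -C_0^{-1}\bigl(\delta'_0(\bom{\xi}'_r)-\bom{t}+\sum_{j\ne 0}C_j\bom{\epsilon}^{q^j}\bigr)$ is a strict contraction on the disc $\{v(\bom{\epsilon})>(q^2-q+1)/(nq^{n-1}(q-1))\}$, producing a unique fixed point $\bom{\epsilon}_0$ and hence $\bom{\xi}^0_{r,n}=\bom{\xi}'_{r,n}+\bom{\epsilon}_0$; the approximation error from Lemma \ref{mixed} is harmless since the target precision is strictly smaller. The real work is the discrepancy estimate: one must carry out the expansion of $\delta'_0(\bom{\xi}'_r)$ explicitly, isolate the surviving low-order terms after using $\delta'_0(\bom{\xi}_r)\equiv\bom{t}$, and verify that the seemingly ad hoc definitions \eqref{defthelam}---with their exponent $p^{2e}$, factor $\bom{\eta}_r^{p^e-1}$, case-split $\varepsilon_0$, and normalization by $1/n'$---are exactly what is needed to push the residue past $(q^2-q+1)/(nq^{n-1}(q-1))$. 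It is precisely this calibration that forces the peculiar form of the modification $\bom{\xi}'_r$.
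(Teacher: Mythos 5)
The overall plan — solve the determinant equation in the last coordinate by approximating $\delta$ by $\delta'_0$ via Lemma~\ref{mixed}, linearize in the tilt, and conclude by a fixed-point argument — is the same strategy the paper uses (the paper's proof simply asserts $\delta(\bom{\xi}'_r)\equiv\bom{t}\mod_>\tfrac{1}{q-1}+\tfrac{1}{n}$ and invokes Newton's method). However, your write-up has two genuine problems.

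First, the claim that the linear term $C_0\bom{\epsilon}$ ``dominates'' the expansion is not established and is in fact false in some regimes. Substituting $\bom{X}_n=\bom{\xi}'_{r,n}+\bom{\epsilon}$ into $\delta'_0$ contributes, for each $j$ with $m_n=j$, a term of valuation
$v(C_j)+q^j\,v(\bom{\epsilon})$ with $v(C_j)=\tfrac{j+(n-1-j)q}{n(q-1)}$; in particular $v(C_0)=\tfrac{(n-1)q}{n(q-1)}$ (which you compute correctly) but $v(C_{n-1})=\tfrac{n-1}{n(q-1)}$ is strictly smaller. Whether $j=0$ or $j=n-1$ realizes the minimum depends on $v(\bom{\epsilon})$: the crossover is at $v(\bom{\epsilon})=\tfrac{n-1}{n(q^{n-1}-1)}$, and the stated precision $\tfrac{q^2-q+1}{nq^{n-1}(q-1)}$ is not always on the $j=0$ side of this crossover (e.g.\ for $n=4$, $q=2$ one has $3/32<3/28$). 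So setting up the contraction map as $\bom{\epsilon}\mapsto -C_0^{-1}(\cdots)$ is not justified without a case analysis, and the precision you would extract from the fixed point is not the one in the statement.

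Second, and more importantly, the content of this proposition is the discrepancy estimate, and you do not prove it — you explicitly defer it (``the real work is the discrepancy estimate \dots it is precisely this calibration that forces the peculiar form of the modification''). That deferred computation is precisely what the relations~\eqref{defthelam} and the factor $(1+\bom{\theta}_r)^{-n}(1+n'\bom{\lambda}_r)$ in $\bom{\xi}'_{r,n}$ are designed to make work, and without carrying it out nothing is actually shown. Moreover, the form of the estimate you posit, $v(\delta'_0(\bom{\xi}'_r)-\bom{t})>v(C_0)+\tfrac{q^2-q+1}{nq^{n-1}(q-1)}$, is tautologically exactly what you would need for your $C_0$-based contraction, not an independently verified bound; it differs from the concrete estimate $v(\delta(\bom{\xi}'_r)-\bom{t})>\tfrac{1}{q-1}+\tfrac{1}{n}$ that the paper records. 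To complete the argument you must (i) actually expand $\delta'_0(\bom{\xi}'_r)$ using $\bom{\xi}_{r,i}=\bom{\xi}_{r,i+1}^q$, $\delta(\bom{\xi}_r)=\bom{t}$ and~\eqref{defthelam} and verify the resulting valuation bound, and (ii) identify, for that valuation, which index $j$ is dominant in the linearized equation, instead of assuming $j=0$ outright.
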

\begin{proof}
We have 
\[
 \delta (\bom{\xi}'_r) \equiv \bom{t} 
 \mod\!_>\, \frac{1}{q-1} + \frac{1}{n}. 
\]
Hence, we see the claim by Newton's method. 
\end{proof}

\begin{rem}
The key ingredients for the construction of $\xi^0_r$ 
are the elements 
$\bom{\theta}_r$ and $\bom{\lambda}_r$ defined by \eqref{defthelam}. 
Up to some difference of normalizations, 
these elements are analogues of 
$\beta_{\zeta}$ and $\gamma_{\zeta}$ in 
\cite[\S 2.2]{ITlgsw1}, which are generators of a field extension 
used in a construction of a Langlands parameter there. 
\end{rem}

We take 
$\xi^0_r$ as in Proposition \ref{exxi'}. 
We put $\bom{x}_i =\bom{X}\!_i /\bom{\xi}^0_{r,i}$ for 
$1 \leq i \leq n$. 
We define 
$\cX_r \subset \cM^{(0)}_{\infty,\ol{\eta},1}$ by 
\begin{equation}\label{defcX}
\begin{split}
 &v \biggl( \frac{\bom{x}_i}{\bom{x}_{i+1}} -
 \Bigl( \frac{\bom{x}_{n-1}}{\bom{x}_n} \Bigr)^{q^{n-1-i}} 
 \biggr) 
 \geq \frac{1}{2nq^i} \quad 
 \textrm{for $1 \leq i \leq n-2$}, \\ 
 &v(\bom{x}_i -1) \geq \frac{1}{nq^{n-1}(p^e +1)} \quad 
 \textrm{for $n-1 \leq i \leq n$}. 
\end{split}
\end{equation} 
The definition 
of $\cX_r$ is independent of 
the choice of $\bom{\theta}_r$ and $\bom{\lambda}_r$. 
We define 
$\cB_r \subset \cD_{\bfC}^{n,\mathrm{perf}}$ 
by the same condition \eqref{defcX}.

\subsection{Formal models of affinoids}\label{RedAff}
Let $(\bom{X}\!_1,\ldots,\bom{X}\!_n)$ be the coordinate of $\cB_r$. 
We put 
$h (\bom{X}\!_1 ,\ldots ,\bom{X}\!_n )=\prod_{i=1}^n \bom{X}\!_i^{\,q^{i-1}}$. 
Further, we put 
\begin{align}
 f(\bom{X}\!_1, \ldots, \bom{X}\!_n) &=1-\frac{\delta (\bom{X}\!_1, \ldots, \bom{X}\!_n)}{h (\bom{X}\!_1 ,\ldots ,\bom{X}\!_n )}, \label{fdef} \\ 
 f_0 (\bom{X}\!_1, \ldots, \bom{X}\!_n) &= 
 \sum_{i=1}^{n-1}
 \biggr( \frac{\bom{X}\!_i}{\bom{X}\!_{i+1}} \biggr)^{q^{i-1}(q-1)}
 +\biggl( \frac{\bom{X}\!_n^{\,q^n}}{\bom{X}\!_1} \biggr)^{\frac{q-1}{q}} \label{f0def}. 
\end{align}
We simply write $f(\bom{X})$ for $f(\bom{X}\!_1, \ldots, \bom{X}\!_n)$, and 
$f(\bom{\xi}_r)$ for $f(\bom{\xi}_{r,1}, \ldots, \bom{\xi}_{r,n})$. 
We will use the similar notations also for other functions. 
We put 
\begin{equation}\label{Sdef}
 \bom{S} =f_0(\bom{X}) -f_0(\bom{\xi}^0_r ). 
\end{equation}

\begin{lem}\label{fZapp}
We have 
\[
 f(\bom{X}) \equiv f_0 (\bom{X}) \mod\!_>\, \frac{q-1}{nq} 
 \quad \textrm{and} \quad 
 \bom{S} \equiv f(\bom{X}) -f(\bom{\xi}^0_r ) \mod\!_>\, \frac{1}{n}. 
 \]
\end{lem}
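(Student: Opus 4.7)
The plan is to apply Lemma~\ref{mixed} to replace $\delta(\bom{X})$ by the monomial sum $\delta'_0(\bom{X})$, then to identify which monomials in $\delta'_0(\bom{X})/h(\bom{X})$ reproduce $1-f_0(\bom{X})$ and to control the remaining ``other'' monomials.

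For claim (1), the hypothesis $v(\bom{X}_i)\ge 1/(nq^{i-1}(q-1))$ of Lemma~\ref{mixed} is satisfied on $\cB_r$, giving $\delta(\bom{X})\equiv \delta'_0(\bom{X})\mod\!_{>}\,\tfrac{1}{q-1}+\tfrac{1}{n}$. Since $v(h(\bom{X}))=1/(q-1)$, dividing yields $f(\bom{X})\equiv 1-\delta'_0(\bom{X})/h(\bom{X})\mod\!_{>}\,1/n$, already finer than the modulus $(q-1)/(nq)$ required. Using
\[
 v\!\left(\bom{X}_1^{q^{m_1}}\cdots\bom{X}_n^{q^{m_n}}/h(\bom{X})\right)
 =\frac{1}{n(q-1)}\Bigl(\textstyle\sum_{i=1}^n q^{m_i-i+1}-n\Bigr)
\]
and the identity $\sum(m_i-i+1)=0$, a direct case analysis shows that the minimum value $(q-1)/(nq)$ is attained exactly by the identity tuple (contributing $1$), the $n-1$ adjacent transposition tuples (contributing, after division by $h$ and sign matching, the summands $(\bom{X}_i/\bom{X}_{i+1})^{q^{i-1}(q-1)}$ of $f_0$), and the cyclic tuple $(-1,1,2,\ldots,n-2,n)$ (contributing $(\bom{X}_n^{q^n}/\bom{X}_1)^{(q-1)/q}$); every other admissible tuple gives strictly larger valuation. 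Claim (1) follows.

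For claim (2), let $E(\bom{X})=f(\bom{X})-f_0(\bom{X})$. The previous step gives $E(\bom{X})\equiv -\sum_{\text{other}}\pm\prod_i \bom{X}_i^{q^{m_i}-q^{i-1}}\mod\!_{>}\,1/n$, so claim (2) reduces to $E(\bom{X})-E(\bom{\xi}^0_r)\equiv 0\mod\!_{>}\,1/n$ on $\cX_r$. For each non-principal summand, the difference between its values at $\bom{X}$ and $\bom{\xi}^0_r$ factors as
\[
 \prod_i (\bom{\xi}^0_{r,i})^{q^{m_i}-q^{i-1}}\cdot\Bigl(\prod_i \bom{x}_i^{q^{m_i}-q^{i-1}}-1\Bigr),\qquad \bom{x}_i=\bom{X}_i/\bom{\xi}^0_{r,i}.
\]
The classification from claim (1) shows the first factor has valuation at least $2(q-1)/(nq)\ge 1/n$ (using $q\ge 2$), and the defining conditions~\eqref{defcX} combined with the relations $\bom{\xi}^0_{r,i}=(\bom{\xi}^0_{r,i+1})^q$ for $1\le i\le n-2$ from Proposition~\ref{exxi'} force the second factor to have strictly positive valuation on $\cX_r$. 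Hence each summand has valuation $>1/n$, and the ultrametric inequality yields claim (2).

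The main obstacle is the last step. For each non-principal exponent pattern $(q^{m_i}-q^{i-1})_i$, one has to convert the precisions in~\eqref{defcX}---namely $1/(2nq^i)$ on $\bom{x}_i/\bom{x}_{i+1}-(\bom{x}_{n-1}/\bom{x}_n)^{q^{n-1-i}}$ and $1/(nq^{n-1}(p^e+1))$ on $\bom{x}_{n-1}-1$ and $\bom{x}_n-1$---into an explicit strict lower bound on $v\bigl(\prod_i \bom{x}_i^{q^{m_i}-q^{i-1}}-1\bigr)$, while keeping track of the $q$-th power compatible system formalism. The borderline case $q=2$ is sharpest, since there the first factor can equal $1/n$ exactly and the second factor must provide the entire strict margin.
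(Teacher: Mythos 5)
Your plan follows essentially the same route as the paper: apply Lemma~\ref{mixed} to pass to $\delta'_0/h$, identify the admissible exponent tuples of minimal valuation $(q-1)/(nq)$ with the summands of $f_0$, and then control the remaining terms via a factorization through a fixed coefficient at $\bom{\xi}^0_r$ times $\prod_i\bom{x}_i^{q^{m_i}-q^{i-1}}-1$. The paper packages the second step differently: it writes down explicitly the finite sum $f_1(\bom{X})$ of \emph{all} admissible monomials of second-smallest valuation $2(q-1)/(nq)$ (pairs of disjoint adjacent swaps, and the wrap-around term paired with an interior swap), proves $f(\bom{X})\equiv f_0(\bom{X})-f_1(\bom{X})\mod\!_{>}\,1/n$, and then derives both claims directly from $v(f_1(\bom{X}))\geq 2(q-1)/(nq)$ and $v(f_1(\bom{X})-f_1(\bom{\xi}^0_r))>2(q-1)/(nq)$. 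Your generic-term version is equivalent in substance.

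One concrete remark: the ``main obstacle'' you flag at the end is not actually an obstacle. You do not need an explicit quantitative lower bound on $v\bigl(\prod_i\bom{x}_i^{q^{m_i}-q^{i-1}}-1\bigr)$; you only need it to be strictly positive, and that is immediate from \eqref{defcX}: the last two inequalities give $v(\bom{x}_{n-1}-1),\,v(\bom{x}_n-1)\geq 1/(nq^{n-1}(p^e+1))>0$, and the first set of inequalities then propagates $v(\bom{x}_i/\bom{x}_{i+1}-1)>0$, hence $v(\bom{x}_i-1)>0$, for all $i$. Combined with the coefficient bound $v\geq 2(q-1)/(nq)\geq 1/n$ this gives total valuation strictly greater than $1/n$ for each non-principal summand, which is exactly what claim (2) needs; the $q=2$ borderline is handled identically. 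The one step you leave genuinely implicit (and the paper also only asserts) is that the second-smallest admissible valuation is in fact $2(q-1)/(nq)$, i.e.\ no admissible tuple falls strictly between $(q-1)/(nq)$ and $2(q-1)/(nq)$; your claim (1) case analysis establishes only the minimum, so for claim (2) you should record the sharper convexity fact $\sum_i q^{a_i}-n\geq 2(q+q^{-1}-2)$ whenever $\sum a_i=0$ and the $a_i$ are not of adjacent-swap type.
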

\begin{proof}
We put 
\[
 f_1 (\bom{X}\!_1, \ldots, \bom{X}\!_n) =\sum_{i=1}^{n-3}
 \sum_{j=i+2}^{n-1}
 \biggl( \frac{\bom{X}\!_i}{\bom{X}\!_{i+1}} \biggr)^{q^{i-1}(q-1)}
 \biggl( \frac{\bom{X}\!_j}{\bom{X}\!_{j+1}} \biggr)^{q^{j-1}(q-1)}
 +\biggl( \frac{\bom{X}\!_n^{\,q^n}}{\bom{X}\!_1} \biggr)^{\frac{q-1}{q}} 
 \sum_{i=1}^{n-3}
 \biggl( \frac{\bom{X}\!_{i+1}}{\bom{X}\!_{i+2}} \biggr)^{q^i(q-1)}. 
\]
We note that 
$(\bom{X}\!_1,\ldots,\bom{X}\!_n)$ satisfies the assumption of 
Lemma \ref{mixed} by the definition of 
$\cB_r$ and Lemma \ref{exxi} (4). 
Then we see that 
\[
 f(\bom{X}) \equiv 
 1-\frac{\delta_0' (\bom{X})}{h (\bom{X})} \equiv 
 f_0 (\bom{X}) -f_1 (\bom{X}) \mod\!_>\, \frac{1}{n}
\]
using Lemma \ref{mixed} and the definition of $\delta_0'$. 
The claims follow from this, 
because 
\[
 v(f_1 (\bom{X})) \geq \frac{2(q-1)}{nq} \quad \textrm{and} \quad 
 v \bigl( f_1 (\bom{X}) -f_1 (\bom{\xi}^0_r ) \bigr) > \frac{2(q-1)}{nq} 
\]
hold. 
\end{proof}
We put 
$\bom{s}_i =(\bom{x}_i /\bom{x}_{i+1} )^{q^i (q-1)}$ 
for $1 \leq i \leq n-1$, and 
\begin{equation}\label{Yidef}
\begin{split}
 &\bom{s}_i \bom{s}_{n-1}^{-1} 
 =1 +\bom{Y}\!_i \quad 
 \textrm{for $1 \leq i \leq n-2$,} \quad 
 \bom{s}_{n-1}  
 =1 +\bom{Y}\!_{n-1}.  
\end{split}
\end{equation}
We put $m =\gcd (e,f)$ 
and 
\begin{equation}\label{zdef}
 \bom{z} = 
 \sum_{i=0}^{\frac{e}{m} -1} 
 \biggl( \frac{\bom{\theta}_r^{p^e} \bom{Y}\!_{n-1}}{\bom{\eta}_r} 
 \biggr)^{p^{im}} 
 -\frac{1}{n'} \sum_{i=0}^{\frac{f}{m} -1} 
 \biggl( \frac{\bom{S}}{\bom{\eta}_r} \biggr)^{p^{im}} .
\end{equation}

We put $f=m_0$ and $e=m_1$. 
We define $m_2, \ldots , m_{N+1}$ by the Euclidean algorithm as follows: 
We have 
\begin{align*}
 &m_{i-1} =n_i m_i +m_{i+1} \quad 
 \textrm{with} \ \ n_i \geq 0 \ \ \textrm{and} \ \ 
 0 \leq m_{i+1} < m_i 
 \quad \textrm{for} \ \ 1 \leq i \leq N, \\ 
 &m_N =m, \quad m_{N+1} =0. 
\end{align*}
We put 
\begin{equation}\label{T01}
 \bom{T}\!_0 = \frac{\bom{\theta}_r^{p^e} \bom{Y}\!_{n-1}}{\bom{\eta}_r}, 
 \quad 
 \bom{T}\!_1 = \frac{-\bom{S}}{n' \bom{\eta}_r}  
\end{equation}
and define $\bom{T}\!_2 ,\ldots , \bom{T}\!_N$ by 
\[
 \bom{T}\!_{i+1} = \bom{T}\!_{i-1} + \sum_{j=0}^{n_i -1} 
 \bom{T}\!_i^{\,p^{jm_i +m_{i+1}}} \quad 
 \textrm{for} \ 1 \leq i \leq N-1. 
\]
Then we see that 
\[
 \bom{z} = 
 \sum_{j=0}^{\frac{m_{i+1}}{m}-1} \bom{T}\!_i^{\,p^{jm}} + 
 \sum_{j=0}^{\frac{m_i}{m}-1} \bom{T}\!_{i+1}^{\,p^{jm}} 
 \quad \textrm{for} \ 1 \leq i \leq N-1 
\]
inductively by \eqref{zdef}. 
We see also that 
\begin{equation}\label{T0N}
 (-1)^{N -i} \bom{T}\!_i = 
 \sum_{j=0}^{\frac{m_i}{m} -1} \bom{T}\!_N^{\,p^{jm}} 
 + P_i (\bom{z}) 
\end{equation}
with some $P_i(x) \in \bZ [x]$ for $0 \leq i \leq N-1$. 
We put 
\begin{equation}\label{Ydef}
 \bom{Y}=\frac{(-1)^N \bom{\eta}_r}{\bom{\theta}_r^{p^e}} 
 \bom{T}\!_N^{\,p^{f-m}}. 
\end{equation}
Then we have 
\begin{equation}\label{YYn-1}
 \bom{Y} \equiv \bom{Y}\!_{n-1} \mod\!_>\, \frac{1}{n(p^e +1)} 
\end{equation}
by \eqref{T0N} and \eqref{Ydef}. 
We define a subaffinoid 
$\cB_r' \subset \cB_r$ 
by $v(\bom{z}) \geq 0$. 
We choose a square root 
$\bom{\eta}_r^{1/2} =(\eta_r^{q^{-j}/2})_{j \geq 0}$ 
and a $(p^e +1)$-st root 
$\bom{\eta}_r^{1/(p^e +1)} =(\eta_r^{q^{-j}/(p^e +1)})_{j \geq 0}$ 
of $\bom{\eta}_r$ compatibly. 
We set 
\begin{equation}\label{ydef}
\begin{split}
 &\bom{Y}\!_i = \bom{\eta}_r^{1/2} \bom{y}_i 
 \quad \textrm{with} \quad \bom{y}_i =(y_i^{q^{-j}})_{j \geq 0} 
 \quad \textrm{for $1 \leq i \leq n-2$}, \\ 
 &\bom{Y} =\bom{\eta}_r^{1/(p^e +1)} \bom{y} 
 \quad \textrm{with} \quad \bom{y} =(y^{q^{-j}})_{j \geq 0} 
\end{split}
\end{equation}
on $\cB_r'$. 
Let $\cB$ be the 
generic fiber of 
$\Spf \cO_{\bfC} \langle y^{1/q^{\infty}} ,y_1^{1/q^{\infty}}, \ldots ,
 y_{n-2}^{1/q^{\infty}}, z^{1/q^{\infty}} \rangle$. 
The parameters $\bom{y}, \bom{y}_1, \ldots ,\bom{y}_{n-2}, \bom{z}$ 
give the morphism 
$\Theta \colon \cB_r' \to \cB$. 
We simply say an analytic function on $\cB$ for 
a $q$-th power compatible system of analytic functions on 
$\cB$. 
We put 
\[
  1+ \bom{\theta}'_r = 
 (1+ \bom{\theta}_r)^{-n} (1 +n'\bom{\lambda}_r) 
 \biggl( \frac{\bom{\xi}^0_n}{\bom{\xi}'_n} \biggr)^{q^{n-1}}. 
\] 

\begin{lem}\label{BBisom}
The morphism $\Theta$ is an isomorphism. 
\end{lem}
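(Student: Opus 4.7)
The plan is to produce an explicit inverse to $\Theta$ by undoing, one step at a time, the chain of substitutions \eqref{Yidef}, \eqref{T01}, \eqref{T0N}, \eqref{Ydef}, \eqref{ydef} together with the Artin--Schreier iteration that was used to define $\bom{T}\!_2, \ldots, \bom{T}\!_N$. Each of these substitutions should turn out to be algebraically invertible thanks to the valuation bounds that hold on $\cB_r'$, so assembling the individual inverses into a single morphism $\cB \to \cB_r'$ and checking that both compositions with $\Theta$ are the identity will establish the claim.

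Starting from a point of $\cB$, I would first set $\bom{Y} = \bom{\eta}_r^{1/(p^e+1)}\bom{y}$ and $\bom{Y}\!_i = \bom{\eta}_r^{1/2}\bom{y}_i$ for $1 \leq i \leq n-2$, and then recover $\bom{T}\!_N$ from $\bom{Y}$ via \eqref{Ydef} by extracting a $p^{f-m}$-th root, which is available in the perfectoid setting. Combining $\bom{T}\!_N$ with $\bom{z}$ through \eqref{T0N} for $i=0$ and $i=1$ should yield $\bom{T}\!_0$ and $\bom{T}\!_1$, from which \eqref{T01} produces $\bom{Y}\!_{n-1}$ and $\bom{S}$. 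The $\bom{s}_j$ for $1 \leq j \leq n-1$ are then determined by \eqref{Yidef}, and the ratios $\bom{x}_j/\bom{x}_{j+1}$ should be recoverable as the $q^j(q-1)$-th roots of $\bom{s}_j$: the $p$-power part of the exponent is inverted using the perfectoid structure, while the $(q-1)$-th root of $1+(\text{positive-valuation term})$ is computed by the binomial series, which converges because $(q-1)$ is prime to $p$.

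With all ratios $\bom{x}_j/\bom{x}_{j+1}$ in hand, I plan to pin down the overall scaling using $\bom{S} = f_0(\bom{X}) - f_0(\bom{\xi}^0_r)$: every term of $f_0(\bom{X})$ in \eqref{f0def} apart from the last depends only on ratios, so the mixed term $(\bom{X}\!_n^{\,q^n}/\bom{X}\!_1)^{(q-1)/q}$ can be isolated, and since $\bom{x}_n$ is close to $1$ by \eqref{defcX}, a final $(q^n-1)$-th root extracted by the binomial series will recover $\bom{X}\!_n$, and hence all $\bom{X}\!_j$. Since every substitution will have been reversed exactly as it was introduced, the two compositions $\Theta \circ \Theta^{-1}$ and $\Theta^{-1}\circ \Theta$ will be identities by construction.

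The hard part will be verifying that every inversion keeps us inside the rings of analytic functions on the affinoids involved, i.e.\ that the valuations of $\bom{Y}\!_{n-1}$, $\bom{Y}\!_i$, $\bom{S}/\bom{\eta}_r$, $\bom{\theta}_r^{p^e}\bom{Y}/\bom{\eta}_r$, and all the intermediate $\bom{T}\!_i$ remain strictly positive on $\cB_r'$, as is needed for the binomial series and the $p$-power root extractions. These estimates should follow from the defining inequalities \eqref{defcX}, the valuation formulas \eqref{valtl} for $\bom{\theta}_r$ and $\bom{\lambda}_r$, Lemma \ref{fZapp} which controls $\bom{S}$, and the extra hypothesis $v(\bom{z}) \geq 0$ that cuts $\cB_r'$ out of $\cB_r$.
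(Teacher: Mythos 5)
Your proposal is correct and follows the same strategy as the paper's proof: invert the chain of substitutions \eqref{ydef}, \eqref{Ydef}, \eqref{T0N}, \eqref{T01}, \eqref{Yidef} to recover $\bom{Y}_{n-1}$, $\bom{S}$ and the ratios $\bom{x}_i/\bom{x}_{i+1}$, and then use the relation $\bom{S}=f_0(\bom{X})-f_0(\bom{\xi}^0_r)$ together with the identity $\bom{x}_n^{q^n}/\bom{x}_1 = \bom{x}_n^{q^n-1}\prod_i(\bom{x}_i/\bom{x}_{i+1})^{-1}$ to pin down the absolute scaling $\bom{x}_n$ by a root extraction. The paper makes this last step explicit by displaying the rearranged equation for $\bom{\eta}_r^{-(q-1)}\bom{S}^q/(1+\bom{\theta}_r)^{(q-1)^2}$ isolating $\bom{x}_n^{(q-1)(q^n-1)}$; your plan to isolate the mixed term of $f_0$ and take a root convergent by the binomial series is the same computation phrased slightly less explicitly.
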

\begin{proof}
We will construct the inverse morphism of $\Theta$. 
We can write $\bom{Y}\!_{n-1}$ and $\bom{S}$ as 
analytic functions on $\cB$ 
by \eqref{T01}, \eqref{T0N}, \eqref{Ydef} and \eqref{ydef}. 
Then we can write $\bom{x}_i /\bom{x}_{i+1}$ as 
an analytic function on $\cB$ by \eqref{Yidef}. 
By \eqref{f0def} and \eqref{Sdef}, we have 
\begin{align*}
 \frac{\bom{\eta}_r^{-(q-1)} \bom{S}^q}{(1+\bom{\theta}_r)^{(q-1)^2}} 
 =
 \sum_{i=1}^{n-2} (\bom{s}_i -1 ) 
 + \frac{\bom{s}_{n-1} -1}{(1+\bom{\theta}'_r )^{q-1}} 
 + (1+\bom{\theta}'_r )^{q(q-1)} 
 \Bigl( \bom{x}_n^{(q-1)(q^n -1)} \prod_{i=1}^{n-1} 
 ( \bom{x}_i^{-1} \bom{x}_{i+1} )-1 \Bigr). 
\end{align*}
By this equation, 
we can write $\bom{x}_n$ as 
an analytic functions on $\cB$. 
Hence, we have the inverse morphism of $\Theta$. 
\end{proof}
We put 
\[
 \delta_{\cB} (y,y_1, \ldots ,y_{n-2}, z) = 
 (\delta|_{\cB'_r} )\circ \Theta^{-1}
\]
equipped with its $q^j$-th root 
$\delta_{\cB}^{q^{-j}}$ for $j \geq 0$. 
We put 
\[
 \fX_r =\Spf \cO_{\bfC} \langle y^{1/q^{\infty}}, 
 y_1^{1/q^{\infty}}, \ldots ,
 y_{n-2}^{1/q^{\infty}}, z^{1/q^{\infty}} \rangle
 /(\delta_{\cB}^{q^{-j}} -t^{q^{-j}})_{j \geq 0}^-. 
\]
Let $\ol{\fX}_r$ denote the special fiber of 
$\fX_r$. 

\begin{thm}\label{thm:red}
The formal scheme $\fX_r$ is 
a formal model of $\cX_r$, 
and $\ol{\fX}_r$ is isomorphic to 
the perfection of the affine smooth variety defined by 
\begin{equation}\label{vareq}
 z^{p^m} -z =y^{p^e +1} -\frac{1}{n'} 
 \sum_{1 \leq i \leq j \leq n-2} y_i y_j \quad 
 \textrm{in} \ \bA_{k^{\rmac}}^n. 
\end{equation}
\end{thm}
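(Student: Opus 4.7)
The plan is to rewrite the defining equation $\delta(\bom{X}_1,\ldots,\bom{X}_n) = \bom{t}$ of $\cM^{(0)}_{\infty,\bar{\eta},1}$ in the new coordinates of $\cB$ via the isomorphism $\Theta^{-1}$ of Lemma \ref{BBisom}, and to show that its leading term modulo $\fm_{\bfC}$ is a unit multiple of the Artin-Schreier equation \eqref{vareq}. The first assertion, that $\fX_r$ is a formal model of $\cX_r$, will then follow from Lemma \ref{BBisom} together with the observation that the conditions \eqref{defcX} cutting out $\cX_r \subset \cB'_r$ translate, in the new coordinates, precisely to the integrality conditions $v(\bom{y}), v(\bom{y}_i), v(\bom{z}) \geq 0$ defining the affinoid algebra of $\fX_r$. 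The second assertion, the description of the reduction, will follow by computing the reduction of $\delta_{\cB} - \bom{t}$.

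The central computation is an expansion of $\delta(\bom{X}) - \bom{t} = \delta(\bom{X}) - \delta(\bom{\xi}_r^0)$ to first nontrivial order. Using Lemma \ref{fZapp}, one has $\delta(\bom{X}) - \delta(\bom{\xi}_r^0) \equiv -h(\bom{X})\bom{S} + (h(\bom{X})-h(\bom{\xi}_r^0))(1-f_0(\bom{\xi}_r^0))$ modulo a controllable error, where $\bom{S}$ itself is expanded using \eqref{Yidef} through the identity $\bom{s}_i = (1+\bom{Y}\!_i)(1+\bom{Y}\!_{n-1})$ for $i \le n-2$, which produces both linear terms $\bom{Y}\!_i + \bom{Y}\!_{n-1}$ and quadratic cross-terms $\bom{Y}\!_i \bom{Y}\!_{n-1}$. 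The contribution of $(\bom{X}\!_n^{q^n}/\bom{X}\!_1)^{(q-1)/q}$ to $f_0$, evaluated against the deformation $\xi_r^0$ of $\xi_r$ by $\bom{\theta}_r$ and $\bom{\lambda}_r$, furnishes the remaining terms necessary for the Artin-Schreier structure to appear.

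The proof then rests on three algebraic identities. First, a direct telescoping from the definition \eqref{zdef} gives
\[
\bom{z}^{p^m} - \bom{z} = \bom{T}\!_0^{\,p^e} - \bom{T}\!_0 - \tfrac{1}{n'}\bigl((\bom{S}/\bom{\eta}_r)^q - \bom{S}/\bom{\eta}_r\bigr).
\]
Second, the Artin-Schreier equation \eqref{defthelam} for $\bom{\theta}_r$ converts $\bom{T}\!_0^{\,p^e} - \bom{T}\!_0$, using the approximation \eqref{YYn-1} and the normalization $\bom{Y} = \bom{\eta}_r^{1/(p^e+1)}\bom{y}$, into $-\bom{y}^{p^e+1}$ modulo $\fm_{\bfC}$. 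Third, the defining equation \eqref{defthelam} for $\bom{\lambda}_r$ is designed precisely to absorb the quadratic contributions from the expansion above; after the rescaling $\bom{Y}\!_i = \bom{\eta}_r^{1/2}\bom{y}_i$, the surviving quadratic piece becomes $(1/n')\sum_{1\le i \le j \le n-2}\bom{y}_i\bom{y}_j$ modulo $\fm_{\bfC}$. Combining these three gives that $\delta_{\cB}-\bom{t}$ reduces to (a unit multiple of) the Artin-Schreier polynomial \eqref{vareq}, and smoothness of the special fiber is immediate since the partial derivative in $z$ is the unit $-1$.

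The main obstacle is the bookkeeping of leading versus subleading valuations throughout the expansion, in particular ensuring that every spurious cross-term (e.g.\ $\bom{Y}\!_i \bom{Y}\!_j$ with $i, j \le n-2$, or higher powers of $\bom{\theta}_r$) has valuation strictly larger than the Artin-Schreier scale, so that only the intended monomials $\bom{y}^{p^e+1}$, $\bom{y}_i\bom{y}_j$, $\bom{z}^{p^m}$, $\bom{z}$ survive in the reduction. The chain of Euclidean-algorithm substitutions \eqref{T0N} and the choice of $\bom{T}\!_N^{\,p^{f-m}}$ in \eqref{Ydef} must be shown to be exactly what is needed for $\bom{z}$ to be an integral coordinate on $\cX_r$ and for \eqref{YYn-1} to hold; the exceptional case $p^e=2$, which triggers the correction $\varepsilon_0\bom{\eta}_r$ in \eqref{defthelam}, requires separate verification that the would-be obstruction to forming $\bom{\lambda}_r$ over $\cO_{\bfC}$ is exactly cancelled.
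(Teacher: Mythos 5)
Your overall strategy (transport the defining equation $\delta(\bom{X}) = \bom{t}$ through $\Theta^{-1}$, expand around $\bom{\xi}_r^0$, and show the leading term is a unit times the Artin--Schreier polynomial) is the paper's strategy, and the telescoping identity $\bom{z}^{p^m}-\bom{z} = (\bom{T}\!_0^{\,p^e}-\bom{T}\!_0) + (\bom{T}\!_1^{\,q}-\bom{T}\!_1)$ is indeed a compact way of organizing what the paper's chain of equations produces. But there are two genuine errors in the way you propose to execute it, and they are linked.

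First, the claim that ``\eqref{defthelam} converts $\bom{T}\!_0^{\,p^e}-\bom{T}\!_0$ \ldots\ into $-\bom{y}^{p^e+1}$ modulo $\fm_\bfC$'' cannot hold: using \eqref{valtl} and $v(\bom{Y}\!_{n-1}) = 1/(n(p^e+1))$, one computes $v(\bom{T}\!_0) = -1/(np^e(p^e+1))$ and $v(\bom{T}\!_0^{\,p^e}) = -1/(n(p^e+1))$, both negative, so $\bom{T}\!_0^{\,p^e}-\bom{T}\!_0$ is not even $\cO_\bfC$-integral and cannot be congruent modulo $\fm_\bfC$ to a valuation-zero quantity. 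What \eqref{defthelam} actually gives is $\bom{T}\!_0^{\,p^e}-\bom{T}\!_0 = -\bom{\eta}_r^{-1}\bigl((1+\bom{\theta}_r)\bom{Y}\!_{n-1}^{\,p^e} + \bom{\theta}_r^{p^e}\bom{Y}\!_{n-1}\bigr)$, a negative-valuation expression whose r\^ole is to \emph{cancel} the matching negative-valuation terms coming out of the $\bom{S}$-part. The $\bom{y}^{p^e+1}$ term and the cross-terms $\bom{y}_i\bom{y}_j$ both arise from $\bom{T}\!_1^{\,q}-\bom{T}\!_1 = -\tfrac{1}{n'}\bigl(\bom{S}^q\bom{\eta}_r^{-q} - \bom{S}\bom{\eta}_r^{-1}\bigr)$, and only after using the defining equation $\delta(\bom{X}) = \delta(\bom{\xi}_r^0)$ to rewrite $\bom{S}^q$ as $\bom{\eta}_r^{q-1}$ times an explicit polynomial in the $\bom{Y}\!_i$. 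This reformulation --- the paper's chain \eqref{XhhY}, \eqref{XZFY}, \eqref{f0q}, \eqref{S0q}, \eqref{Sq1a} --- is the heart of the proof and is entirely absent from your proposal; without it the telescoping identity is just a rearrangement and does not yet close the loop.

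Second, you assert that the conditions \eqref{defcX} ``translate precisely to the integrality conditions $v(\bom{y}), v(\bom{y}_i), v(\bom{z})\geq 0$.'' This is false for $\bom{z}$: the conditions \eqref{defcX} define the larger affinoid $\cB_r$, on which $\bom{z}$ has no a priori lower bound on its valuation (indeed $\bom{T}\!_0$ alone has negative valuation, as above). The containment $\cX_r \subset \cB'_r$, i.e.\ $v(\bom{z})\geq 0$ on $\cX_r$, is a \emph{consequence} of the equation $\delta(\bom{X})=\bom{t}$ once it has been put in the form \eqref{zyyn}, together with Lemma \ref{fZapp}; it is not a coordinate change. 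Both gaps point to the same missing step: you must derive the $q$-th power relation for $\bom{S}$ from $\delta(\bom{X})=\delta(\bom{\xi}_r^0)$ before the telescoping and the substitutions from \eqref{defthelam} can produce either the Artin--Schreier reduction or the integrality of $\bom{z}$.
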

\begin{proof}
Let $(\bom{X}\!_1,\ldots,\bom{X}\!_n)$ be the coordinate of $\cB_r$. 
By Lemma \ref{fZapp}, we have 
\begin{equation}\label{vfZ}
 v(f(\bom{X})) \geq \frac{q-1}{nq} \quad \textrm{and} 
 \quad 
 v(\bom{S} ) > \frac{q-1}{nq}. 
\end{equation}
We have 
\begin{equation}\label{hXX}
 h(\bom{X})^{q-1}= 
 \biggl( \frac{\bom{X}\!_n^{\,q^n}}{\bom{X}\!_1} \biggr)
 \prod_{i=1}^{n-1} \biggl( 
 \frac{\bom{X}\!_i}{\bom{X}\!_{i+1}} \biggr)^{q^i} . 
\end{equation}
We have 
\begin{equation}\label{XhhY}
 \biggl( \frac{\bom{X}\!_n^{\,q^n}}{\bom{X}\!_1} \biggr)^{q-1}
 = \Bigl( 
 \bom{\eta}_r(1 +\bom{\theta}_r )^{q-1} (1 +\bom{\theta}'_r )^q 
 \Bigr)^{q-1} 
 \biggl( \frac{h(\bom{X})}{h(\bom{\xi}^0_r)} 
 \biggr)^{(q-1)^2} 
 \prod_{i=1}^{n-1} \bom{s}_i^{-1}
\end{equation}
by \eqref{hXX}. 
We put 
\begin{equation}\label{Fdef}
 R(\bom{X})= 
 \frac{1-f(\bom{\xi}^0_r)}{1-f(\bom{X})} 
 - (1+\bom{S} ). 
\end{equation}
Then we have $v(R(\bom{X})) >1/n$ by 
Lemma \ref{fZapp} and \eqref{vfZ}. 
The equation 
$\delta (\bom{X}) =\delta (\bom{\xi}^0_r)$ is equivalent to 
\begin{equation}\label{XZFY}
 \biggl( \frac{\bom{X}\!_n^{\,q^n}}{\bom{X}\!_1} \biggr)^{q-1}
 = \Bigl( 
 \bom{\eta}_r (1 +\bom{\theta}_r )^{q-1} (1 +\bom{\theta}'_r )^q 
 \Bigr)^{q-1} 
 \bigl( 1 +\bom{S} +R(\bom{X} )  
 \bigr)^{(q-1)^2} 
 \prod_{i=1}^{n-1} \bom{s}_i^{-1}
\end{equation}
by \eqref{fdef}, \eqref{XhhY} and \eqref{Fdef}. 
We put 
\[
 F (\bom{X})= 
 (1 +\bom{\theta}'_r )^{q(q-1)} 
 \bigl( 1 + \bom{S} +R(\bom{X}) \bigr)^{(q-1)^2} 
 \prod_{i=1}^{n-1} \bom{s}_i^{-1} . 
\]
The equation 
\eqref{XZFY} is equivalent to 
\begin{equation}\label{f0q}
 f_0 (\bom{X})^q = 
 \bom{\eta}_r^{q-1} 
 (1+\bom{\theta}_r )^{(q-1)^2} 
 \Biggl( \sum_{i=1}^{n-2} \bom{s}_i 
 + \frac{\bom{s}_{n-1}}{(1+\bom{\theta}'_r )^{q-1}} 
 +F (\bom{X}) \Biggr) . 
\end{equation}
The equation 
\eqref{f0q} is equivalent to 
\begin{equation}\label{S0q}
 \bom{S}^q = 
 \bom{\eta}_r^{q-1} 
 (1+\bom{\theta}_r )^{(q-1)^2} 
 \Biggl( \sum_{i=1}^{n-2} (\bom{s}_i -1 )
 + \frac{\bom{s}_{n-1} -1}{(1+\bom{\theta}'_r )^{q-1}} 
 +F (\bom{X}) -F (\bom{\xi}^0_r ) \Biggr) . 
\end{equation}
We put 
\begin{align*}
 R_1 (\bom{X}) = {} 
 &(1+\bom{\theta}_r )^{(q-1)^2} 
 \Biggl( \sum_{i=1}^{n-2} (\bom{s}_i -1 )
 +\frac{\bom{s}_{n-1} -1}{(1+\bom{\theta}'_r )^{q-1}} 
 +F (\bom{X}) -F (\bom{\xi}^0_r ) \Biggr) \\ 
 &-\Biggl( \bom{S} 
 + \sum_{1 \leq i \leq j \leq n-2} \bom{Y}\!_i \bom{Y}\!_j 
 -n' \Bigl( \bom{Y}\!_{n-1}^{\,p^e +1} + 
 ( 1 +\bom{\theta}_r ) \bom{Y}\!_{n-1}^{\,p^e} 
 + \bom{\theta}_r^{p^e} \bom{Y}\!_{n-1} \Bigr) 
 \Biggr) . 
\end{align*}
Then we have $v(R_1 (\bom{X})) > 1/n$. 
The equation \eqref{S0q} is equivalent to 
\begin{equation}\label{Sq1a}
 \bom{S}^q = 
 \bom{\eta}_r^{q-1} \biggl( \bom{S} + 
 \sum_{1 \leq i \leq j \leq n-2} \bom{Y}\!_i \bom{Y}\!_j 
 -n' \Bigl( \bom{Y}\!_{n-1}^{\,p^e +1} + 
 ( 1 +\bom{\theta}_r ) \bom{Y}\!_{n-1}^{\,p^e} 
 + \bom{\theta}_r^{p^e} \bom{Y}\!_{n-1} \Bigr) 
 +R_1(\bom{X}) \biggr). 
\end{equation}
The equation \eqref{Sq1a} is equivalent to 
\begin{equation}\label{zyyn}
 \bom{z}^{p^m} -\bom{z} 
 = \bom{\eta}_r^{-1} \Biggl( \bom{Y}\!_{n-1}^{\,p^e +1} - 
 \frac{1}{n'} \sum_{1 \leq i \leq j \leq n-2} \bom{Y}\!_i \bom{Y}\!_j 
 - \frac{R_1(\bom{X})}{n'} \Biggr). 
\end{equation}

As a result, 
$\delta (\bom{X}) =\delta (\bom{\xi}_L)$ is equivalent to 
\eqref{zyyn} on $\cB_r$. 
By Lemma \ref{fZapp} and \eqref{zyyn}, 
we have $v(\bom{z}) \geq 0$ on $\cX_r$. 
This implies $\cX_r \subset \cB'_r$. 
We have the first claim by Lemma \ref{BBisom} and 
the construction of $\fX_r$. 
The second claim follows from \eqref{YYn-1} and \eqref{zyyn}. 
\end{proof}

\begin{rem}
If $n=p=2$, then the smooth compactification of 
the curve over $k$ defined by 
\eqref{vareq} is the supersingular elliptic curve, 
which appears as an irreducible component 
of a semi-stable reduction of 
a one-dimensional Lubin--Tate space in 
\cite{ITstab3} and \cite{ITreal3}. 
\end{rem}

\section{Group action on the reductions}\label{GroupRed}
\paragraph{Action of $\iGL_n$ and $D^{\times}$}
Let 
$\fI \subset M_n ( \cO_K )$ 
be the inverse image 
under the reduction map 
$M_n( \cO_K ) \to M_n(k)$ 
of the ring consisting of 
upper triangular matrices in $M_n (k)$. 

\begin{lem}\label{vgd}
Let $(g,d,1) \in G^0$. 
We take the integer $l$ such that 
$d \varphi_{D,r}^{-l} \in \cO_D^{\times}$. 
Let $(\bom{X}\!_1 ,\ldots ,\bom{X}\!_n )$ be the coordinate 
of $\cX_r$. 
Assume $v((g,d) \cdot \bom{X}\!_i) =v(\bom{X}\!_i)$ 
for $1 \leq i \leq n$ at some point of $\cX_r$. 
Then we have 
$(g,d ) \in (\varphi_{M,r},\varphi_{D,r})^l (\fI^{\times} \times \cO_D^{\times} )$. 
\end{lem}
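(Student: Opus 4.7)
The plan is to determine the matrix structure of $g$ column-by-column from the valuation-preservation hypothesis, using the explicit action formulas \eqref{gl} and \eqref{divi} together with the strict ordering of the valuations $v(\bom{X}\!_j) = 1/(nq^{j-1}(q-1))$ of the coordinates on $\cX_r$ that follows from Proposition \ref{exxi'} and the definition \eqref{defcX}.

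First, using the identification $\varphi_{D,r} = \varphi$ in $\cO_D$ and the hypothesis $d \varphi_{D,r}^{-l} \in \cO_D^\times$, I would write $d^{-1} = \sum_{k \geq -l} d_k \varphi^k$ with $d_{-l} \in \mu_{q^n-1}(K_n) \setminus \{0\}$; then \eqref{divi} gives $v(d^\ast \bom{X}\!_i) = q^{-l} v(\bom{X}\!_i)$. Writing $g = (a_{i,j})$, the formula \eqref{gl} together with the fact that the actions of $g$ and $d$ on $B_n$ commute yields
\[
 (g,d)^\ast \bom{X}\!_i
 = \wh{\cG_0} \sum_{k \geq -l} d_k (g^\ast \bom{X}\!_i)^{q^k},
 \qquad
 g^\ast \bom{X}\!_i = \wh{\cG_0} \sum_{j=1}^n [a_{j,i}] \cdot \bom{X}\!_j.
\]
The leading valuation of $[a_{j,i}] \cdot \bom{X}\!_j$ equals $q^{v(a_{j,i}) n} v(\bom{X}\!_j) = 1/(nq^{j-1-v(a_{j,i})n}(q-1))$, and since the integers $j - 1 - v(a_{j,i}) n$ for $j \in \{1,\ldots,n\}$ lie in distinct residue classes modulo $n$, these leading valuations are pairwise distinct; hence no cancellation occurs in either $\wh{\cG_0}$-sum, and
\[
 v\bigl( (g,d)^\ast \bom{X}\!_i \bigr) = q^{-l} \min_{1 \leq j \leq n} q^{v(a_{j,i}) n} v(\bom{X}\!_j).
\]

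Second, the hypothesis gives $\min_j q^{v(a_{j,i}) n} v(\bom{X}\!_j) = q^l v(\bom{X}\!_i)$ for each $i$. A single term attains this minimum precisely when $v(a_{j,i}) n = l + j - i$, which has an integer solution only at the unique $j_0 \in \{1,\ldots,n\}$ with $j_0 \equiv i - l \pmod n$, pinning down $v(a_{j_0,i}) = c_i := (j_0 - i + l)/n$. For $j \neq j_0$, the integrality of $v(a_{j,i})$ combined with $q^{v(a_{j,i}) n} v(\bom{X}\!_j) \geq q^l v(\bom{X}\!_i)$ forces $v(a_{j,i}) \geq c_i$ for $j < j_0$ and $v(a_{j,i}) \geq c_i + 1$ for $j > j_0$. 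I would then compare this column shape with that of $\varphi_{M,r}^l \cdot \fI^\times$: using $\varphi_{M,r}^n = \varpi_r I_n$ to reduce $l$ modulo $n$, the matrix $\varphi_{M,r}^l$ has a single $1$ in position $(i-l, i)$ when $i > l$ and a single $\varpi_r$ in position $(i-l+n, i)$ when $i \leq l$, so a short direct computation of $\varphi_{M,r}^l h$ for $h \in \fI^\times$ shows that column $i$ has exactly the derived shape. Invertibility follows from $v(\det g) = l$, deduced from $(g,d,1) \in G^0$ and $v(\Nrd_{D/K}(\varphi_{D,r})) = 1$. This gives $g \varphi_{M,r}^{-l} \in \fI^\times$, and combined with the given $d \varphi_{D,r}^{-l} \in \cO_D^\times$ we conclude $(g,d) \in (\varphi_{M,r}, \varphi_{D,r})^l (\fI^\times \times \cO_D^\times)$.

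The main obstacle is the non-cancellation verification: the higher-degree corrections to the $\wh{\cG_0}$-sums (controlled by Lemma \ref{appsum}), the subleading terms $d_k \bom{X}\!_i^{q^k}$ for $k > -l$ in $d^\ast \bom{X}\!_i$, and the subleading terms in each $[a_{j,i}] \cdot \bom{X}\!_j$ must all be verified not to interfere with the minimum of the distinct leading valuations. This reduces to the numerical check that the gaps between these leading valuations strictly exceed the valuations of the corrections, which follows from the integer spacing of the exponents $j - 1 - v(a_{j,i}) n$.
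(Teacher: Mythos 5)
Your proof is correct and follows the same valuation-theoretic approach that the cited Lemma 3.1 of \cite{ITsimptame} uses (the paper defers to that reference). The column-by-column analysis using \eqref{gl}, \eqref{divi}, the distinctness of the leading valuations $q^{v(a_{j,i})n}v(\bom{X}\!_j)$, and the shape comparison with $\varphi_{M,r}^l\fI^\times$ is exactly the right skeleton. One remark on the final paragraph: the non-cancellation claim does not actually require the corrections to be smaller than the gaps between leading valuations; what is needed (and what holds) is simply that all correction terms have valuation strictly greater than the \emph{minimum} leading valuation $v_0$. Since each correction coming from the $\wh{\cG_0}$-law (Lemma \ref{appsum}.1) or from higher Teichm\"uller digits has valuation $\geq q^n v_0 > v_0$, this is automatic, and the ``integer spacing'' only enters to guarantee that a unique term achieves the minimum, so the linear sum itself already has valuation $v_0$. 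Also, when you write that the $j$ attaining the minimum satisfies $v(a_{j,i})n = l+j-i$, you should note that columns with $a_{j,i}=0$ contribute nothing and hence are excluded from the minimum; this is harmless since $g$ is invertible so each column has a nonzero entry, and the derived shape indeed forces $a_{j_0,i}\neq 0$.
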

\begin{proof}
This is proved in the same way as 
\cite[Lemma 3.1]{ITsimptame}. 
\end{proof}

We put  
\begin{equation}\label{g_r}
\mathbf{g}_r=
( \varphi_{M,r},\varphi_{D,r},1 )
 \in G. 
\end{equation}
We put 
\begin{equation}\label{eq:ep1}
 \varepsilon_1 = 
 \begin{cases}
 1 & \textrm{if $p^e = 2$,} \\ 
 0 & \textrm{if $p^e \neq 2$.} 
 \end{cases}
\end{equation}
For $a \in k^{\mathrm{ac}}$, 
we simply write $a$ also for the 
$q$-th power compatible system 
$(a^{q^{-j}})_{j \geq 0}$. 
\begin{prop}\label{frob}
\begin{enumerate}
\item\label{enu:actgr}
The action of $\bfg_r$ 
stabilizes $\cX_r$, and 
induces the automorphism of 
$\ol{\fX}_r$ defined by 
\begin{equation}\label{oct}
\begin{split}
 &(\bom{z},\bom{y},(\bom{y}_i)_{1 \leq i \leq n-2}) \\ 
 &\mapsto 
 \left( \bom{z} +\varepsilon_1 (\bom{y}_{n-2} +1 ), 
 \bom{y}, -\sum_{i=1}^{n-3} \bom{y}_i -2\bom{y}_{n-2} 
 +\varepsilon_1, 
 ( \bom{y}_{i-1} - \bom{y}_{n-2} +\varepsilon_1 
 )_{2 \leq i \leq n-2} \right) . 
\end{split}
\end{equation}
\item\label{enu:detgr}
Assume $p^e \neq 2$. 
Let $g_r \in 
\mathit{GL}_{n-1}(k)$ 
be the matrix corresponding to the action of 
$\bfg_r$ on $(\bom{y},(\bom{y}_i)_{1 \leq i \leq n-2})$ 
in \eqref{oct}. 
Then, $\det (g_r) =(-1)^{n-1}$. 
\end{enumerate}
\end{prop}
\begin{proof}
By \eqref{gl} and \eqref{divi}, 
we have 
\begin{equation}\label{grX} 
 \bfg_r^{\ast} \bom{X}_1 = 
 \bom{X}_n^{q^{n-1}}, \quad 
 \bfg_r^{\ast} \bom{X}_i =
 \bom{X}^{\frac{1}{q}}_{i-1} 
 \quad \textrm{for $2 \leq i \leq n$.}
\end{equation}
By \eqref{grX}, we have $\bfg_r^{\ast} (h(\bom{X})) =h(\bom{X})$. 
Hence, we have 
\begin{equation}\label{grS}
 \bfg_r^{\ast} \bom{S} \equiv \bom{S} \mod\!_{>}\, \frac{1}{n}
\end{equation}
by \eqref{fdef}, \eqref{Sdef} and Lemma \ref{fZapp}. 
By \eqref{XZFY} and \eqref{grX}, 
we have 
\begin{equation}\label{grs1}
 \bfg_r^{\ast} \bom{s}_1 \equiv 
 \prod_{i=1}^{n-1} \bom{s}_i^{-1} \mod\!_{>}\, \frac{1}{2n}. 
\end{equation}
We have also 
\begin{equation}\label{grsi}
\begin{split}
 \bfg_r^{\ast} \bom{s}_i = 
 \bom{s}_{i-1} \quad 
 \textrm{for $2 \leq i \leq n-2$,} \quad 
 \bfg_r^{\ast} \bom{s}_{n-1} = 
 \bom{s}_{n-2} (1+\bom{\theta}'_r )^{1-q} 
\end{split}
\end{equation}
by \eqref{grX}. 
We have 
\begin{equation}\label{grY1}
 \bfg_r^{\ast} \bom{Y}_1 \equiv 
 (1+\bom{\theta}_r )^n 
 (1+\bom{Y}_{n-2} )^{-2} 
 \prod_{i=1}^{n-3} (1+\bom{Y}_i )^{-1} 
 -1 
 \mod\!_{>}\, \frac{1}{2n} 
\end{equation}
by \eqref{Yidef}, \eqref{grs1} and \eqref{grsi}. 
We have also 
\begin{equation}\label{grYi}
\begin{split}
 \bfg_r^{\ast} \bom{Y}_i 
 &\equiv 
 (1+\bom{\theta}_r )^n 
 (1+\bom{Y}_{i-1} )
 (1+\bom{Y}_{n-2} )^{-1} -1 
 \mod\!_{>}\, \frac{1}{2n} \quad 
 \textrm{for $2 \leq i \leq n-2$,}\\ 
 \bfg_r^{\ast} \bom{Y}_{n-1} 
 &\equiv 
 (1+\bom{\theta}_r )^{-n} 
 (1+\bom{Y}_{n-2}) 
 (1+\bom{Y}_{n-1} ) -1 
 \mod\!_{>}\, \frac{1}{p^e n} 
\end{split}
\end{equation}
by \eqref{grsi}. 
The claim follows from \eqref{grS}, \eqref{grY1} and 
\eqref{grYi}. 
\end{proof}
 
Let $\mathfrak{P}$ be the Jacobson radical of 
the order $\mathfrak{I}$, and 
$\mathfrak{p}_D$ be the maximal ideal of 
$\mathcal{O}_D$. 
We put 
\[
 U_{\mathfrak{I}}^1=1+\mathfrak{P}, 
 \quad 
 U_D ^1 =1+\mathfrak{p}_D 
\] 
and
\[
 (U_{\mathfrak{I}}^1 \times U_D^1 )^1 = 
 \{ (g,d) \in U_{\mathfrak{I}}^1 \times U_D^1 \mid 
 \det (g)^{-1} \Nrd_{D/K} (d) =1 \}.  
\]
Let $\pr_{\cO_K/k} \colon \cO_K \to k$ 
be the reduction map. 
We put 
\[
 h_r (g,d) =\frac{1}{n'} 
 (\Tr_{k/\bF_{p^m}} \circ \pr_{\cO_K/k} ) 
 \Bigl( \Trd_{D/K}( \varphi_{D,r}^{-1}(d-1)) 
 -\tr (\varphi_{M,r}^{-1}(g-1)) 
 \Bigr) 
\]
for $(g,d) \in U_{\fI}^1 \times U_D^1$.

\begin{prop}\label{gda}
The stabilizer of $\cX_r$ in 
$\iGL_n (K)\times D^{\times}$ is 
$i_{\xi_r} (L_r^{\times}) \cdot (U_{\mathfrak{I}}^1 \times U_D^1 )^1$. 
Further, 
$(g,d) \in (U_{\mathfrak{I}}^1 \times U_D^1 )^1$ 
induces the automorphism of 
$\ol{\fX}_r$ defined by 
\[
 (\bom{z},\bom{y},(\bom{y}_i)_{1 \leq i \leq n-2}) \mapsto
 (\bom{z}+h_r (g,d) ,\bom{y}, (\bom{y}_i)_{1 \leq i \leq n-2}). 
\]
\end{prop}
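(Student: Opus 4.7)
\noindent The argument naturally splits into two parts: identifying the stabilizer as a subgroup, and then computing its induced action on $\ol{\fX_r}$.

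For the stabilizer, I begin from Lemma~\ref{vgd}: any stabilizing $(g,d)$ satisfies $(g,d) \in (\varphi_{M,r}, \varphi_{D,r})^l (\fI^\times \times \cO_D^\times)$ for some integer $l$. Since Lemma~\ref{exxi}~(ii) identifies $(\varphi_{M,r}, \varphi_{D,r})$ with $i_{\xi_r}(\varphi_r)$, multiplying by $i_{\xi_r}(\varphi_r^{-l}) \in i_{\xi_r}(L_r^\times)$ reduces one to the case $(g,d) \in \fI^\times \times \cO_D^\times$. Reading off the leading behavior of the action on $\bom{x}_i = \bom{X}_i / \bom{\xi}^0_{r,i}$ from the formulas \eqref{gl} and \eqref{divi}, the image of $(g,d)$ in $(\fI^\times/U_\fI^1) \times (\cO_D^\times / U_D^1)$ must be of the form $i_{\xi_r}(\zeta)$ for some $\zeta \in \mu_{q-1}(K) \subset \cO_{L_r}^\times$; otherwise a non-trivial scaling on the $\bom{x}_i$ would violate the inequalities \eqref{defcX} defining $\cX_r$. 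Absorbing $\zeta$ into the $i_{\xi_r}(L_r^\times)$ factor leaves $(g,d) \in U_\fI^1 \times U_D^1$. The residual condition that $(g,d,1)$ preserve the connected component of $\cM^{(0)}_{\infty,\bar{\eta}}$ containing $\cX_r$ forces $\gamma(g,d) = 1$, yielding the defining condition of $(U_\fI^1 \times U_D^1)^1$.

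For the action formula, fix $(g,d) \in (U_\fI^1 \times U_D^1)^1$ and set $g_0 = \varphi_{M,r}^{-1}(g-1) \in \fI$ and $d_0 = \varphi_{D,r}^{-1}(d-1) \in \cO_D$. Because the ratios $\bom{s}_i = (\bom{x}_i/\bom{x}_{i+1})^{q^i(q-1)}$ are invariant to first order under a unipotent action, and the coordinates $\bom{y}_i$ (for $1 \leq i \leq n-2$) and $\bom{y}$ are essentially determined by these ratios via \eqref{Yidef}, \eqref{Ydef} and \eqref{ydef}, their reductions are fixed by $(g,d)$. The non-trivial first-order effect concentrates in $\bom{S}$: a direct expansion of $\delta(\bom{X})/h(\bom{X})$ via \eqref{gl}, \eqref{divi} and Lemma~\ref{fZapp} shows that the first-order change of $f_0(\bom{X})$, hence of $\bom{S}$, is a linear functional in the entries of $g_0$ and $d_0$ which collapses to $\Trd_{D/K}(d_0) - \tr(g_0)$. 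The Artin--Schreier relation \eqref{zyyn} together with the $p^m$-averaging built into the definition \eqref{zdef} of $\bom{z}$ then translates this into an additive shift of $\bom{z}$ by $h_r(g,d)$, with $\Tr_{k/\bF_{p^m}} \circ \pr_{\cO_K/k}$ arising naturally from reduction to the special fiber and from collecting $p^m$-th powers in \eqref{zdef}.

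The principal technical obstacle is the explicit matching of the first-order shift with the trace expression $h_r(g,d)$. It requires tracking first-order contributions through the changes of variable \eqref{Yidef}, \eqref{T01}--\eqref{Ydef}, \eqref{ydef} and through \eqref{zyyn}, and then identifying the resulting coefficients with $\tr(\varphi_{M,r}^{-1}(g-1))$ and $\Trd_{D/K}(\varphi_{D,r}^{-1}(d-1))$. The key structural point is that $\varphi_{M,r}^{-1}$ reduces to the inverse cyclic shift on the standard basis, so $\tr(\varphi_{M,r}^{-1}(g-1))$ aggregates precisely those entries of $g-1$ situated directly below the diagonal of $\varphi_{M,r}$, which are exactly the entries driving the first-order expansion of $f_0(\bom{X})$; the analogous description via the $K_n$-embedding into $D$ yields the $\Trd$ term, and the constraint $\det(g) = \Nrd(d)$ imposed by $(U_\fI^1 \times U_D^1)^1$ accounts for the minus sign in the definition of $h_r$.
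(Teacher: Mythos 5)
Your argument follows the paper's route essentially step for step: Lemma~\ref{vgd} and Proposition~\ref{frob} reduce to $\fI^{\times}\times\cO_D^{\times}$, the leading terms of \eqref{gl} and \eqref{divi} (the paper's displays \eqref{x0}, \eqref{xx2}) pin down the image modulo $U_{\fI}^1\times U_D^1$, and then the first-order perturbation of $f_0$, hence of $\bom{S}$ (the paper's $\Delta_g$, $\Delta_d$ in \eqref{gf0}, \eqref{kou}, \eqref{gss}), pushed through the Frobenius averaging in \eqref{zdef}, gives the additive shift $h_r(g,d)$ on $\bom{z}$, while the $\bom{s}_i$ and hence $\bom{y}$, $\bom{y}_i$ are unchanged in the reduction. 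Two small corrections are worth recording. First, you only establish that the stabilizer is \emph{contained in} $i_{\xi_r}(L_r^{\times})\cdot(U_{\fI}^1\times U_D^1)^1$; the reverse inclusion, though easy, needs to be stated --- it follows from the same estimates for $(U_{\fI}^1\times U_D^1)^1$, from Remark~\ref{Ktri} for $\mu_{q-1}(K)\subset L_r^{\times}$, and from Proposition~\ref{frob}.1 for the powers of $\varphi_r$. Second, the closing heuristic that $\det(g)=\Nrd_{D/K}(d)$ ``accounts for the minus sign'' in $h_r$ is off: that constraint only keeps $(g,d,1)$ in the $\alpha=1$ component and plays no role in the sign. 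The relative minus between $\Trd_{D/K}(\varphi_{D,r}^{-1}(d-1))$ and $\tr(\varphi_{M,r}^{-1}(g-1))$ arises because the $D^{\times}$-action \eqref{divi} is written through $d^{-1}$, so that $\Delta_d/\bom{\eta}_r$ reduces to $-\Trd_{D/K}(\varphi_{D,r}^{-1}(d-1))$ while $\Delta_g/\bom{\eta}_r$ reduces to $+\tr(\varphi_{M,r}^{-1}(g-1))$; the overall flip to $\Trd - \tr$ is then supplied by the coefficient $-1/n'$ in front of the $\bom{S}$-terms in \eqref{zdef}.
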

\begin{proof}
Assume that 
$(g,d) \in \iGL_n (K)\times D^{\times}$ 
stabilizes $\mathcal{X}_r$. 
Then we have $\det (g)=\Nrd_{D/K} (d)$. 
We will show that 
\[
 (g,d) \in i_{\xi_r} (L_r^{\times}) 
 \cdot (U_{\mathfrak{I}}^1 \times U_D^1 )^1. 
\]
We have 
$(g,d ) \in (\varphi_{M,r},\varphi_{D,r})^l 
 (\fI^{\times} \times \cO_D^{\times} )$ 
for some integer $l$ by Lemma \ref{vgd}, 
since $(g,d)$ stabilizes $\mathcal{X}_r$ 
and we have 
$v(\bom{X}\!_i) =1/(n q^{i-1}(q-1))$ 
for $1 \leq i \leq n$ at any point of $\cX_r$ 
by Lemma \ref{exxi} (4) and \eqref{defcX}. 
Further, we may assume that 
$(g,d) \in \fI^{\times} \times \cO_D^{\times}$, 
since we already know that 
$(\varphi_{M,r},\varphi_{D,r})$ stabilizes $\mathcal{X}_r$ 
by Proposition \ref{frob} (1). 

We write 
$g=(a_{i,j})_{1 \leq i,j \leq n} \in \fI$ and 
$a_{i,j}=
\sum_{l=0}^{\infty}a_{i,j}^{(l)}\varpi_r^l$ with 
$a_{i,j}^{(l)} \in \mu_{q-1} (K) \cup\{0\}$.
By \eqref{gl}, we have 
\begin{equation}\label{x0}
\begin{split}
 & g^{\ast} \bom{X}_1 \equiv 
 a^{(0)}_{1,1} \bom{X}_1+a^{(1)}_{n,1} \bom{X}_n^{q^n} 
 \mod\!_>\, \frac{q}{n(q-1)},\\  
 & g^{\ast} \bom{X}_i \equiv 
 a^{(0)}_{i,i} \bom{X}_i+a_{i-1,i}^{(0)} \bom{X}_{i-1} 
 \mod\!_>\, \frac{1}{nq^{i-2}(q-1)} \quad 
 \textrm{for $2 \leq i \leq n$}.
\end{split}
\end{equation}
We write $d^{-1}=\sum_{i=0}^{\infty}d_i \varphi_{D,r}^i$ with 
$d_i \in \mu_{q^n-1}(K_n) \cup \{0\}$. We set $\kappa(d)=d_1/d_0$. 
By \eqref{divi}, we have 
\begin{equation}\label{xx2}
 d^\ast \bom{X}_i \equiv d_0 \bom{X}_i 
 \left(1+ \kappa(d) \bom{X}_i^{q-1} \right) 
 \mod\!_>\, \frac{1}{nq^{i-2}(q-1)} \quad 
 \textrm{for $1 \leq i \leq n$}.
\end{equation}
By \eqref{defcX}, \eqref{x0} and \eqref{xx2}, 
we have 
$(g,d) \in i_{\xi_r} (\cO_K^{\times}) \cdot (U_{\mathfrak{I}}^1 \times U_D^1 )^1$. 
Conversely, 
any element of 
$i_{\xi_r} (L_r^{\times}) \cdot (U_{\mathfrak{I}}^1 \times U_D^1 )^1$ 
stabilizes $\cX_r$ 
by Remark \ref{Ktri}, Proposition \ref{frob} 
and the above arguments. 

Let $(g,d) \in \cO_K^{\times} U_{\mathfrak{I}}^1 \times \cO_D^{\times}$. 
We put 
\begin{align*}
 \Delta_g (\bom{X}) 
 = 
 \sum_{i=1}^{n-1}\frac{a_{i,i+1}^{(0)}}{a_{i+1,i+1}^{(0)}}
 \biggl( \frac{\bom{X}_i}{\bom{X}_{i+1}} \biggr)^{q^i}
 +\frac{a_{n,1}^{(1)}\bom{X}_n^{q^n}}{a_{1,1}^{(0)}\bom{X}_1}, \quad 
 \Delta_d (\bom{X})
 =\sum_{i=1}^n\kappa(d)^{q^{i-1}} \bom{X}_i^{q^{i-1}(q-1)}.
\end{align*}
Then, we acquire 
\begin{equation}\label{gf0}
 f_0 \bigl( (g,d)^\ast \bom{X} \bigr) 
 \equiv f_0 (\bom{X}) 
 +\Delta_g (\bom{X}) +\Delta_d (\bom{X}) 
 \mod\!_{>}\, \frac{1}{n} . 
\end{equation}
We have 
\begin{equation}\label{kou}
 (g,d)^\ast \bom{S} \equiv \bom{S}+\Delta_g (\bom{X}) 
 +\Delta_d (\bom{X}) \mod\!_{>}\, \frac{1}{n} 
\end{equation}
by \eqref{Sdef} and \eqref{gf0}. 
We have 
\begin{equation}\label{gss}
 (g,d)^\ast \bom{s}_i \equiv \bom{s}_i \mod\!_{\geq}\, \frac{1}{n} 
\end{equation}
for $1 \leq i \leq n-2$. 
Let $(g,d) \in (U_{\mathfrak{I}}^1 \times U_D^1 )^1$. 
We obtain 
\[
 (g,d)^\ast \bom{z}=\bom{z}+h_r (g,d) 
\] 
by \eqref{zdef}, \eqref{kou} and \eqref{gss}. 
We can compute 
the action of $(g,d)$ on $\bom{y}$ and 
$\{\bom{y}_i\}_{1 \leq i \leq n-2}$ 
by \eqref{Yidef}, \eqref{YYn-1}, \eqref{ydef} and \eqref{gss}. 
\end{proof}

\paragraph{Action of the Weil group}
We put $\varphi_r' =\varphi_r^{p^e}$ and $E_r =K(\varphi_r')$. 
Let $\sigma \in W_{E_r}$ in this paragraph. 
We put 
\[
 a_{\sigma}=\mathrm{Art}_{E_r}^{-1} (\sigma) \quad \textrm{and} \quad 
 u_{\sigma}=a_{\sigma} {\varphi_r'}^{-n_{\sigma}} 
 \in \mathcal{O}_{E_r}^{\times}. 
\]
We take 
\[
 b_{\sigma} \in \mu_{q-1}(K) \quad \textrm{such that} \quad 
 \bar{b}_{\sigma}^{p^e} = \bar{u}_{\sigma} \in k. 
\]
We put 
\[
 c_{\sigma} =b_{\sigma}^{-n} \Nr_{E_r/K} (u_{\sigma} ) \in U_K^1. 
\]
Let 
\[
 g_{\sigma}=(a_{i,j})_{1 \leq i,j \leq n} 
 \in \mathcal{O}_K^{\times} U_{\mathfrak{I}}^1 
\]
be the element defined by
$a_{i,i}=b_{\sigma}$ for $1 \leq i \leq n-1$, 
$a_{n,n}=b_{\sigma} c_{\sigma}$ and 
$a_{i,j}=0$ if $i \neq j$. 
We put 
\begin{equation}\label{gsig}
 \bfg_{\sigma} = 
 (g_{\sigma},\varphi_{D,r}^{-n_{\sigma}},\sigma) \in G. 
\end{equation} 
Then $\bfg_{\sigma}$ stabilizes each component in 
\eqref{Mdec}. 
We choose elements 
$\alpha_r, \beta_r, \gamma_r \in K^{\mathrm{ac}}$ such that 
\begin{equation}\label{extgen}
\begin{split}
 &\alpha_r^{p^e +1} = - \varphi_r' , \quad 
 \beta_r^{p^{2e}} +\beta_r 
 = - \alpha_r^{-1} , \quad
 \gamma_r^{p^m} - \gamma_r =\beta_r^{p^e+1} +\varepsilon_0 ,\\ 
 &\alpha_r^{-1} \eta_r^{\frac{p^e}{p^e +1}} \equiv 1 , \quad 
 \beta_r^{-1} \theta_r^{p^e} \eta_r^{-\frac{p^e}{p^e +1}} 
 \equiv 1 ,\quad 
 \gamma_r^{-1} \sum_{i=0}^{\frac{f}{m} -1} 
 (\lambda_r \eta_r^{-1})^{p^{im}} \equiv 1 
 \mod\!_{>}\, 0. 
\end{split}
\end{equation}
For $\sigma \in W_{E_r}$, 
we set 
\begin{gather*}\label{abcdef}
\begin{aligned}
 a_{r,\sigma} =\frac{\sigma(\alpha_r)}{\alpha_r} , \quad
 b_{r,\sigma} =a_{r,\sigma} \sigma(\beta_r)-\beta_r, \quad 
 c_{r,\sigma}  =\sigma(\gamma_r)-\gamma_r + 
 \sum_{i=0}^{\frac{e}{m}-1}
 ( b_{r,\sigma}^{p^e} (\beta_r + b_{r,\sigma} ) )^{p^{im}}. 
\end{aligned}
\end{gather*}
Then we have 
$a_{r,\sigma}, b_{r,\sigma}, c_{r,\sigma} \in \cO_{\bfC}$ 
and 
\begin{equation}\label{eq:rel}
\begin{split}
 &a_{r,\sigma} \equiv 
 \left( 
 \frac{\sigma(\eta_r^{\frac{1}{p^e +1}})}{\eta_r^{\frac{1}{p^e +1}}} 
 \right)^{p^e}, \quad 
 b_{r,\sigma} \equiv \left( 
 \frac{\sigma(\theta_r)-\theta_r}{\eta_r^{\frac{1}{p^e +1}}}
 \right)^{p^e} ,\\ 
 &c_{r,\sigma} \equiv 
 \sum_{i=0}^{\frac{f}{m}-1} \left( 
 \frac{\sigma(\lambda_r)-\lambda_r}{\eta_r} \right)^{p^{im}} 
 -\sum_{i=0}^{\frac{e}{m}-1} \left( \frac{(\sigma (\theta_r) -\theta_r) 
 \sigma (\theta_r )^{p^e}}{\eta_r} \right)^{p^{im}} 
 \mod\!_{>}\, 0 
\end{split}
\end{equation}
by \eqref{defthelam} and \eqref{extgen}. 
Let
\[
 Q= \Bigl\{
 g(a,b,c) \ \Big| \ 
 a, b, c \in k^{\mathrm{ac}}, \ 
 a^{p^e +1} =1, \ 
 b^{p^{2e}}+b=0, \  
 c^{p^m} - c + b^{p^e+1} =0
 \Bigr\}
 \]
be the group whose multiplication is given by 
\[
 g(a_1,b_1,c_1) \cdot g(a_2,b_2,c_2)=
 g \biggl( a_1 a_2 ,a_1 b_2 +b_1 , 
 c_1 + c_2 + \sum_{i=0}^{\frac{e}{m} -1}
 (a_1 b_1^{p^e} b_2 )^{p^{im}} \biggr) . 
\]
Let $Q \rtimes \mathbb{Z}$ be the semidirect product, where  
$l \in \mathbb{Z}$ 
acts  on $Q$ by 
$g(a,b,c) 
\mapsto g(a^{q^{-l}},b^{q^{-l}},c^{q^{-l}})$. 
Let 
$(g(a,b,c),l) \in Q \rtimes \mathbb{Z}$ 
act on $\ol{\fX}_r$ by 
\begin{equation}\label{QZactXr}
 (\bom{z},\bom{y},(\bom{y}_i)_{1 \leq i \leq n-2}) 
 \mapsto 
 \biggl( \biggl( \bom{z} +\sum_{i=0}^{\frac{e}{m} -1} 
 (b \bom{y})^{p^{im}} +c \biggr)^{q^l},
 (a (\bom{y}+b^{p^e}))^{q^l}, 
 ( 
 a^{\frac{p^e +1}{2}} 
 \bom{y}_i^{q^l} )_{1 \leq i \leq n-2} \biggr) . 
\end{equation} 
We have the surjective homomorphism 
\begin{equation}\label{hom}
\Theta_r \colon W_{E_r} \to Q \rtimes \mathbb{Z};\ 
\sigma \mapsto \left(g(\bar{a}_{r,\sigma},\bar{b}_{r,\sigma},\bar{c}_{r,\sigma}),n_{\sigma}\right).
\end{equation}

\begin{prop}\label{LT}
Let $\sigma \in W_{E_r}$. 
Then, 
$\bfg_{\sigma} \in G$ 
stabilizes $\cX_r$, and 
induces the automorphism of 
$\ol{\fX}_r$ given by 
$\Theta_r (\sigma)$. 
\end{prop}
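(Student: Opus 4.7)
The plan is to verify three things in sequence: that $\bfg_\sigma$ lies in $G^0$, that it stabilizes $\cX_r$, and that the induced automorphism of $\overline{\fX_r}$ matches $\Theta_r(\sigma)$. For the first point, I compute $\det(g_\sigma) = b_\sigma^n c_\sigma$ and note $\Nrd_{D/K}(\varphi_{D,r}^{-n_\sigma})$ has valuation $-n_\sigma$; combining with the defining relation $c_\sigma = b_\sigma^{-n} \Nr_{E_r/K}(u_\sigma)$ and the norm-compatibility of the Artin map under $W_{E_r} \hookrightarrow W_K$ gives $\det(g_\sigma)^{-1} \Nrd_{D/K}(\varphi_{D,r}^{-n_\sigma}) \Art_K^{-1}(\sigma) \in \cO_K^{\times}$. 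In parallel I check that $(\bar a_{r,\sigma}, \bar b_{r,\sigma}, \bar c_{r,\sigma}) \in Q$: using $p^{2e} \equiv 1 \pmod{p^e+1}$ one finds $a_{r,\sigma}^{p^e+1} \in 1 + \mathfrak{m}$, $b_{r,\sigma}^{p^{2e}} + b_{r,\sigma} \in \mathfrak{m}$, and a telescoping computation using \eqref{abcdef} gives $c_{r,\sigma}^{p^m} - c_{r,\sigma} + b_{r,\sigma}^{p^e+1} \in \mathfrak{m}$, so $\Theta_r(\sigma)$ is well-defined.

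Next I would compute the pullback of coordinates. By \eqref{gl} and the diagonal shape of $g_\sigma$ we have $g_\sigma^{\ast} \bom{X}\!_i \equiv b_\sigma \bom{X}\!_i$ for $1 \le i \le n-1$ and $g_\sigma^{\ast} \bom{X}\!_n \equiv b_\sigma c_\sigma \bom{X}\!_n$ modulo higher-valuation terms controlled by Lemma \ref{exxi}. By \eqref{divi}, the $\varphi_{D,r}^{-n_\sigma}$-action multiplies each $\bom{X}\!_i$ by the corresponding leading Teichm\"uller coefficient and introduces a $q^{n_\sigma}$-th power twist. Combined with the $\sigma$-action on $\cO_{\bfC}$, the twist by $a_K(\sigma)$ on the base is exactly cancelled by $\gamma(g_\sigma, \varphi_{D,r}^{-n_\sigma})$ thanks to Step~1, so the image again lies in $\cM^{(0)}_{\infty,\ol{\eta},1}$. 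The resulting $\bfg_\sigma^{\ast} \bom{x}_i$ satisfy the inequalities \eqref{defcX} because the leading scalars $b_\sigma$ are units in $\mu_{q-1}(K)$ and cancel in the ratios $\bom{x}_i/\bom{x}_{i+1}$, giving stabilization of $\cX_r$.

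The heart of the proof is the computation of the induced action on $(\bom{y}, \bom{y}_1, \ldots, \bom{y}_{n-2}, \bom{z})$, which isolates three Galois effects corresponding to the three scales $\alpha_r, \beta_r, \gamma_r$: the action on $\bom{\eta}_r^{1/(p^e+1)}$ (equivalently $\alpha_r^{-1}$) produces the factor $\bar a_{r,\sigma}$; the action on $\bom{\theta}_r$ (equivalently $\beta_r$) produces the shift $\bar b_{r,\sigma}^{p^e}$ in $\bom{y}$ via \eqref{Ydef} and \eqref{ydef}; and the action on the partial Frobenius sum of $\bom{\lambda}_r/\bom{\eta}_r$ appearing in \eqref{zdef} produces, after applying the telescoping identity \eqref{T0N} and the correction term $\sum_{i=0}^{e/m-1}(b_{r,\sigma}^{p^e}(\beta_r + b_{r,\sigma}))^{p^{im}}$ built into $c_{r,\sigma}$ in \eqref{abcdef}, exactly the shift $\sum_{i=0}^{e/m-1}(b\bom{y})^{p^{im}} + \bar c_{r,\sigma}$ in $\bom{z}$ prescribed by \eqref{QZactXr}. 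The overall $q^l$-twist with $l = n_\sigma$ comes from the Galois action on $q$-th power compatible systems. Any additional contributions from the $g_\sigma$ and $\varphi_{D,r}^{-n_\sigma}$ parts must be tracked via $\bfg_\sigma^{\ast} \bom{S}$ and $\bfg_\sigma^{\ast} \bom{s}_i$ as in the proofs of Propositions \ref{frob} and \ref{gda}, but they vanish to the required precision because the dominant terms of $g_\sigma$ are diagonal scalars in $\mu_{q-1}(K)$.

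The main obstacle will be this last step. The subtlety is that applying $\sigma$ to \eqref{zdef} does not commute with the partial Frobenius sums, and only the precise combinatorial form of $c_{r,\sigma}$ produces the cancellation landing in $k^{\rmac}$; verifying this requires combining \eqref{T01}--\eqref{Ydef} with the precision estimates from Lemma \ref{fZapp} and \eqref{zyyn}. The exceptional case $p^e = 2$ additionally requires the $\varepsilon_0$ correction in \eqref{defthelam} and \eqref{extgen}, which must be checked to propagate correctly through \eqref{zyyn} so that the reduced action still has the form \eqref{QZactXr}.
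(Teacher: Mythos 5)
Your proposal matches the paper's own proof in all essentials: the computation of the induced automorphism proceeds by tracking the Galois action on $\bom{S}$, $\bom{Y}_{n-1}$, and then $\bom{z}$, splitting $\bfg_\sigma$ into its $\iGL_n\times D^\times$ component $g_\sigma$ (handled via \eqref{gf0} as in Proposition \ref{gda}) and its Weil component $(1,\varphi_{D,r}^{-n_\sigma},\sigma)$, with the three shifts $\bar a_{r,\sigma}$, $\bar b_{r,\sigma}^{p^e}$, $\bar c_{r,\sigma}$ arising from the Galois action on $\alpha_r$, $\beta_r$, $\gamma_r$ via the normalizations \eqref{extgen}. The only structural difference is that you front-load explicit checks that $\bfg_\sigma\in G^0$ and that $\Theta_r(\sigma)$ lands in $Q\rtimes\bZ$ (using $a_{r,\sigma}^{p^{2e}}=a_{r,\sigma}$ from $p^{2e}\equiv1\pmod{p^e+1}$, Frobenius-additivity modulo $\mathfrak m$, and the telescoping built into $c_{r,\sigma}$), which the paper treats as implicit routine; conversely, the paper carries out the precision estimates — \eqref{ZPsig}, \eqref{Delxi}, \eqref{Yn-1sigg}, \eqref{Yn-1theta} and the final chain of congruences modulo $>0$ — which you correctly flag as the main obstacle but do not execute. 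Stabilization of $\cX_r$ is, as in the paper, a byproduct of the coordinate computations being bounded; your observation about diagonal scalar cancellation in $\bom{x}_i/\bom{x}_{i+1}$ is right for $i\le n-2$, though for $i=n-1$ one must additionally track the factor $c_\sigma^{-1}\in U_K^1$, which the paper handles inside the $\bom{Y}_{n-1}$ estimate \eqref{Yn-1sigg}.
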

\begin{proof}
Let $P \in \mathcal{X}_r(\mathbf{C})$. 
We have 
\begin{align}
 \bom{S} (P \bfg_{\sigma} ) &= 
 f_0 \bigl( \bom{X}(P
 \bfg_{\sigma}) \bigr) - 
 f_0 (\bom{\xi}^0_r ) \notag \\
 &= f_0 \bigl( \bom{X}(P
 \bfg_{\sigma}) \bigr) - 
 f_0 \bigl( \bom{X}(P(1,\varphi_{D,r}^{-n_{\sigma}},\sigma)) \bigr) + 
 \sigma^{-1} \bigl( f_0 ( \bom{X}(P) ) \bigr) - 
 f_0 (\bom{\xi}^0_r ) \notag \\ 
 &\equiv \Delta_{g_{\sigma}} 
 \bigl( \bom{X}(P(1,\varphi_{D,r}^{-n_{\sigma}},\sigma)) \bigr) + 
 \sigma^{-1} \bigl( \bom{S}(P) +f_0 (\bom{\xi}^0_r ) \bigr) - 
 f_0 (\bom{\xi}^0_r ) \notag \\ 
 &\equiv \sigma^{-1} \bigl( \bom{S}(P) \bigr) 
 +f_0 (\sigma^{-1} (\bom{\xi}^0_r ) ) - 
 f_0 (\bom{\xi}^0_r ) \mod\!_{>}\, \frac{1}{n} \label{ZPsig}
\end{align}
by \eqref{Sdef} and \eqref{gf0}. 
We have 
\begin{equation}\label{Delxi}
 f_0 (\sigma^{-1} (\bom{\xi}^0_r )) - f_0 (\bom{\xi}^0_r ) 
 \equiv n'(\sigma^{-1}(\bom{\lambda}_r) -\bom{\lambda}_r ) 
 \mod\!_{>}\, \frac{1}{n} . 
\end{equation}
We put $s_i (\bom{X})=(\bom{X}_i/\bom{X}_{i+1})^{q^i(q-1)}$ 
for $1 \leq i \leq n-1$. 
We have 
\begin{align}
 &s_{n-1} (\bom{\xi}^0_r ) \bom{Y}_{n-1} (P \bfg_{\sigma} ) = 
 s_{n-1} \bigl( \bom{X}(P
 \bfg_{\sigma}) \bigr) - 
 s_{n-1} (\bom{\xi}^0_r ) \notag \\
 &= s_{n-1} \bigl( \bom{X}(P
 \bfg_{\sigma}) \bigr) - 
 s_{n-1} \bigl( \bom{X}(P(1,\varphi_{D,r}^{-n_{\sigma}},\sigma)) \bigr) + 
 \sigma^{-1} \bigl( s_{n-1} ( \bom{X}(P) ) \bigr) - 
 s_{n-1} (\bom{\xi}^0_r ) \notag \\ 
 &\equiv 
 \sigma^{-1} \bigl( s_{n-1} (\bom{\xi}^0_r ) \bom{Y}_{n-1} (P) \bigr) + 
 \sigma^{-1} \bigl( s_{n-1} (\bom{\xi}^0_r )  \bigr) - 
 s_{n-1} (\bom{\xi}^0_r ) 
 \mod\!_{>}\, \frac{q-1}{n} + \frac{1}{np^e} \label{Yn-1sigg}
\end{align}
by \eqref{Yidef} and \eqref{gf0}. 
Hence, we have 
\begin{equation}\label{Yn-1theta}
 \bom{Y}_{n-1} (P \bfg_{\sigma} ) \equiv 
 \sigma^{-1} \bigl( \bom{Y}_{n-1} (P) \bigr) + 
 \sigma^{-1} (\bom{\theta}_r ) - \bom{\theta}_r 
 \mod\!_{>}\, \frac{1}{np^e} . 
\end{equation}
We put 
$\bom{\theta}_{r,\sigma} = \sigma (\bom{\theta}_r ) - \bom{\theta}_r$ 
and 
$\bom{\lambda}_{r,\sigma} = \sigma (\bom{\lambda}_r ) - \bom{\lambda}_r$. 
We have 
\begin{align*}
 \sigma \bigl( \bom{z}(P \bfg_{\sigma} ) \bigr) 
 &=
 \sigma \Biggl( \sum_{i=0}^{\frac{e}{m} -1} 
 \biggl( \frac{\bom{\theta}_r^{p^e} 
 \bom{Y}\!_{n-1}(P \bfg_{\sigma} )}{\bom{\eta}_r} 
 \biggr)^{p^{im}} -\frac{1}{n'} 
 \sum_{i=0}^{\frac{f}{m} -1} 
 \biggl( \frac{\bom{S}(P \bfg_{\sigma} )}{\bom{\eta}_r} \biggr)^{p^{im}} \Biggr) 
 \\ 
 &\equiv \bom{z}(P) + 
 \sum_{i=0}^{\frac{e}{m} -1} 
 \biggl( \frac{\bom{\theta}_{r,\sigma}^{p^e} 
 \bom{Y}\!_{n-1} (P) -\sigma(\bom{\theta}_r^{p^e}) 
 \bom{\theta}_{r,\sigma}}{\bom{\eta}_r} 
 \biggr)^{p^{im}} 
 +\sum_{i=0}^{\frac{f}{m} -1} 
 \biggl( \frac{\bom{\lambda}_{r,\sigma}}{\bom{\eta}_r} \biggr)^{p^{im}} \\ 
 &\equiv \bom{z}(P) + 
 \sum_{i=0}^{\frac{e}{m} -1} 
 (b_{r,\sigma} \bom{y} (P) )^{p^{im}} 
 +c_{r,\sigma} 
 \mod\!_{>}\, 0 
\end{align*}
by \eqref{zdef}, \eqref{eq:rel}, \eqref{ZPsig}, 
\eqref{Delxi}, \eqref{Yn-1sigg} and 
\eqref{Yn-1theta}. 
We see also that 
\begin{align*}
 \sigma \biggl( 
 \frac{\bom{Y}_{n-1} (P \bfg_{\sigma} )}{\bom{\eta}_r^{1/(p^e +1)}} \biggr) 
 \equiv a_{r,\sigma}^{-p^e} (\bom{y}-b_{r,\sigma}^{\frac{1}{p^e}})
 \equiv a_{r,\sigma} (\bom{y}+b_{r,\sigma}^{p^e})  \mod\!_{>}\, 0 
\end{align*}
by \eqref{eq:rel} and \eqref{Yn-1theta}. 
By the same argument using \eqref{xx2}, we have 
\[
 \bom{Y}_i(P \bfg_{\sigma} ) 
 \equiv \sigma^{-1}(\bom{Y}_i(P)) \mod\!_{>}\, \frac{1}{2n} 
\]
for $1 \leq i \leq n-1$. 
This implies 
\begin{equation*}
 \bom{y}_i(P \bfg_{\sigma} ) \equiv 
 \frac{\sigma^{-1}( \bom{\eta}_r^{1/2})}{\bom{\eta}_r^{1/2}} 
 \sigma^{-1}(\bom{y}_i(P))
 \equiv 
 a_{r,\sigma}^{(p^e +1)/2} \bom{y}_i(P)^{q^{n_{\sigma}}} \mod\!_{>}\, 0
\end{equation*}
for $1 \leq i \leq n-1$ by \eqref{eq:rel}. 
\end{proof}

\paragraph{Stabilizer}
We put 
$n_1 =\gcd (n,p^m-1)$. 
We put 
\[
 \varphi_r'' =\varphi_r'^{n_1} \quad \textrm{and} \quad 
 F_r =K(\varphi_r''). 
\]
Let $\sigma \in W_{F_r}$. 
We put 
\[
 \zeta_{\sigma} =\frac{\sigma^{-1}(\varphi_r')}{\varphi_r'}. 
\]
Let $\zeta_{\sigma}^{1/p^e}$ be the 
$p^e$-th root of $\zeta_{\sigma}$ in $\mu_{p^m -1}(K)$. 
We put 
\[
 \varphi_{r,\sigma} =\zeta_{\sigma}^{1/p^e} \varphi_r. 
\]
Let $\cG_{r,\sigma}$ 
be the one-dimensional formal $\cO_{L_r}$-module 
over $\cO_{\wh{L}_r^{\rm ur}}$ 
defined similarly to $\cG_r$ 
changing $\varphi_r$ by $\varphi_{r,\sigma}$. 
We take a compatible system 
$\{t_{r,j,\sigma}\}_{j \geq 1}$ in $\bfC$ 
such that 
\[
 \frac{\sigma^{-1} (t_{r,1})}{t_{r,1,\sigma}} \equiv 1 \mod\!_{>}\, 0, \quad 
 [\varphi_{r,\sigma}]_{\cG_{r,\sigma}}(t_{r,1,\sigma})=0, \quad 
 [\varphi_{r,\sigma}]_{\cG_{r,\sigma}}(t_{r,j,\sigma})=t_{r,j-1,\sigma} 
\]
for $j \geq 2$. 
We construct 
$\xi_{r,\sigma}$ as in Lemma \ref{exxi} 
using $\{t_{r,j,\sigma}\}_{j \geq 1}$. 
Then $\xi_{r,\sigma}$ has CM by $L_r$.

\begin{lem}\label{aprbyCM}
For $\sigma \in W_{F_r}$, 
we have 
\begin{align*}
 \frac{\sigma^{-1} ( \xi_{r,i})}{\xi_{r,\sigma,i}} 
 &\equiv 1 
 \mod\!_{\geq}\, \frac{1}{q^{i-1}p^{e-1}(p-1)} \quad 
 \textrm{for $1 \leq i \leq n$,} \\ 
 \sigma^{-1} (\bom{\theta}_r) &\equiv 
 \bom{\theta}_r 
 \mod\!_{\geq}\, \frac{1}{n(p^e+1)} . 
\end{align*}
\end{lem}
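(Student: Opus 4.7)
The plan is to prove (i) first by matching the compatible systems $\{\sigma^{-1}(t_{r,m})\}$ and $\{t_{r,m,\sigma}\}$ through their nearly identical recurrences, and then to deduce (ii) from (i) via a Hensel-type analysis of the defining equation of $\bom{\theta}_r$.

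For (i), I would write $\sigma^{-1}(\varphi_r) = \omega \cdot \varphi_{r,\sigma}$. Since $\sigma \in W_{F_r}$, both $\sigma^{-1}(\varphi_r)$ and $\varphi_{r,\sigma} = \zeta_\sigma^{1/p^e}\varphi_r$ are $p^e$-th roots of the common element $\sigma^{-1}(\varphi_r') = \zeta_\sigma \varphi_r'$, so $\omega \in \mu_{p^e}(\bfC)$, whence the standard cyclotomic estimate gives $v(\omega - 1) \geq 1/(p^{e-1}(p-1))$. Setting $u_m = \sigma^{-1}(t_{r,m})/t_{r,m,\sigma}$, the base case $u_1^{q-1} = \omega$ follows from $t_{r,1}^{q-1} = -\varphi_r$ and $t_{r,1,\sigma}^{q-1} = -\varphi_{r,\sigma}$; combined with the normalization $u_1 \equiv 1 \mod\!_{>}\, 0$ and the invertibility of $q-1$, this forces $v(u_1 - 1) = v(\omega - 1) \geq 1/(p^{e-1}(p-1))$. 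Applying $\sigma^{-1}$ to the Lubin-Tate recurrence yields $\omega A_m u_m + B_m u_m^q = u_{m-1}$ with $A_m + B_m = 1$; in our valuation range the binomial expansion of $(1+v_m)^q - 1$ is dominated by $v_m^q$, so $v(u_m - 1) \approx v(u_{m-1} - 1)/q$. The limit $\sigma^{-1}(\xi_{r,1})/\xi_{r,\sigma,1} = \lim_m u_m^{q^{m-1}}$ therefore lies within $1/(p^{e-1}(p-1))$ of $1$. For $i \geq 2$, the identity $\xi_{r,i}^q = \xi_{r,i-1}$ and the same dominant-term analysis of $(1+s_i)^q = 1 + s_{i-1}$ give $v(s_i) = v(s_{i-1})/q$, iterating to the claimed bound $1/(q^{i-1} p^{e-1}(p-1))$.

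For (ii), part (i) with $i = 1$ implies $\sigma^{-1}(\bom{\eta}_r)/\bom{\xi}_{r,\sigma,1}^{q-1} \equiv 1 \mod 1/(p^{e-1}(p-1))$. The leading-order identities $\bom{\eta}_r \equiv -\varphi_r$ and $\bom{\xi}_{r,\sigma,1}^{q-1} \equiv -\varphi_{r,\sigma} = -\zeta_\sigma^{1/p^e}\varphi_r$ modulo $1/n$ show $\bom{\xi}_{r,\sigma,1}^{q-1}/\bom{\eta}_r \equiv \zeta_\sigma^{1/p^e}$ to leading order. The key arithmetic input is that $m \mid e$, so $p^m - 1 \mid p^e - 1$; since $\zeta_\sigma^{1/p^e} \in \mu_{p^m-1}(K)$ by construction, $(\zeta_\sigma^{1/p^e})^{p^e - 1} = 1$. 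Raising to the $(p^e - 1)$-th power therefore kills the leading-order discrepancy and yields $\sigma^{-1}(\bom{\eta}_r)^{p^e - 1} \equiv \bom{\eta}_r^{p^e - 1}$ modulo sufficient precision. Applying $\sigma^{-1}$ to $\bom{\theta}_r^{p^{2e}} + \bom{\eta}_r^{p^e - 1}(\bom{\theta}_r + 1) = 0$ and writing $\epsilon = \sigma^{-1}(\bom{\theta}_r) - \bom{\theta}_r$, one obtains $\epsilon^{p^{2e}} + \bom{\eta}_r^{p^e - 1} \epsilon + \text{(cross terms)} = \text{(small error)}$; the Newton polygon for the leading pair $\{p^{2e}, 1\}$ has critical exponent exactly $1/(n(p^e + 1))$, forcing $v(\epsilon) \geq 1/(n(p^e + 1))$.

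The main obstacle will be the careful propagation of valuation estimates through the inductive limit in (i): I must verify that the factor $\omega - 1$ entering each step of the recurrence does not cumulatively degrade the $1/(p^{e-1}(p-1))$ bound under the $q^{m-1}$-th power limit. In (ii), the numerical matching between the residual error in $\sigma^{-1}(\bom{\eta}_r)^{p^e - 1} - \bom{\eta}_r^{p^e - 1}$ and the Newton polygon threshold $1/(n(p^e+1))$ must be tracked precisely, though this reduces to (i) together with the divisibility $p^m - 1 \mid p^e - 1$ coming from $m \mid e$.
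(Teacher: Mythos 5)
Your proposal follows essentially the same route as the paper's proof, which is quite terse: it records the cyclotomic estimate \eqref{sigphi} (your observation that $\sigma^{-1}(\varphi_r)/\varphi_{r,\sigma}$ is a $p^e$-th root of unity, whence $v(\omega-1)\geq 1/(p^{e-1}(p-1))$, is exactly that), derives the algebraic relation
\[
\bigl(\sigma^{-1}(\bom{\theta}_r)-\bom{\theta}_r\bigr)^{p^{2e}}
+\bom{\eta}_r^{p^e-1}\bigl(\sigma^{-1}(\bom{\theta}_r)-\bom{\theta}_r\bigr)
+\bigl(1+\sigma^{-1}(\bom{\theta}_r)\bigr)\bigl(\sigma^{-1}(\bom{\eta}_r)^{p^e-1}-\bom{\eta}_r^{p^e-1}\bigr)=0
\]
from \eqref{defthelam}, and leaves the rest implicit. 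You fill in the two steps the paper compresses: the propagation through the compatible system $\{t_{r,m}\}$ (and then via $\bom{\xi}_{r,i}=\bom{\xi}_{r,i+1}^q$) to get the $q^{1-i}$ scaling in part~(i), and the Newton-polygon reading of the cubic-type relation in part~(ii). Your explicit appeal to $m\mid e$, hence $p^m-1\mid p^e-1$, which is what makes $(\zeta_\sigma^{1/p^e})^{p^e-1}=1$ and kills the leading discrepancy in $\sigma^{-1}(\bom{\eta}_r)^{p^e-1}-\bom{\eta}_r^{p^e-1}$, is a genuinely useful gloss on an input the paper leaves unsaid.

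Two caveats worth tightening. First, in the recurrence for part~(i) the quantity $\omega-1$ enters at every step through the term $(\omega-1)A_m$; the claim that the bound does not degrade under the $q^{m-1}$-th power limit depends on $v(A_m)\to 1/n$ being strictly positive, which absorbs the cumulative contribution. Second, in part~(ii) the Newton-polygon argument gives $v(\epsilon)\geq 1/(n(p^e+1))$ only once one has shown $v(a_0)\geq p^{2e}/(n(p^e+1))$, equivalently $v\bigl((\sigma^{-1}(\bom{\eta}_r)/\bom{\eta}_r)^{p^e-1}-1\bigr)\geq 1/(n(p^e+1))$; this follows from $1/(p^{e-1}(p-1))\geq 1/(n(p^e+1))$ (true since $n=p^e n'$), but it requires that the ratio $\bom{\xi}_{r,\sigma,1}^{q-1}/\bom{\eta}_r$ equal $\zeta_\sigma^{1/p^e}$ at least to that precision, which your ``to leading order'' phrasing should be upgraded to a verified estimate. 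Neither is a conceptual gap; both are bookkeeping.
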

\begin{proof}
We have 
\begin{equation}\label{sigphi}
 \frac{\sigma^{-1} (\varphi_r)}{\varphi_r} 
 \equiv \zeta_{\sigma}^{1/p^e} 
 \mod\!_{\geq}\, \frac{1}{p^{e-1}(p-1)}. 
\end{equation}
We obtain the claims by 
\eqref{sigphi} and 
\[
 \bigl( 
 \sigma^{-1} (\bom{\theta}_r) - \bom{\theta}_r 
 \bigr)^{p^{2e}} 
 +\bom{\eta}_r^{p^e -1} \bigl( 
 \sigma^{-1} (\bom{\theta}_r) - \bom{\theta}_r 
 \bigr) + 
 \bigl( 
 \bom{1}+\sigma^{-1} (\bom{\theta}_r) \bigr) 
 \bigl( 
 \sigma^{-1} (\bom{\eta}_r)^{p^e -1} 
 -\bom{\eta}_r^{p^e -1} \bigr) =\bom{0}, 
\]
which follows from \eqref{defthelam}. 
\end{proof}

We define 
$j_r \colon W_{F_r} \to L_r^{\times} \backslash 
 (\iGL_n (K) \times D^{\times} )$ as follows: 
\begin{quote}
Let $\sigma \in W_{F_r}$. 
Since $\xi_{r,\sigma}$ has CM by $L_r$, 
there exists $(g,d) \in \iGL_n (K) \times D^{\times}$ 
uniquely up to left multiplication by $L_r^{\times}$ 
such that 
$(g,d,1) \in G^0$ and 
$\xi_{r,\sigma} (g,d,1) =\xi_r$ 
by Lemma \ref{CMtrans}. 
We put 
$j_r (\sigma )=L_r^{\times} (g,\varphi_{D,r}^{-n_{\sigma}} d)$. 
\end{quote}
For $\sigma \in W_{L_r}$, we put 
$a_{\sigma}=\mathrm{Art}_{L_r}^{-1}(\sigma) \in L_r^{\times}$ 
and 
$u_{\sigma}=a_{\sigma} \varphi_{r}^{-n_{\sigma}} 
 \in \mathcal{O}_{L_r}^{\times}$. 
\begin{lem}\label{jLsig}
For $\sigma \in W_{L_r}$, we have 
$j_r (\sigma)=L_r^{\times} (1,a_{\sigma}^{-1})$. 
\end{lem}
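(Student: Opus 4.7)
The plan is to exhibit explicitly an element $(g,d) \in \iGL_n(K) \times D^\times$ with $\xi_{r,\sigma} \cdot (g,d,1) = \xi_r$ and then read off $j_r(\sigma)$ from its defining recipe. The starting observation is that for $\sigma \in W_{L_r}$, since $\sigma$ fixes $L_r$ pointwise, we have $\zeta_\sigma = \sigma^{-1}(\varphi_r^{p^e})/\varphi_r^{p^e} = 1$, hence $\varphi_{r,\sigma} = \varphi_r$ and $\cG_{r,\sigma} = \cG_r$. Thus $\xi_r$ and $\xi_{r,\sigma}$ are both CM points of $\cM_\infty(\bfC)$ with CM by $L_r$ attached to the \emph{same} formal $\cO_{L_r}$-module $\cG_r$, differing only via the compatible torsion systems $\{t_{r,m}\}$ and $\{t_{r,m,\sigma}\}$ defining their level structures.

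The central step is to identify these two torsion systems via Lubin--Tate theory for $\cG_r/\cO_{L_r}$. Under the paper's convention (geometric-Frobenius normalization of $\mathrm{Art}_{L_r}$), the action of $\sigma \in W_{L_r}$ on $\cG_r[\varphi_r^\infty]$ factors through the projection $W_{L_r} \to \cO_{L_r}^\times$, $\sigma \mapsto u_\sigma = a_\sigma \varphi_r^{-n_\sigma}$, and one has $\sigma(t_{r,m}) = [u_\sigma]_{\cG_r}(t_{r,m})$ on the full tower. Combined with the defining congruence $\sigma^{-1}(t_{r,1})/t_{r,1,\sigma} \equiv 1 \mod\!_{>}\, 0$ and the rigidity of the $q$ nonzero $\varphi_r$-torsion points of $\cG_r$ (their leading coefficients in $\bfC$ are distinct $(q-1)$-st roots, so two of them with ratio $\equiv 1 \mod\!_{>}\, 0$ must coincide), this forces $t_{r,m,\sigma} = [u_\sigma^{-1}]_{\cG_r}(t_{r,m})$ identically for all $m \geq 1$. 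Via the limit formula \eqref{xilim} defining $\xi_r$ and its analogue for $\xi_{r,\sigma}$, this translates into the equality
\[
 \xi_{r,\sigma} = \xi_r \cdot (i_{\xi_r, M}(u_\sigma^{-1}), 1, 1) \quad \textrm{in } \cM_\infty(\bfC),
\]
where we use the interpretation of the $\iGL_n(K)$-action as precomposition on level structures together with the CM identification $V_{\fp}(\cG_r) \simeq L_r$.

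Now $i_{\xi_r}(u_\sigma^{-1}) = (i_{\xi_r,M}(u_\sigma^{-1}), i_{\xi_r,D}(u_\sigma^{-1})) \in L_r^\times$ stabilizes $\xi_r$ by Lemma \ref{CMtrans}, so the displayed equation may be rewritten as $\xi_{r,\sigma} = \xi_r \cdot (1, i_{\xi_r, D}(u_\sigma), 1)$, showing that $(g,d) = (1, i_{\xi_r, D}(u_\sigma^{-1}))$ satisfies $\xi_{r,\sigma} \cdot (g,d,1) = \xi_r$. Using $i_{\xi_r,D}(\varphi_r) = \varphi_{D,r}$ from Lemma \ref{exxi}(ii) and $a_\sigma = u_\sigma \varphi_r^{n_\sigma}$, we compute
\[
 \varphi_{D,r}^{-n_\sigma} d = i_{\xi_r, D}(\varphi_r^{-n_\sigma} u_\sigma^{-1}) = i_{\xi_r, D}(a_\sigma^{-1}),
\]
so that $j_r(\sigma) = L_r^\times (1, a_\sigma^{-1})$ by the definition of $j_r$.

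The main technical obstacle will be pinning down the sign in the Lubin--Tate action $\sigma(t_{r,m}) = [u_\sigma]_{\cG_r}(t_{r,m})$: one must check that the paper's normalization of $a_K$ (defined via the action on $\wh{\wedge \cG_0}$-torsion) propagates to the Lubin--Tate action on $\cG_r/\cO_{L_r}$ in exactly the asserted sign, rather than its inverse. Once this compatibility is verified, the rest of the argument is routine bookkeeping with the CM data.
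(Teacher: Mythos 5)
Your argument takes a genuinely different route from the paper's. The paper's proof of Lemma~\ref{jLsig} is a one-line citation of \cite[Lemma 3.1.3]{BWMax} together with the remark that the $W_K$-action used here is inverse to the one in \cite{BWMax}; in effect the paper outsources exactly the CM-point/Lubin--Tate computation that you spell out from scratch. Your strategy — observe that $\sigma\in W_{L_r}$ fixes $\varphi_r$, so $\cG_{r,\sigma}=\cG_r$; use the Lubin--Tate description of the Galois action on $\{t_{r,m}\}$; read the resulting relation into coordinates and translate it into the right $\iGL_n(K)\times D^\times$-action to identify $(g,d)$; then clean up via $i_{\xi_r}(L_r^\times)$ using Lemma~\ref{CMtrans} and $i_{\xi_r,D}(\varphi_r)=\varphi_{D,r}$ — is a legitimate self-contained argument, and it reproves the relevant special case of the cited lemma. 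The trade-off is length and exposure to convention mismatches: the paper's citation is short but requires knowing the sign dictionary with \cite{BWMax}; your argument is longer but keeps everything in-house.

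One point you flag as the main technical obstacle — the sign in $\sigma(t_{r,m})=[u_\sigma]_{\cG_r}(t_{r,m})$ — is in fact correct as you have written it, and it is worth confirming this so the reader does not worry: with the geometric-Frobenius normalization used in the paper, $\Art_{L_r}(u)$ for $u\in\cO_{L_r}^\times$ acts on $\cG_r[\varphi_r^\infty]$ by $[u]$ (with the arithmetic normalization it would act by $[u^{-1}]$), and $\Art_{L_r}(\varphi_r)$ acts trivially on the torsion tower, so indeed $\sigma(t_{r,m})=[u_\sigma](t_{r,m})$ and hence $\sigma^{-1}(t_{r,m})=[u_\sigma^{-1}](t_{r,m})$. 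The remaining step you describe as routine — passing from $t_{r,m,\sigma}=[u_\sigma^{-1}](t_{r,m})$ through the limit formula \eqref{xilim} to the displayed equality $\xi_{r,\sigma}=\xi_r\cdot(i_{\xi_r,M}(u_\sigma^{-1}),1,1)$ — does deserve a little more care than the proposal gives it, since the paper's action is a right action defined on coordinate rings (equations \eqref{gl} and \eqref{divi}, with the convention that $d$ acts through the expansion of $d^{-1}$), and the limit formula only sees the $\mu_{q-1}$-leading part of $[u_\sigma^{-1}]$; but both of these are manageable bookkeeping and do not affect the conclusion. Subject to writing out that translation carefully, the argument is sound and recovers $j_r(\sigma)=L_r^\times(1,a_\sigma^{-1})$ as claimed.
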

\begin{proof}
This follows from \cite[Lemma 3.1.3]{BWMax}. Note that 
our action of $W_K$ is inverse to that in \cite{BWMax}. 
\end{proof}

We put 
\[
 \cS_r = \{ (g,d,\sigma) \in G \mid \sigma \in W_{F_r}, \ 
 j_r (\sigma )=L_r^{\times} (g,d) \}. 
\]

\begin{lem}\label{SLact}
The action of $\cS_r$ on $\cM^{(0)}_{\infty,\ol{\eta}}$ stabilizes $\cX_r$, 
and induces the action on $\ol{\fX}_r$. 
\end{lem}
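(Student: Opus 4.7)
Let $(g',d',\sigma) \in \cS_r$. By the definition of $\cS_r$, there exists $(g,d,1) \in G^0$ satisfying $\xi_{r,\sigma} \cdot (g,d,1) = \xi_r$ such that $(g',d')$ lies in the same $L_r^\times$-coset as $(g, \varphi_{D,r}^{-n_\sigma} d)$. Since left multiplication by $i_{\xi_r}(L_r^\times)$ preserves $\cX_r$ and induces the trivial action on $\ol{\fX_r}$ (by Proposition \ref{gda} and Remark \ref{Ktri}), the plan is to reduce to the case $(g',d',\sigma) = (1,\varphi_{D,r}^{-n_\sigma},\sigma) \cdot (g,d,1)$, using the identity $(1,\varphi_{D,r}^{-n_\sigma},\sigma)(g,d,1) = (g, \varphi_{D,r}^{-n_\sigma} d, \sigma)$ in $G$.

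Then I would analyze the action in two stages. The Galois-type element $(1,\varphi_{D,r}^{-n_\sigma},\sigma)$ sends $\xi_r \in \cM^{(0)}_{\infty,\ol\eta,1}$ to the point of $\cM^{(0)}_{\infty,\ol\eta,a_K(\sigma)}$ obtained by applying $\sigma^{-1}$ to the coordinates, as in the definition in Section \ref{gpfm}. Lemma \ref{aprbyCM} compares these Galois conjugates $\sigma^{-1}(\bom{\xi}_{r,i})$ to the coordinates of $\xi_{r,\sigma}$ and also controls $\sigma^{-1}(\bom{\theta}_r) - \bom{\theta}_r$; the error bound $1/(n(p^e+1))$ is precisely the tolerance appearing in \eqref{defcX}. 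The second stage is the action of $(g,d,1)$, which sends $\xi_{r,\sigma}$ to $\xi_r$ by construction, so the composition maps $\xi_r$ to a point lying inside $\cX_r$.

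The main work is to upgrade this from the single point $\xi_r$ to all of $\cX_r$. For this, I would compute the action of $(g',d',\sigma)$ on the normalized coordinates $\bom{y},\bom{y}_1,\ldots,\bom{y}_{n-2},\bom{z}$ modulo terms of strictly positive valuation, following the templates of Propositions \ref{frob}, \ref{gda}, and \ref{LT}, feeding in the Galois estimates of Lemma \ref{aprbyCM} at each step. The hard part will be tracking the error propagation through the composition, since neither $(1,\varphi_{D,r}^{-n_\sigma},\sigma)$ nor $(g,d,1)$ individually needs to stabilize $\cX_r$; only their product does, and the approximations must be sharp enough at the boundary of $\cX_r$. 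Once the inequalities \eqref{defcX} are shown to be preserved, the integral coordinates defining $\fX_r$ map to integral functions on $\cX_r$, and the action descends to $\fX_r$ and thus induces the sought action on the special fiber $\ol{\fX_r}$.
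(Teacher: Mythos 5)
Your plan has the right skeleton: take $(g,\varphi_{D,r}^{-n_\sigma}d,\sigma)$ with $\xi_{r,\sigma}(g,d,1)=\xi_r$, reduce modulo $i_{\xi_r}(L_r^\times)$ and Proposition~\ref{frob} to the case $(g,d)\in\fI^\times\times\cO_D^\times$, and feed the estimates of Lemma~\ref{aprbyCM} into the stability check for \eqref{defcX}. However, there are two problems.

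First, a small inaccuracy: $i_{\xi_r}(L_r^\times)$ does \emph{not} act trivially on $\ol{\fX_r}$; Proposition~\ref{frob} shows $\bfg_r=i_{\xi_r}(\varphi_r)$ induces a nontrivial automorphism. What Remark~\ref{Ktri} gives is only triviality for the diagonal $K^\times$. The reduction to a coset representative is still legitimate, but the justification should be that $i_{\xi_r}(L_r^\times)$ stabilizes $\cX_r$ and already acts on $\ol{\fX_r}$ (Propositions~\ref{gda} and~\ref{frob}), not that the induced action vanishes.

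Second, and more seriously, you are missing the one nontrivial algebraic fact that makes the argument go through. Writing $g=(a_{i,j})$ with $a_{i,j}=\sum_l a_{i,j}^{(l)}\varpi_r^l$ and $d^{-1}=\sum_i d_i\varphi_{D,r}^i$, the relation $\xi_{r,\sigma}(g,d,1)=\xi_r$, combined with \eqref{x0}, \eqref{xx2} and the tower identities $\xi_{r,\sigma,i}=\xi_{r,\sigma,i+1}^q$, $\xi_{r,i}=\xi_{r,i+1}^q$, forces
\[
 \frac{a_{i,i}^{(0)}}{a_{i+1,i+1}^{(0)}}=d_0^{q-1}\quad\text{for all $1\le i\le n-1$,}
\]
i.e.\ these ratios are independent of $i$. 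This is precisely what makes the first family of inequalities in \eqref{defcX} (after rewriting them in terms of $\bom{X}_i/\bom{X}_{i+1}$, with the adjusted tolerance $3/(2nq^i)$) stable under $(g,\varphi_{D,r}^{-n_\sigma}d,\sigma)$: the deviations coming from the leading diagonal entries of $g$ and from $d_0$ cancel uniformly across $i$. Without this identity, knowing merely that $(g,d)\in\fI^\times\times\cO_D^\times$ and that the action fixes one point does not control the first-order distortion of the ratios $\bom{X}_i/\bom{X}_{i+1}$, so ``error propagation through the composition'' — which you flag as the hard part — cannot be closed. Your plan to compute the full action on the normalized coordinates $\bom{y},\bom{y}_i,\bom{z}$ modulo positive valuation is also more than the lemma requires: one only needs to verify that the two families of inequalities \eqref{defcX} are preserved, which the paper does directly via \eqref{x0}, \eqref{xx2}, the identity above, and Lemma~\ref{aprbyCM}.
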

\begin{proof}
We take an element of $\cS_r$, 
and write it as $(g,\varphi_{D,r}^{-n_{\sigma}} d,\sigma )$, 
where $(g,d,1) \in G^0$ and $\sigma \in W_{F_r}$. 
Since 
$\xi_{r,\sigma} (g,d,1) =\xi_r$, 
we have 
$(g,d ) \in (\varphi_{M,r},\varphi_{D,r})^l (\fI^{\times} \times \cO_D^{\times} )$ 
by Lemma \ref{vgd} and Lemma \ref{aprbyCM}. 

To show the claims, 
we may assume that 
$(g,d ) \in \fI^{\times} \times \cO_D^{\times}$ 
by Proposition \ref{frob} \ref{enu:actgr}. 
We write 
$g=(a_{i,j})_{1 \leq i,j \leq n} \in \fI^{\times}$ 
and 
$a_{i,j}=
\sum_{l=0}^{\infty}a_{i,j}^{(l)}\varpi_r^l$ with 
$a_{i,j}^{(l)} \in \mu_{q-1} (K) \cup\{0\}$, 
and 
$d^{-1}=\sum_{i=0}^{\infty}d_i \varphi_{D,r}^i$ with 
$d_i \in \mu_{q^n-1}(K_n) \cup \{0\}$. 
For $1 \leq i \leq n-1$, 
we have 
\begin{equation}\label{aid0}
 \frac{a_{i,i}^{(0)}}{a_{i+1,i+1}^{(0)}} =d_0^{q-1}     
\end{equation}
by 
$\xi_{r,\sigma} (g,d,1) =\xi_r$ 
using \eqref{x0}, \eqref{xx2},  
$\xi_{r,\sigma,i}=\xi_{r,\sigma,i+1}^q$ and 
$\xi_{r,i}=\xi_{r,i+1}^q$. 
The condition on the first line in 
\eqref{defcX} is equivalent to 
\begin{equation}\label{cXdefeq1}
 v \biggl( \frac{\bom{X}_i}{\bom{X}_{i+1}} -
 \Bigl( \frac{\bom{X}_{n-1}}{\bom{X}_n} \Bigr)^{q^{n-1-i}} 
 \biggr) 
 \geq \frac{3}{2nq^i} \quad 
 \textrm{for $1 \leq i \leq n-2$.}     
\end{equation}
We see that 
the condition \eqref{cXdefeq1} 
is stable under the action of 
$(g,\varphi_{D,r}^{-n_{\sigma}} d,\sigma )$ 
using \eqref{x0} and \eqref{xx2}, 
because 
$a_{i,i}^{(0)}/a_{i+1,i+1}^{(0)}$ 
is independent of $i$ by \eqref{aid0}. 
We see that 
the condition on the second line in 
\eqref{defcX} 
is stable 
under the action of 
$(g,\varphi_{D,r}^{-n_{\sigma}} d,\sigma )$ 
by Lemma \ref{aprbyCM} 
using \eqref{x0} and \eqref{xx2}. 
\end{proof}

The group $\cS_r$ normalizes 
$i_{\xi_r} (L_r^{\times}) \cdot (U_{\mathfrak{I}}^1 \times U_D^1 )^1$ 
by Proposition \ref{gda}. 
We put 
\[
 H_r =(U_{\mathfrak{I}}^1 \times U_D^1 )^1 
 \cdot \cS_r \subset G. 
\]
Then $H_r$ acts on $\ol{\fX}_r$ by Lemma \ref{SLact} 
and the proof of Proposition \ref{gda}. 

\begin{prop}
The subgroup $H_r \subset G^0$ is the stabilizer of 
$\cX_r$ in $\cM^{(0)}_{\infty,\ol{\eta}}$. 
\end{prop}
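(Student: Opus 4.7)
The inclusion $H_r \subset \mathrm{Stab}(\cX_r)$ is already in hand: the subgroup $\cS_r$ stabilizes $\cX_r$ by Lemma \ref{SLact}, while $(U_{\fI}^1 \times U_D^1)^1$ does by Proposition \ref{gda}, and together they generate $H_r$. For the reverse inclusion, let $(g,d,\sigma) \in G^0$ stabilize $\cX_r$; the task is to show $(g,d,\sigma) \in H_r$.

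The plan is to reduce modulo $H_r$ step by step. First, by multiplying by a suitable power of $\bfg_r \in H_r$ (Proposition \ref{frob}), we may arrange that $n_\sigma$ is any prescribed value; combining this with Lemma \ref{vgd} applied at the point $\xi_r^0$, we may assume $(g,d) \in \fI^\times \times \cO_D^\times$ and $n_\sigma = 0$. The condition $(g,d,\sigma) \in G^0$ then gives $\gamma(g,d)^{-1} a_K(\sigma) = 1$, so the component $\alpha = 1$ is preserved. Next, we apply $(g,d,\sigma)$ to $\xi_r^0$ and compute, using \eqref{x0} and \eqref{xx2}, the coordinates $\bom{x}_i(\xi_r^0 \cdot (g,d,\sigma))$. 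The defining inequalities \eqref{defcX} of $\cX_r$, evaluated on this image, translate into valuation estimates for $\sigma^{-1}(\bom{\theta}_r) - \bom{\theta}_r$ and $\sigma^{-1}(\bom{\lambda}_r) - \bom{\lambda}_r$.

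The heart of the argument is to show that these estimates force $\sigma \in W_{F_r}$. Using the equations \eqref{defthelam}, sufficient approximation of $\bom{\theta}_r$ under $\sigma$ forces sufficient approximation of $\bom{\eta}_r$, hence of $\bom{\xi}_{r,1}^{q-1}$, which in turn pins down $\sigma^{-1}(\varphi_r')/\varphi_r'$ modulo a high enough valuation to lie in $\mu_{n_1}(K)$; this is precisely the statement $\sigma \in W_{F_r}$. Once $\sigma \in W_{F_r}$, the CM point $\xi_{r,\sigma}$ is defined, and by Lemma \ref{CMtrans} there exists $(g_0,d_0,1) \in G^0$ with $\xi_{r,\sigma} \cdot (g_0,d_0,1) = \xi_r$, giving an element $\bfs_\sigma := (g_0, \varphi_{D,r}^{-n_\sigma} d_0, \sigma) \in \cS_r \subset H_r$. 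The product $(g,d,\sigma) \cdot \bfs_\sigma^{-1}$ now lies in $\iGL_n(K) \times D^\times$ and still stabilizes $\cX_r$; by Proposition \ref{gda} it lies in $i_{\xi_r}(L_r^\times) \cdot (U_{\fI}^1 \times U_D^1)^1 \subset H_r$, whence $(g,d,\sigma) \in H_r$.

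The principal obstacle is the middle step: quantitatively controlling how the defining inequalities \eqref{defcX} constrain the Galois element $\sigma$. One has to verify that the required valuation bound on $\sigma^{-1}(\bom{\theta}_r) - \bom{\theta}_r$, coming from the $n-1 \leq i \leq n$ line of \eqref{defcX}, is strong enough to force $\sigma \in W_{F_r}$ and not merely $\sigma \in W_{E_r}$; this uses the precise exponent $1/(nq^{n-1}(p^e+1))$ and the relation $n_1 = (n,p^m-1)$ in the definition of $F_r$. The computations parallel those carried out in the proofs of Proposition \ref{LT} and Lemma \ref{SLact}, so no new geometric input is needed, only careful bookkeeping of valuations.
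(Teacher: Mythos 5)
Your overall plan is the same as the paper's: reduce to $(g,d) \in \fI^\times \times \cO_D^\times$ via Lemma~\ref{vgd} and Proposition~\ref{frob}, show $\sigma \in W_{F_r}$ from the defining inequalities \eqref{defcX}, multiply by an element of $\cS_r$ to kill $\sigma$, and finish with Proposition~\ref{gda}. But the middle step, which you yourself flag as the ``principal obstacle,'' has a genuine gap as you've sketched it.

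You describe the chain as: the estimate on $\sigma^{-1}(\bom{\theta}_r) - \bom{\theta}_r$ forces an estimate on $\bom{\eta}_r$, which ``pins down $\sigma^{-1}(\varphi_r')/\varphi_r'$ modulo a high enough valuation to lie in $\mu_{n_1}(K)$.'' That chain is too short. From \eqref{defthelam}, the $\bom{\theta}_r$ estimate only forces $\bigl(\sigma^{-1}(\bom{\eta}_r)/\bom{\eta}_r\bigr)^{p^e-1}$ to be close to $1$, hence $\sigma^{-1}(\varphi_r')/\varphi_r'$ to be a root of unity in $\mu_{p^e-1}(K^{\rmur})\cap \mu_{n'}(K^{\rmac}) = \mu_{\gcd(p^e-1,\,n')}$. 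This group is in general strictly larger than $\mu_{n_1}(K) = \mu_{\gcd(p^m-1,\,n')}$ (recall $m = \gcd(e,f)$ and $p^m-1 \mid p^e-1$), so you cannot yet conclude $\sigma \in W_{F_r}$. The missing ingredient is the \emph{other} family of constraints in \eqref{defcX}: the first line forces $a_{i,i}^{(0)}/a_{i+1,i+1}^{(0)} = d_0^{q-1}$, so $d_0^{q-1} \in \mu_{q-1}(K)$ (this is equation \eqref{aid02} in the paper). Feeding this into the estimate from the second line of \eqref{defcX} (which involves $\xi^0_{r,n}$ and hence both $\varphi_r'$ and $\bom{\theta}_r$) yields $\sigma^{-1}(\varphi_r')/\varphi_r' \in \mu_{q-1}(K)$ in addition to the $\bom{\theta}_r$ estimate. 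Only then does the identity $\mu_{p^e-1}(K^{\rmur}) \cap \mu_{q-1}(K) = \mu_{p^m-1}(K)$ kick in and give $\sigma^{-1}(\varphi_r')/\varphi_r' \in \mu_{p^m-1}(K) \cap \mu_{n'} = \mu_{n_1}(K)$, i.e.\ $\sigma \in W_{F_r}$. You need to add the derivation of the $\mu_{q-1}(K)$ constraint from the diagonal-entry matching and then invoke this group-theoretic intersection explicitly; neither falls out of the $\bom{\theta}_r$--$\bom{\eta}_r$ chain alone.

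One minor remark: the paper does not normalize to $n_\sigma = 0$; it simply reduces $(g,d)$ to $\fI^\times \times \cO_D^\times$ and carries $\sigma$ along, and at the end says ``we may assume $\sigma=1$ by Lemma~\ref{SLact}'' rather than forming the explicit product with $\bfs_\sigma^{-1}$. Both are fine; this is a harmless stylistic difference.
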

\begin{proof}
Assume that 
$(g,\varphi_{D,r}^{-n_{\sigma}} d,\sigma ) \in G^0$  stabilizes $\cX_r$. 
It suffices to show that 
\[
 (g,\varphi_{D,r}^{-n_{\sigma}} d,\sigma ) \in H_r. 
\]
By Lemma \ref{vgd}, 
we have 
$(g,d ) \in (\varphi_{M,r},\varphi_{D,r})^l (\fI^{\times} \times \cO_D^{\times} )$. 
Hence, 
we may assume that 
$(g,d ) \in \fI^{\times} \times \cO_D^{\times}$ 
by Proposition \ref{frob} \ref{enu:actgr}. 

First, we show that $\sigma \in W_{F_r}$. 
We write 
$g=(a_{i,j})_{1 \leq i,j \leq n} \in \fI^{\times}$, 
$a_{i,j}=
\sum_{l=0}^{\infty}a_{i,j}^{(l)}\varpi_r^l$ 
and 
$d^{-1}=\sum_{i=0}^{\infty}d_i \varphi_{D,r}^i$ 
as in the proof of Lemma \ref{SLact}.  
Since 
$(g,\varphi_{D,r}^{-n_{\sigma}} d,\sigma )$ 
stabilizes $\cX_r$, 
we have 
\begin{align}
 \frac{a_{i,i}^{(0)}}{a_{i+1,i+1}^{(0)}} =d_0^{q-1} \quad 
 \textrm{for $1 \leq i \leq n-1$}, \label{aid02} \\ 
 \frac{a_{n,n}^{(0)} d_0 \sigma^{-1}(\xi_{r,n}^0)}{\xi_{r,n}^0} 
 \equiv 1 
 \mod\!_{\geq}\, \frac{1}{nq^{n-1}(p^e +1)} \label{adxin} 
\end{align}
by \eqref{defcX}, \eqref{x0}, \eqref{xx2} and 
$\xi_{r,i}=\xi_{r,i+1}^q$. 
By taking the $p^e q^{n-1}(q-1)$-st power of \eqref{adxin}, 
we see that 
\begin{equation}\label{dsigphi'}
 d_0^{p^e q^{n-1}(q-1)} 
 \frac{\sigma^{-1}(\varphi_{r}')}{\varphi_{r}'} 
 \equiv \biggl( 
 \frac{1+\theta_r}{1+\sigma^{-1} (\theta_r)} \biggr)^{p^e(q-1)} 
 \mod\!_{\geq}\, \frac{p^e}{n(p^e +1)}.     
\end{equation}
This implies that 
the left hand side of 
\eqref{dsigphi'} is equal to $1$. 
Hence we have 
$\sigma^{-1} (\varphi_r')/\varphi_r' \in \mu_{q-1}(K)$ and 
$\sigma^{-1} (\theta_r) \equiv \theta_r \mod\!_{\geq}\, 1/(n(p^e +1))$, 
since $d_0^{q-1} \in \mu_{q-1}(K)$ by \eqref{aid02}. 
These happen only if 
$\sigma \in W_{F_r}$ by the proof of Lemma \ref{aprbyCM} 
and $\mu_{p^e-1}(K^{\rm ur}) \cap \mu_{q-1}(K)=\mu_{p^m-1}(K)$. 
Since $\sigma \in W_{F_r}$, 
we may assume that $\sigma =1$ 
by Lemma \ref{SLact}. 
Then $(g,d,1) \in H_r$ by 
Proposition \ref{gda}. 
\end{proof}

\section{Artin--Schreier variety}\label{CompAS}
\subsection{Tate conjecture}\label{ssec:Tate}
Let $m$ be a positive integer such that $\bF_{p^m} \subset \bF_q$. 
Let $N$ be a positive even integer. 
We put $n_0 =N/2$. 
We consider the affine smooth variety 
$X_{N,\bF_q}$ over $\bF_q$ defined by 
\[
 z^{p^m}-z=\sum_{i=1}^{n_0} u_{2i-1} u_{2i} \quad \textrm{in} \  \bA^{N+1}_{\bF_q} . 
\]
Let $X_{N}$ be the base change of $X_{N,\bF_q}$ to 
$\overline{\mathbb{F}}_q$. 
For an integer $i \geq 0$, 
we simply write $\mathbb{A}^i$ for the affine space 
$\mathbb{A}_{\ol{\bF}_q}^i$. 

\begin{rem}\label{rem:XQN}
Let $Q(y_1,\ldots,y_N)$ 
be any non-degenerate quadratic form on $\mathbb{A}^N$. 
Then the affine smooth 
variety over $\overline{\mathbb{F}}_q$ 
defined by 
\[
 z^{p^m}-z=Q(y_1,\ldots,y_N) \quad \textrm{in} \  \bA^{N+1} 
\]
is isomorphic to $X_N$ 
by \cite[XII, Proposition 1.2]{SGA7-2}. 
\end{rem}

For each $\zeta \in \mathbb{F}_{p^m}^{\times}$, 
we consider the homomorphism 
\[
 p_{\zeta} \colon \mathbb{F}_{p^m}
 \to \mathbb{F}_p;\ 
 x \mapsto 
 \Tr_{\mathbb{F}_{p^m}/\mathbb{F}_p}(\zeta^{-1} x) . 
\]
Then, we consider the quotient 
$X_{N,\zeta}=X_N/\ker p_{\zeta}$. 
Note that 
the quotient $X_{N,\zeta}$ depends only on 
the class $[\zeta] \in \bF_{p^m}^{\times}/\bF_p^{\times}$ of $\zeta$. 
The variety $X_{N,\zeta}$ has the defining equation 
\begin{equation}\label{fundd}
 \zeta ( z_{\zeta}^p -z_{\zeta} )=\sum_{i=1}^{n_0} u_{2i-1} u_{2i} 
 \quad \textrm{in $\mathbb{A}^{N+1}$}, 
\end{equation}
where the relation between $z$ and $z_{\zeta}$ 
is given by $z_{\zeta} =\sum_{i=0}^{m-1}(\zeta^{-1} z)^{p^i}$. 
Let $\ell \neq p$ be a prime number. 
For a topological abelian group $A$, 
let $A^{\vee}$ denote the set of the smooth characters 
$A \to \ol{\bQ}_{\ell}^{\times}$. 
Let $\cL_{\psi}$ be the Artin--Schreier 
$\ol{\bQ}_{\ell}$-sheaf on 
$\bA^1$ associated to $\psi \in \bF_{p^m}^{\vee}$, 
which is $\fF (\psi)$ in the notation of 
\cite[Sommes trig.\ 1.8 (i)]{DelCoet}. 
For a polynomial $f \in \ol{\bF}_q [x_1, \ldots, x_l]$, 
let $\cL_{\psi}(f)$ denote the pullback of $\cL_{\psi}$ 
under $f \colon \bA^l \to \bA^1$. 

\begin{lem}\label{AS0}
We have an isomorphism 
\[
 \bigoplus_{[\zeta] \in \mathbb{F}_{p^m}^{\times}/\bF_p^{\times}}
 H^{N}(X_{N,\zeta},\overline{\mathbb{Q}}_{\ell})
 \simeq 
 H^N(X_N,\overline{\mathbb{Q}}_{\ell})
\]
induced by the pullbacks 
and 
$\dim H^{N}(X_{N,\zeta},\overline{\mathbb
{Q}}_{\ell})=p-1$. 
\end{lem}
\begin{proof}
For $\psi \in \bF_{p^m}^{\vee} \setminus \{ 1 \}$, we have 
\[
 H_{\mathrm{c}}^{i} (\bA^2, \cL_{\psi}(xy)) = 
 \begin{cases}
 \overline{\mathbb{Q}}_{\ell}(-1) & \textrm{if $i=2$,}\\ 
 0 & \textrm{otherwise} 
 \end{cases}
\]
by \cite[Proposition 1.2.2.2]{LauTFcW} as in the proof of 
\cite[Lemma 2.1]{ITGeomHW}. 
Hence, by the K\"{u}nneth formula, 
we have isomorphisms 
\[
 H_{\mathrm{c}}^{N}(X_N,\overline{\mathbb{Q}}_{\ell}) 
 \simeq 
 \bigoplus_{\psi \in \mathbb{F}_{p^m}^{\vee}
 \setminus\{1\}} 
 H_{\mathrm{c}}^{N} \left( \bA^N,\cL_{\psi}\left( \sum_{i=1}^{n_0} u_{2i-1} u_{2i} \right) \right) 
 \simeq 
 \bigoplus_{\psi \in \mathbb{F}_{p^m}^{\vee}
 \setminus\{1\}} \psi 
\]
as $\mathbb{F}_{p^m}$-representations. 
By Poincar\'{e} duality, we have an isomorphism 
\[
 H^{N}(X_N,\overline{\mathbb{Q}}_{\ell})
 \simeq \bigoplus_{\psi \in \mathbb{F}_{p^m}^{\vee}
 \setminus\{1\}} \psi 
\]
as $\mathbb{F}_{p^m}$-representations. 
Let 
$\psi' \colon \mathbb{F}_p \hookrightarrow \overline{\mathbb{Q}}_
{\ell}^{\times}$ be any non-trivial character. 
Then, for each $\psi \in \mathbb{F}_{p^m}^{\vee} \setminus \{1\}$, 
there exists a unique element $\zeta \in 
\mathbb{F}_{p^m}^{\times}$ such that 
$\psi=\psi' \circ p_{\zeta}$.
Hence, 
we know that 
\[
 H^{N}(X_{N,\zeta},\overline{\mathbb{Q}}_{\ell})[\psi'] 
 =H^{N}(X_N,\overline{\mathbb{Q}}_{\ell})[\psi]
 \simeq \psi\] 
as $\mathbb{F}_{p^m}$-representations. 
Therefore, the required assertion follows. 
\end{proof}

Consider the fibration
\[
 \pi_{\zeta} \colon X_{N,\zeta}
 \to \mathbb{A}^{n_0};\ 
 (z_{\zeta}, (u_i)_{1 \leq i \leq N})
 \mapsto 
 ((u_{2i})_{1 \leq i \leq n_0}).
\]
Let $\mathbf{0}$ denote the origin of $\mathbb{A}^{n_0}$. 
The inverse image 
$\pi_{\zeta}^{-1}(\mathbf{0})$ has $p$ connected components. 
For $a \in \bF_p$, 
we define $Z_{\zeta}^a$ 
to be the connected component of 
$\pi_{\zeta}^{-1}(\mathbf{0})$ 
defined by $z_{\zeta}=a$. 
We know that each $Z_{\zeta}^a$ 
is isomorphic to 
the affine space of dimension $n_0$. 
Let 
\[
 \mathrm{cl} \colon 
 \mathit{CH}_{n_0}(X_{N,\zeta}) \to 
 H^{N}(X_{N,\zeta},\overline{\mathbb{Q}}_{\ell}(n_0)) 
\]
be the cycle class map. 
 
\begin{lem}\label{AS2}
\begin{enumerate}
\item 
The fibration 
$\pi_{\zeta} \colon X_{N,\zeta} \to \mathbb{A}^{n_0}$ 
is an affine bundle 
over $\mathbb{A}^{n_0} \setminus \{ \mathbf{0} \}$. 
\item
The cohomology group $H^N(X_{N,\zeta},\overline{\mathbb{Q}}_{\ell}(n_0))$ 
is generated by 
the cycle classes 
$\mathrm{cl}([Z_{\zeta}^a])$ for $a \in \bF_p$ 
with the relation 
$\sum_{a \in \bF_p} \mathrm{cl}([Z_{\zeta}^a]) =0$. 
\end{enumerate}
\end{lem}
\begin{proof}
For $1 \leq i \leq n_0$, let $U_i$ be the open subscheme of 
$\mathbb{A}^{n_0}$ defined by the condition 
that the $i$-th coordinate is not zero. 
Then $\{ U_i \}_{1 \leq i \leq n_0}$ is a covering of 
$\mathbb{A}^{n_0} \setminus \{ \mathbf{0} \}$. 
We can see that 
$\pi_{\zeta}$ is a trivial affine bundle on 
each $U_i$ by \eqref{fundd}. 
Hence the first claim follows. 

We set $U=\pi_{\zeta}^{-1}(\mathbb{A}^{n_0} \setminus \{ \mathbf{0} \})$. 
We have the long exact sequence 
\[
 H^{N-1}(U,\overline{\mathbb{Q}}_{\ell}) \to 
 H^{N}_{\pi_{\zeta}^{-1}(\mathbf{0})}
 (X_{N,\zeta},\overline{\mathbb{Q}}_{\ell}) 
 \simeq \overline{\mathbb{Q}}_{\ell}(-n_0)^{\oplus p}
 \to 
 H^{N}(X_{N,\zeta},\overline{\mathbb{Q}}_{\ell})
 \to 
 H^N(U,\overline{\mathbb{Q}}_{\ell})   
\]
and $H^N(U,\overline{\mathbb{Q}}_{\ell})
\simeq H^N(\mathbb{A}^{n_0} \setminus \{ \mathbf{0} \},\overline{\mathbb{Q}}_{\ell})=0$,
which follows from the first claim. 
Therefore, 
$H^N(X_{N,\zeta},\overline{\mathbb{Q}}_{\ell}(n_0))$ 
is generated by the cycle classes 
$\mathrm{cl}([Z_{\zeta}^a])$ for $a \in \bF_p$. 
On the other hand, we have 
$\sum_{a \in \bF_p} \mathrm{cl}([Z_{\zeta}^a]) =0$, 
since $\sum_{a \in \bF_p} [Z_{\zeta}^a] =0$ in 
$\mathit{CH}_{n_0}(X_{N,\zeta})$. 
Since 
$\dim H^N(X_{N,\zeta},\overline{\mathbb{Q}}_{\ell}(n_0))=p-1$ 
by Lemma \ref{AS0}, we obtain the claim. 
\end{proof}

\begin{cor}
The Tate conjecture in \cite[7.13]{JanMixK} 
holds for the variety $X_{N,\bF_q}$. 
\end{cor}
\begin{proof}
By Lemma \ref{AS0}, Lemma \ref{AS2} 
and the commutativity of cycle maps and pullbacks under 
$X_{N} \to X_{N,\zeta}$, 
we have \cite[7.13 Conjecture (A), (B)]{JanMixK} for $X_{N,\bF_q}$
and the equality 
\[
 H^{N}(X_{N},\overline{\mathbb{Q}}_{\ell}(n_0))^{\Gal(\ol{\bF}_q/\bF_q)} 
 =H^{N}(X_{N},\overline{\mathbb{Q}}_{\ell}(n_0)). 
\]
Then the $q$-th geometric Frobenius in $\Gal(\ol{\bF}_q/\bF_q)$ acts on 
$H^{N}(X_{N},\overline{\mathbb{Q}}_{\ell})$ by $q^{n_0}$. 
Hence \cite[7.13 Conjecture (C)]{JanMixK} for $X_{N,\bF_q}$ 
also follows. 
\end{proof}

\subsection{Action on cohomology}
In this section, we assume that $p=2$. 
Let $n \geq 4$ be an even integer. 
Let $m =\gcd (e,f)$ as in Subsection \ref{RedAff}. 
We consider the affine smooth variety $X$ of dimension $n-2$
defined by 
\[
 z^{2^m}-z=\sum_{1 \leq i \leq j \leq n-2} y_i y_j
\quad \textrm{in $\mathbb{A}^{n-1}$}.
\]
We take $\zeta_3 \in \ol{\bF}_q \setminus \{ 1 \}$ such that $\zeta_3^3=1$. 
Then, we define $u_1, \ldots , u_{n-2}$ by 
\begin{align*}
 u_{4i+1} &= \zeta_3 y_{4i+1} + \zeta_3^{-1} y_{4i+2} 
 + \sum_{j=4i+3}^{n-2} y_j, \quad 
 u_{4i+2} = \zeta_3^{-1} y_{4i+1} + \zeta_3 y_{4i+2} 
 + \sum_{j=4i+3}^{n-2} y_j, \\ 
 u_{4i+3} &= y_{4i+3} + \sum_{j=4i+5}^{n-2} y_j, \quad 
 u_{4i+4} = y_{4i+4} + \sum_{j=4i+5}^{n-2} y_j . 
\end{align*}
Then the variety $X$ is isomorphic to the affine variety $X_{n-2}$ 
defined by 
\begin{equation*}
 z^{2^m}-z=\sum_{i=1}^{n_0} u_{2i-1} u_{2i} 
\quad \textrm{in $\mathbb{A}^{n-1}$} , 
\end{equation*}
where $n_0 = (n-2)/2$. 
For $\zeta \in \bF_{2^m}^{\times}$, 
we simply write $X_{\zeta}$ for the variety 
$X_{n-2,\zeta}$, which is defined in Subsection \ref{ssec:Tate} 
where $N=n-2$. Recall that $X_{\zeta}$ 
has the defining equation 
\[
 \zeta ( z_{\zeta}^2 -z_{\zeta} )=\sum_{i=1}^{n_0} u_{2i-1} u_{2i} 
 \quad \textrm{in $\mathbb{A}^{n-1}$}. 
\]

For $a \in \bF_2$, 
we consider the other $n_0$-dimensional 
cycle $Z'^a_{\zeta}$ in $X_{\zeta}$ 
defined by 
\begin{align*}
 &u_1 =0, \quad 
 u_{4i-1}=u_{4i+2}, \quad u_{4i}=u_{4i+1} \quad 
 \textrm{for $1 \leq i \leq [(n_0 -1)/2]$},\\ 
 &u_{n-3}=u_{n-2}+1 \quad \textrm{if $e=1$}, 
 \quad 
 z_{\zeta}=a +\varepsilon_1 u_{n-2}, 
\end{align*}
where $\varepsilon_1$ is defined at \eqref{eq:ep1}. 

\begin{prop}\label{AS1}
For $\zeta \in \mathbb{F}_{2^m}^{\times}$ and 
$a \in \bF_2$, we have 
\[
 \left[Z_{\zeta}^a \right]=(-1)^{n_0}
 \left[Z'^a_{\zeta}\right] \quad 
 \textrm{in $\mathit{CH}_{n_0}(X_{\zeta})$}. 
\]
\end{prop}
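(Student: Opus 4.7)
The plan is to construct a chain of $n_0+1$ intermediate $n_0$-dimensional cycles $Z^{\iota,k}\subset Y_\zeta$ ($0\le k\le n_0$) with $Z^{\iota,0}=Z_\zeta^\iota$ and $Z^{\iota,n_0}=Z'^{\iota}_\zeta$, and for each $k$ to exhibit an $(n_0+1)$-dimensional intermediate subvariety $N^k\subset Y_\zeta$ together with a regular function on $N^k$ whose principal divisor is $[Z^{\iota,k}]+[Z^{\iota,k+1}]$. Each such identity forces $[Z^{\iota,k}]=-[Z^{\iota,k+1}]$ in $\mathrm{CH}_{n_0}(Y_\zeta)$, and iterating $n_0$ times produces the sign $(-1)^{n_0}$.

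Introducing shorthand $a_i=u_{2i}$, $b_i=u_{2i-1}$ (with $a_0=0$) and $L=n_0-k$, I define $Z^{\iota,k}$ by $w_\zeta=\zeta\varrho^\iota$, $a_i=i\zeta$ for $1\le i\le L$, and $b_i=(n_0+1-i)\zeta$ for $L<i\le n_0$. A substitution into \eqref{reeq} uses the characteristic-two identity $a_{i-1}+a_i+\zeta=(2i-1)\zeta+\zeta=0$ whenever both $a_{i-1}$ and $a_i$ are in the fixed pattern, together with the combinatorial identity
\[
\binom{L}{2}+\binom{k+1}{2}+L(k+1)=\binom{n_0+1}{2}=N_0,
\]
to show the defining equation of $Y_\zeta$ is automatically satisfied; hence $Z^{\iota,k}\cong\mathbb{A}^{n_0}$ sits in $Y_\zeta$, and the endpoints agree with $Z_\zeta^\iota$ and $Z'^\iota_\zeta$ by inspection.

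I then let $N^k\subset Y_\zeta$ be cut out by the same equations as $Z^{\iota,k}$ (equivalently, as $Z^{\iota,k+1}$) except that the three constraints $w_\zeta=\zeta\varrho^\iota$, $a_L=L\zeta$, and $b_L=(k+1)\zeta$ are dropped. The key computation, extending the telescoping of the previous paragraph, is that on $N^k$ the $Y_\zeta$-equation collapses to
\[
(w_\zeta-\zeta\varrho^+)(w_\zeta-\zeta\varrho^-)=(a_L-L\zeta)\bigl(b_L-(k+1)\zeta\bigr).
\]
Granting this, $N^k$ is smooth of dimension $n_0+1$ (the partial $\partial/\partial w_\zeta$ of the equation equals the nonzero constant $\zeta$), and is isomorphic to $\mathbb{A}^{n_0-1}$ times the affine surface $WW'=\alpha\beta$, where $W=w_\zeta-\zeta\varrho^\iota$, $W'=W+\zeta$, $\alpha=a_L-L\zeta$, $\beta=b_L-(k+1)\zeta$. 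On the surface factor, $W$ is regular with zero divisor $\{W=\alpha=0\}+\{W=\beta=0\}$, each component of multiplicity one (since $W'=\zeta$ is a unit along each line, the local equation $W=\alpha\beta/W'$ gives $W\sim\alpha$ generically on the first line and $W\sim\beta$ on the second). Pulling back and using that principal divisors vanish in the Chow group yields $[Z^{\iota,k}]+[Z^{\iota,k+1}]=0$ in $\mathrm{CH}_{n_0}(N^k)$, and hence in $\mathrm{CH}_{n_0}(Y_\zeta)$ via pushforward.

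The main obstacle will be the algebraic collapse asserted in the third paragraph: the right-hand side of \eqref{reeq} is a sum of $n_0$ cross-terms $b_i(a_{i-1}+a_i+\zeta)$, which after substituting the $N^k$-constraints must reduce to the single clean product $(a_L-L\zeta)(b_L-(k+1)\zeta)$ modulo the quadratic $(w_\zeta-\zeta\varrho^+)(w_\zeta-\zeta\varrho^-)$. The cancellations are delicate: the $i<L$ terms vanish by the characteristic-two identity, the $i>L$ terms produce linear forms in the free $a_j$ whose coefficients $(n_0+1-j)+(n_0-j)=2n_0+1-2j$ are odd and combine with $\zeta\sum_j a_j$ to cancel in characteristic two except for the $a_L$ coefficient, and the leftover constants regroup via the $N_0$-identity into the factored quadratic. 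Once this key identity is checked, the principal-divisor argument and the $n_0$-fold iteration are routine.
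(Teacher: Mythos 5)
Your proof is correct and implements the same overall strategy as the paper: a chain of $n_0+1$ interpolating $n_0$-dimensional cycles from $Z_\zeta^\iota$ to $Z'^\iota_\zeta$, with each consecutive pair summing to zero in $\mathrm{CH}_{n_0}(Y_\zeta)$ because that sum is the divisor of $w_\zeta-\zeta\varrho^\iota$ on an intermediate $(n_0+1)$-dimensional subvariety, and the sign $(-1)^{n_0}$ comes from iterating $n_0$ times. What differs is the choice of interpolating chain. The paper's $Z^+_{\zeta,j}$ fixes the odd-indexed $u_{2i-1}=(n_0+1-i)\zeta$ for $i\le j$ and imposes the pairwise relation $u_{2i-2}+u_{2i}=\zeta$ for $i>j$, while your $Z^{\iota,k}$ fixes the even-indexed $u_{2i}=i\zeta$ for $i\le L=n_0-k$ and the odd-indexed $u_{2i-1}=(n_0+1-i)\zeta$ for $i>L$; each step frees exactly one $a_L=u_{2L}$ and pins exactly one $b_L=u_{2L-1}$. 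Your choice makes each $N^k$ manifestly a product of $\mathbb{A}^{n_0-1}$ with the explicit affine surface $W(W+\zeta)=\alpha\beta$, so the divisor and multiplicity-one computations are especially transparent, whereas the paper works more implicitly on $Y_{\zeta,j}$ and uses the partial-sum identity \eqref{sumuodd} to regroup the constant. I checked the collapse you flag as the main obstacle: on $N^k$ the $Y_\zeta$-equation does reduce to $(w_\zeta-\zeta\varrho^+)(w_\zeta-\zeta\varrho^-)=(a_L-L\zeta)\bigl(b_L-(k+1)\zeta\bigr)$, with the linear terms in the free $a_j$ cancelling in characteristic two precisely as you indicate and the constants regrouping via $\binom{L}{2}+\binom{k+1}{2}+L(k+1)=N_0$, so the argument goes through.
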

\begin{proof}
We show that 
$[Z_{\zeta}^a ]-(-1)^{n_0} [Z'^a_{\zeta} ]$ 
is rationally equivalent to zero. 
For $1 \leq i \leq [(n_0 +1)/2]$, 
let $X_{\zeta,i}$ 
be the $(n_0 +1)$-dimensional closed
subvariety of $X_{\zeta}$ defined by 
\begin{align*}
 &u_{4j} =0 \quad \textrm{for $1 \leq j \leq [n_0/2]$}, \\ 
 &u_{4j-3} =0 \quad \textrm{for $1 \leq j \leq i-1$}, \quad 
 u_{4j-2} =0 \quad \textrm{for $i+1 \leq j \leq [(n_0+1)/2]$}, 
\end{align*}
and let $Z^a_{\zeta,i}$ be the $n_0$-dimensional
cycle on $X_{\zeta,i}$ 
defined by $u_{4i-3}=0$ and $z_{\zeta}=a$. 
We put $Z^a_{\zeta,0}=Z^a_{\zeta}$. 
Then we have 
\[
 \mathrm{div} (z_{\zeta}-a) 
 =[Z^a_{\zeta,i-1}]+[Z^a_{\zeta,i}]
\] 
in $\mathit{CH}_{n_0}(X_{\zeta,i})$ for $1 \leq i \leq [(n_0 +1)/2]$, 
since we have $\zeta ( z_{\zeta}^2 -z_{\zeta} )=u_{4i-3} u_{4i-2}$ 
on $X_{\zeta,i}$. 
For $1 \leq i \leq [(n_0-1)/2]$, 
let $X'_{\zeta,i}$ 
be the $(n_0 +1)$-dimensional closed 
subvariety of $X_{\zeta}$ defined by 
\begin{alignat*}{2}
 &u_1=0, \quad u_{4j-1} =u_{4j+2} ,\quad u_{4j} =u_{4j+1} \quad 
 \textrm{for $1 \leq j \leq i-1$}, \quad 
 u_{4i}=u_{4i+1}, \\ 
 &u_{4j}=0 \quad 
 \textrm{for $i+1 \leq j \leq [n_0/2]$}, \quad 
 u_{4j+1} =0 \quad 
 \textrm{for $i+1 \leq j \leq [(n_0 -1)/2]$}, 
\end{alignat*}
and let $Z'^a_{\zeta,i}$ be the $n_0$-dimensional
cycle on $X'_{\zeta,i}$ 
defined by $u_{4i-1}=u_{4i+2}$ and $z_{\zeta}=a$. 
We put $Z'^a_{\zeta,0}=Z^a_{\zeta,[(n_0+1)/2]}$. 
Then we have 
\begin{equation*}
 \mathrm{div}(z_{\zeta}-a)=
 [Z'^a_{\zeta,i-1}] + [ Z'^a_{\zeta,i} ] 
\end{equation*}
in $\mathit{CH}_{n_0}(X'_{\zeta,i})$ 
for $1 \leq i \leq [(n_0-1)/2]$, 
since we have 
\[
 \zeta ( z_{\zeta}^2 -z_{\zeta} )=u_{4i}(u_{4i-1}+u_{4i+2}) 
\]
on $X'_{\zeta,i}$. 
If $e \geq 2$, then 
$Z'^a_{\zeta,[(n_0-1)/2]}=Z'^a_{\zeta}$, and 
the claim follows. 
Assume that $e=1$. Then $m=1$. 
Let $X''_{\zeta}$ 
be the $(n_0 +1)$-dimensional closed 
subvariety of $X_{\zeta}$ defined by 
\[
 u_1=0, \quad u_{4j-1} =u_{4j+2} ,\quad u_{4j} =u_{4j+1} \quad 
 \textrm{for $1 \leq j \leq (n_0 -2)/2$}. 
\]
Then we have 
\[
  \mathrm{div}(z_{\zeta}-u_{n-2}-a)=
 [Z'^a_{\zeta,[(n_0-1)/2]}] + [ Z'^a_{\zeta} ] 
\]
in $\mathit{CH}_{n_0}(X''_{\zeta})$, 
since we have 
\[
 (z-u_{n-2})(z-u_{n-2}-1)=u_{n-2}(u_{n-3}+u_{n-2}+1) 
\] 
on $X''_{\zeta}$. 
Therefore, we obtain the claim. 
\end{proof}

\begin{cor}\label{AS3}
Assume that $n \geq 4$.
Let $g$ be the automorphism of $X$ defined by 
\[
 (z,(y_i)_{1 \leq i \leq n-2}) 
 \mapsto 
 \left( z +\varepsilon_1 (y_{n-2} +1 ), 
 \sum_{i=1}^{n-3} y_i  
 +\varepsilon_1, 
 ( y_{i-1} + y_{n-2} +\varepsilon_1 )_{2 \leq i \leq n-2} \right) . 
\]
Then, $g^\ast $ acts on $H^{n-2}(X,\overline{\mathbb{Q}}_{\ell})$ by 
$-1$. 
\end{cor}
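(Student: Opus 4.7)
Proof plan. The strategy is to reduce to a scalar computation on one-dimensional cohomology summands and then to identify the scalar by pulling back an explicit cycle class, via a chain-of-divisors argument parallel to the proof of Proposition \ref{AS1}. Since the formula for $g$ shows that $g^*z - z = \varepsilon_1(y_{n-2}+1)$ is independent of $z$, the automorphism $g$ commutes with the $\mathbb{F}_{2^m}$-action on $Y$ by $z$-translation, hence descends to each quotient $Y_\zeta$ for $\zeta \in \mathbb{F}_{2^m}^\times$. By Lemma \ref{AS0} and Lemma \ref{AS2}, the space $H^{n-2}(Y_\zeta, \overline{\mathbb{Q}}_\ell)(n_0)$ is one-dimensional and generated by $\mathrm{cl}([Z_\zeta^+])$, so $g^*$ acts on it by a scalar, and the claim reduces to showing $g^*[Z_\zeta^+] = -[Z_\zeta^+]$ in $\mathrm{CH}_{n_0}(Y_\zeta)$ for every $\zeta$.

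Using $u_i = \sum_{j=i}^{n-2}y_j$, a direct computation from the formula for $g$ gives $g^* u_{2i} = u_{2i-1} + \varepsilon_1$ for $1 \leq i \leq n_0$, so that $g^{-1}(Z_\zeta^+)$ is the $n_0$-dimensional subcycle of $Y_\zeta$ cut out by the equations $u_{2i-1} = i\zeta + \varepsilon_1$ (for $1 \leq i \leq n_0$) together with a sheet-choice condition on $w_\zeta$. The resulting cycle has the same odd-index-fixing form as $Z'^{+}_{\zeta}$ from Proposition \ref{AS1}, and the constants $c_i := i\zeta + \varepsilon_1$ satisfy the characteristic-$2$ identity $c_i + c_{i+1} = \zeta$ that makes the following chain argument go through.

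For $0 \leq j \leq n_0$, introduce the $(n_0+1)$-dimensional subvariety $\widetilde{Y}_{\zeta,j} \subset Y_\zeta$ defined by imposing $u_{2i-1} = c_i$ for $1 \leq i \leq j-1$ and $u_{2i-2} + u_{2i} = \zeta$ for $j+1 \leq i \leq n_0$. On $\widetilde{Y}_{\zeta,j}$ the equation \eqref{reeq} simplifies, exactly as in the proof of Proposition \ref{AS1}, to a relation of the form
\[
 w_\zeta^2 + \zeta w_\zeta = \mathrm{const} + (u_{2j-1} - c_j)(u_{2j-2} + u_{2j} + \zeta),
\]
producing a divisor relation analogous to \eqref{123}. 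Telescoping across $j = 0, \ldots, n_0$ gives $[g^{-1}(Z_\zeta^+)] = (-1)^{n_0+1}[Z'^{+}_{\zeta}]$ in $\mathrm{CH}_{n_0}(Y_\zeta)$, and combining with Proposition \ref{AS1} (which yields $[Z'^{+}_{\zeta}] = (-1)^{n_0}[Z_\zeta^+]$) produces $g^*[Z_\zeta^+] = (-1)^{n_0+1}(-1)^{n_0}[Z_\zeta^+] = -[Z_\zeta^+]$, as required.

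The main obstacle is the sign and correction bookkeeping: one must verify that the modification $g^*(w_\zeta) - w_\zeta$, which involves an Artin-Schreier contribution depending on $u_{n-2}$, can be absorbed into a translate of $w_\zeta$ on each $\widetilde{Y}_{\zeta,j}$ without disturbing the divisor relation, and that the alternating signs across the chain accumulate precisely to $(-1)^{n_0+1}$.
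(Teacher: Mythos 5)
Your overall strategy — reduce to the one-dimensional summands $H^{n-2}(Y_\zeta)$, determine the scalar by pulling back the cycle class $\mathrm{cl}([Z_\zeta^+])$, and use the relations from Lemma \ref{AS2} and Proposition \ref{AS1} — is the paper's strategy. However, the chain of subvarieties $\widetilde{Y}_{\zeta,j}$ you introduce is superfluous: in characteristic $2$ the constants $c_i = i\zeta + \varepsilon_1$ are actually \emph{equal} to $(n_0+1-i)\zeta$, since $c_i + (n_0+1-i)\zeta = (n_0+1)\zeta + \varepsilon_1$ and $n_0+1 = n/2 = 2^{e-1}n'$ is even exactly when $\varepsilon_1 = 0$, while $\varepsilon_1 = 1$ forces $m=1$ and $\zeta = 1$. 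Thus $g^{-1}(Z_\zeta^+)$ has the \emph{same} odd-index conditions as $Z'^{\pm}_\zeta$ and is already one of these two cycles, so Proposition \ref{AS1} applies directly with no new chain. Moreover, your telescoping claim $[g^{-1}(Z_\zeta^+)] = (-1)^{n_0+1}[Z'^{+}_{\zeta}]$ is not what a chain as in the proof of Proposition \ref{AS1} actually produces: such a chain links an odd-index-fixing cycle to an even-index-fixing cycle like $Z_\zeta^{\pm}$, not to $Z'^{+}_{\zeta}$, so the sign cannot be read off the telescope as stated.

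The genuine gap is that you flag the computation of $g^*w_\zeta - w_\zeta$ as ``the main obstacle'' but never carry it out, and this is precisely where the sign of the answer is decided. A priori $g^*w_\zeta - w_\zeta$ contains an Artin--Schreier expression in $u_{n-2}$ coming from $g^*z = z+\varepsilon_1(u_{n-2}+1)$, namely $\varepsilon_1\bigl(\zeta\sum_{i=0}^{m-1}(\zeta^{-2}(u_{n-2}+1))^{2^i}+u_{n-2}\bigr)$, and is not obviously a constant. The essential observation, which is the heart of the paper's proof, is that $\varepsilon_1 = 1$ forces $e=1$, hence $m=\gcd(e,f)=1$ and $\zeta\in\mathbb{F}_2^{\times}=\{1\}$, so the bracket collapses to $(u_{n-2}+1)+u_{n-2}=1$; thus $g^*w_\zeta = w_\zeta + \varepsilon_1$ is a constant translation. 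With $\zeta=1$ one identifies $\zeta\varrho+\varepsilon_1$ with $\zeta\varrho^{-}$, giving $g^{-1}(Z'^{+}_{\zeta}) = Z_\zeta^{-}$ when $e=1$ and $=Z_\zeta^{+}$ otherwise, from which $g^*\mathrm{cl}([Z_\zeta^+]) = -\mathrm{cl}([Z_\zeta^+])$ follows by combining Lemma \ref{AS2} with Proposition \ref{AS1}. Until this $w_\zeta$-sheet computation is supplied, the ``sheet-choice condition'' in your argument is undetermined and the proof does not close.
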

\begin{proof}
Note that $g$ induces an automorphism of $X_{\zeta}$. 
The condition of $Z^0_{\zeta} \subset X_{\zeta}$ is equivalent to 
\begin{align*}
 &y_{4i -3} + \zeta_3^{-1} y_{4i-2} + \zeta_3 y_{4i-1} =0 
 \quad \textrm{for $1 \leq i \leq [n_0/2]$,} \\ 
 &y_{4i} + \zeta_3 y_{4i+1} + \zeta_3^{-1} y_{4i+2} =0 
 \quad \textrm{for $1 \leq i \leq [(n_0 -1)/2]$,} \\ 
 &\zeta_3^{-1} y_{n-3} + \zeta_3 y_{n-2} =0 \quad 
 \textrm{if $e \geq 2$,} \quad 
 y_{n-2} =0 \quad \textrm{if $e =1$}, \quad 
 z_{\zeta} =0. 
\end{align*}
For $a \in \bF_2$, 
the condition of $Z'^a_{\zeta} \subset X_{\zeta}$ is equivalent to 
\begin{align*}
 &\zeta_3 y_1 + \zeta_3^{-1} y_2  + \sum_{j=3}^{n-2} y_j =0, \\ 
 &y_{4i -1} + \zeta_3 y_{4i} + \zeta_3^{-1} y_{4i+1} =0, 
 \quad 
 y_{4i} + \zeta_3^{-1} y_{4i+1} + \zeta_3 y_{4i+2} =0 
 \quad \textrm{for $1 \leq i \leq [(n_0 -1)/2]$,} \\ 
 & y_{n-3} =y_{n-2} +1 \quad \textrm{if $e =1$}, \quad 
 z_{\zeta} =a +\varepsilon_1 y_{n-2}. 
\end{align*}
Using the above, we can check that 
\[
 g^{-1}(Z^0_{\zeta}) = 
 \begin{cases}
 Z'^1_{\zeta} & \textrm{if $e=1$}, \\
 Z'^0_{\zeta} & \textrm{otherwise}.
\end{cases}
\]
Therefore, we obtain 
\[
 g^\ast \bigl( \mathrm{cl} ([Z^0_{\zeta} ]) \bigr) 
 =(-1)^{n_0 +1} \bigl( \mathrm{cl} ([Z'^0_{\zeta} ]) \bigr) 
 =-\mathrm{cl} ([Z^0_{\zeta} ]) 
\] 
in $H^{n-2}(X_{\zeta},\overline{\mathbb{Q}}_{\ell}(n_0))$ 
using 
Lemma \ref{AS2} 
and Proposition \ref{AS1}. 
Hence, 
the claim follows from Lemma \ref{AS0}
and Lemma \ref{AS2}. 
\end{proof}

\section{Explicit LLC and LJLC}\label{sec:explicit}
\subsection{Galois representations}\label{CohGal}
Let $X$ be the affine 
smooth variety over $k^{\mathrm{ac}}$ 
defined by \eqref{vareq}. 
We define an action of $Q \rtimes \bZ$ on $X$ 
similarly to \eqref{QZactXr}.

We choose an isomorphism 
$\iota \colon \overline{\mathbb{Q}}_{\ell} \simeq \mathbb{C}$.
Let $q^{1/2} \in  \overline{\mathbb{Q}}_{\ell}$ be the 
$2$-nd root of $q$ such that 
$\iota(q^{1/2})>0$. 
For a rational number $r \in 2^{-1}\mathbb{Z}$, 
let $\overline{\mathbb{Q}}_{\ell}(r)$ be the unramified representation of 
$\Gal(k^{\mathrm{ac}}/k)$ of degree $1$, 
on which the geometric Frobenius 
$\mathrm{Frob}_q$ acts as scalar multiplication by 
$q^{-r}$.
We simply write $Q$ for the subgroup 
$Q \times \{0\} \subset Q \rtimes \mathbb{Z}$. 
We consider the morphisms 
\begin{align*}
 \Phi & \colon \mathbb{A}_{k^{\mathrm{ac}}}^{n-1} 
 \to \mathbb{A}_{k^{\mathrm{ac}}}^1 ;\ 
 (y,(y_i)_{1 \leq i \leq n-2} ) 
 \mapsto y^{p^e +1} 
 - \frac{1}{n'} \sum_{1 \leq i \leq j \leq n-2} y_i y_j , \\ 
 h_m & \colon \mathbb{A}_{k^{\mathrm{ac}}}^1 \to \mathbb{A}_{k^{\mathrm{ac}}}^1; 
 z \mapsto z^{p^m} -z . 
\end{align*}
Then we have a cartesian diagram 
\[
 \xymatrix{
 X \ar@{->}[r] \ar@{->}[d] & 
 \mathbb{A}_{k^{\mathrm{ac}}}^{n-1} 
 \ar@{->}^-{\Phi}[d] \\ 
 \mathbb{A}_{k^{\mathrm{ac}}}^1 \ar@{->}^-{h_m}[r] & 
 \mathbb{A}_{k^{\mathrm{ac}}}^1 . 
 }
\]
Using the proper base change theorem for the above cartesian diagram, 
we have a decomposition 
\begin{equation}\label{HXdec}
 H^{n-1}_{\mathrm{c}}(X,\overline{\mathbb{Q}}_{\ell}) 
 \simeq 
 \bigoplus_{\psi \in \bF_{p^m}^{\vee} \backslash \{1\}} 
 H_{\mathrm{c}}^{n-1} (\mathbb{A}_{k^{\mathrm{ac}}}^{n-1} ,
 \mathcal{L}_{\psi} (\Phi) ) , 
\end{equation}
since 
${h_m}_* \ol{\bQ}_{\ell} \simeq \bigoplus_{\psi \in \bF_{p^m}^{\vee}} 
 \mathcal{L}_{\psi}$ 
and 
$H_{\mathrm{c}}^{n-1} (\mathbb{A}_{k^{\mathrm{ac}}}^{n-1} ,
 \ol{\bQ}_{\ell} ) =0$. 
The decomposition \eqref{HXdec} is stable under the action of 
$Q \rtimes \bZ$, 
since $\bF_{p^m} \simeq \{ g(1,0,c) \mid c \in \bF_{p^m} \}$ 
in the center of $Q \rtimes \bZ$ 
acts on each direct summand 
$H_{\mathrm{c}}^{n-1} (\mathbb{A}_{k^{\mathrm{ac}}}^{n-1} ,
 \mathcal{L}_{\psi} (\Phi) )$ in \eqref{HXdec} 
by $\psi$. 
We put 
\[
 \tau_{\psi,n} = 
 H_{\mathrm{c}}^{n-1} (\mathbb{A}_{k^{\mathrm{ac}}}^{n-1} ,
 \mathcal{L}_{\psi} (\Phi) ) \biggl( \frac{n-1}{2} \biggr) 
\]
as a $Q \rtimes \bZ$-representation 
for each 
$\psi \in \bF_{p^m}^{\vee} \backslash \{1\}$. 
We write 
$\tau^0_{r,\psi}$ 
for the inflation of 
$\tau_{\psi,n}$ by $\Theta_r$ in \eqref{hom}.

\subsection{Correspondence}\label{ConsRep}
\begin{defn}\label{def:ss}
We say that 
an irreducible supercuspidal representation 
of $\iGL_n (K)$ 
is simple supercuspidal 
if its exponential Swan conductor is one. 
\end{defn}

\begin{rem}
Definition \ref{def:ss} is compatible with 
\cite[Definition 1.1]{ITsimpJL} by 
\cite[Proposition 1.3]{ITsimpJL}. 
The word 
``simple supercuspidal'' comes from 
\cite{GRAinv}. 
Our ``simple supercuspidal'' representations 
are called ``epipelagic'' in \cite{BHLepi} 
after \cite{RYEinv}. 
\end{rem}

We define $\psi_0 \in \bF_p^{\vee}$ by 
$\iota (\psi_0 (1) )=\exp (2\pi \sqrt{-1}/p)$. 
We put 
$\psi_0' =\psi_0 \circ \Tr_{\bF_{p^m}/\bF_p}$. 
We take an additive character 
$\psi_K \colon K \to \ol{\bQ}_{\ell}^{\times}$ 
such that 
$\psi_K (x) =\psi_0'(\bar{x})$ 
for $x \in \cO_K$. 
In the following,  for each triple 
$(\zeta,\chi,c) \in \mu_{q-1}(K) \times 
(k^{\times})^{\vee} \times \overline{\mathbb{Q}}_{\ell}^{\times}$, 
we define a $\textit{GL}_n(K)$-representation 
$\pi_{\zeta,\chi,c}$, a $D^{\times}$-representation $\rho_{\zeta,\chi,c}$ and 
a $W_K$-representation $\tau_{\zeta,\chi,c}$.

We use notations in Subsection \ref{ConstAff}, 
replacing $r \in \mu_{q-1}(K)$ with 
$\zeta \in \mu_{q-1}(K)$. 
We have the $K$-algebra embeddings 
\begin{align*}
 L_{\zeta} \to M_n (K) ;\ \varphi_{\zeta} \mapsto \varphi_{M,\zeta}, \quad 
 L_{\zeta} \to D ;\ \varphi_{\zeta} \mapsto \varphi_{D,\zeta}. 
\end{align*}
Set $\varphi_{\zeta,n} =n' \varphi_{\zeta}$. 
Let 
$\Lambda_{\zeta,\chi,c} \colon L_{\zeta}^{\times} U_{\mathfrak{I}}^1 
 \to \overline{\mathbb{Q}}_{\ell}^{\times}$ 
 be the character defined by
\begin{align*}
\Lambda_{\zeta,\chi,c} (\varphi_{\zeta})& =
(-1)^{n-1}c, \quad 
\Lambda_{\zeta,\chi,c} (x) =\chi (\bar{x}) \quad 
\textrm{for $x \in \mathcal{O}_{K} ^{\times}$},\\ 
 \Lambda_{\zeta,\chi,c} (x) & =
(\psi_K \circ 
 \mathrm{tr} )(\varphi_{\zeta,n}^{-1}(x-1)) \quad 
\textrm{for $x \in U_{\mathfrak{I}}^1$}.
\end{align*}
 We put 
\[
 \pi_{\zeta,\chi,c} = 
 \mathrm{c\mathchar`-Ind}_{L_{\zeta}^{\times} U_{\mathfrak{I}}^1}^{\mathit{GL}_n(K)} 
 \Lambda_{\zeta,\chi,c}. 
\]
Then, $\pi_{\zeta,\chi,c}$ 
is a simple supercuspidal representation of $\mathit{GL}_n (K)$, and 
every simple supercuspidal representation 
is isomorphic to 
$\pi_{\zeta,\chi,c}$ for a uniquely determined 
$(\zeta,\chi,c) \in \mu_{q-1}(K) \times (k^{\times})^{\vee} 
 \times \overline{\mathbb{Q}}_{\ell}^{\times}$ 
(\cf \cite[2.1, 2.2]{BHLepi}). 

Let  
$\theta_{\zeta,\chi,c} \colon L_{\zeta}^{\times} U_D ^1 
 \to \overline{\mathbb{Q}}_{\ell}^{\times}$ 
 be the character defined by 
 \begin{align*}
\theta_{\zeta,\chi,c} (\varphi_{\zeta})& =c, \quad 
\theta_{\zeta,\chi,c} (x) =\chi (\bar{x}) \quad \textrm{for 
$x \in \mathcal{O}_{K}^{\times}$}, \\ 
\theta_{\zeta,\chi,c} (d) &=
\left(\psi_K \circ 
\mathrm{Trd}_{D /K}\right)(\varphi_{\zeta,n}^{-1}(d-1)) 
\quad \textrm{for $d \in U_D ^1$}.
\end{align*}
We put 
\[
 \rho_{\zeta, \chi,c} = 
 \mathrm{Ind}_{L_{\zeta}^{\times} U_D ^1}^{D^{\times}} 
\theta_{\zeta, \chi,c}. 
\]
The isomorphism class of 
this representation does not depend 
on the choice of the 
embedding $L_{\zeta} \hookrightarrow D$.

Recall that 
$\varphi_{\zeta}'=\varphi_{\zeta}^{p^e}$ and 
$E_{\zeta} =K(\varphi_{\zeta}')$. 
Let $\phi_c \colon W_{E_{\zeta}} \to 
\overline{\mathbb{Q}}_{\ell}^{\times}$ 
be the character defined by 
$\phi_c(\sigma)=c^{n_{\sigma}}$.
Let 
$\mathrm{Frob}_p \colon k^{\times} \to k^{\times}$ be the map defined by 
$x \mapsto x^{p^{-1}}$
for $x \in k^{\times}$.
We consider the composite 
\[
 \nu_{\zeta} \colon 
 W_{E_{\zeta}}^{\mathrm{ab}} 
 \xrightarrow{\mathrm{Art}_{E_{\zeta}}^{-1}} E_{\zeta}^{\times} \to 
 \mathcal{O}_{E_{\zeta}}^{\times} 
 \xrightarrow{\mathrm{can.}} k^{\times} \xrightarrow{\mathrm{Frob}_p^e} k^{\times}, 
\]
where the second homomorphism is given by 
$E_{\zeta}^{\times}  \to \mathcal{O}_{E_{\zeta}}^{\times};\ x \mapsto 
 x {\varphi_{\zeta}'}^{-v_{E_{\zeta}}(x)}$. 
We simply 
write $\tau_{\zeta}^0$
for 
$\tau_{\zeta,\psi_0'}^0$.
We set 
\[
 \tau_{\zeta,\chi,c}^0=\tau_{\zeta}^0 \otimes 
(\chi \circ \nu_{\zeta}) \otimes \phi_c, \quad 
 \tau_{\zeta,\chi,c}=
 \mathrm{Ind}_{W_{E_{\zeta}}}^{W_K} \tau_{\zeta,\chi,c}^0. 
\]
We see that 
$\tau_{\zeta,\chi,c}^0$ is primitive by 
\cite[3.2 Proposition]{BHLepi} and \cite{ITlgsw1}. 

The following theorem 
follows from \cite{ITlgsw1} and \cite{ITsimpJL}. 

\begin{thm}\label{expJL}
Let $\mathrm{LL}$ and $\mathrm{JL}$ denote the local 
Langlands correspondence and the local 
Jacquet--Langlands correspondence for $\mathit{GL}_n(K)$ respectively. 
For $\zeta \in \mu_{q-1} (K)$, 
$\chi \in (k^{\times})^{\vee}$ and 
$c \in \overline{\mathbb{Q}}_{\ell}^{\times}$, 
we have 
$\mathrm{LL}(\pi_{\zeta,\chi,c})=\tau_{\zeta,\chi,c}$ and 
$\mathrm{JL} (\rho_{\zeta, \chi,c}) =\pi_{\zeta,\chi,c}$. 
\end{thm}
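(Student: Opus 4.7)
The plan is to reduce both identities directly to the explicit computations of the two correspondences proved in \cite{ITsimpJL} and \cite{ITlgsw1}; the theorem is essentially a matter of unwinding parametrizations and matching normalizations. The first step is to verify that $(\zeta,\chi,c)\mapsto \pi_{\zeta,\chi,c}$ gives every isomorphism class of simple supercuspidal representation of $\iGL_n(K)$ exactly once, using the parametrization in \cite{BHLepi}. Parallel bookkeeping on $D^\times$ shows that $\rho_{\zeta,\chi,c}$ is well defined up to isomorphism, as different $K$-embeddings $L_\zeta\hookrightarrow D$ are $D^\times$-conjugate.

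For the Jacquet--Langlands identity, I would invoke the explicit description of $\mathrm{JL}$ on simple supercuspidals given in \cite{ITsimpJL}. Under the bijections above, that result yields $\mathrm{JL}(\rho_{\zeta,\chi,c})=\pi_{\zeta,\chi,c}$ once one checks that the character $\Lambda_{\zeta,\chi,c}$ of $L_\zeta^\times U_{\fI}^1$ and the character $\theta_{\zeta,\chi,c}$ of $L_\zeta^\times U_D^1$ used here match the characters of \cite{ITsimpJL} up to twist. The key points to verify are the common value $c$ (with the extra sign $(-1)^{n-1}$ on the $\iGL_n$-side reflecting the difference between $\det$ and $\Nrd$ on $L_\zeta^\times$), the shared twist by $\varphi_{\zeta,n}^{-1}=(n'\varphi_\zeta)^{-1}$ on the pro-unipotent parts, and the common restriction $\chi\circ\pr_{\cO_K/k}$ on $\cO_K^\times$.

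For the local Langlands identity, I would apply the main result of \cite{ITlgsw1}, which identifies the Langlands parameter of $\pi_{\zeta,\chi,c}$ as an induction $\mathrm{Ind}_{W_{E_\zeta}}^{W_K}\sigma$ for a specific $p^e$-dimensional representation $\sigma$ of $W_{E_\zeta}$. It remains to identify $\sigma$ with $\tau^0_{\zeta,\chi,c}=\tau^0_\zeta\otimes(\chi\circ\nu_\zeta)\otimes\phi_c$. The plan is to treat the three tensor factors separately: the cohomological base piece $\tau^0_\zeta$ matches the Heisenberg-type parameter at $(\chi,c)=(1,1)$ arising from the Artin--Schreier sheaf and its associated representation of $Q\rtimes\bZ$; the tame twist $\chi\circ\nu_\zeta$ corresponds, via the Frobenius twist $\mathrm{Frob}_p^e$ built into $\nu_\zeta$, to the character $\chi$ on $\cO_K^\times$ appearing in $\Lambda_{\zeta,\chi,c}$; and the unramified twist $\phi_c$ encodes the value of $c$ at the uniformizer $\varphi_\zeta$, via the formula $\phi_c(\sigma)=c^{n_\sigma}$.

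The main obstacle is tracking normalization conventions coherently through the induction from $E_\zeta$ to $K$. The Artin reciprocity maps $\mathrm{Art}_{E_\zeta}$ and $\mathrm{Art}_K$ are normalized so that uniformizers go to lifts of geometric Frobenius; the Tate twist by $(n-1)/2$ in the definition of $\tau_{\psi,n}$ must be compatible with the fixed $q^{1/2}\in\overline{\bQ}_\ell$ coming from $\iota$; the sign $(-1)^{n-1}$ in $\Lambda_{\zeta,\chi,c}(\varphi_\zeta)$ has to be absorbed correctly into the determinant of the induced representation; and the exponent $p^e$ in the Frobenius twist defining $\nu_\zeta$ has to match the ramification index of $L_\zeta/E_\zeta$. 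Once these compatibilities are in place, both identities of the theorem follow by direct quotation of the cited results.
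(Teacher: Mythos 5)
Your proposal is correct and follows essentially the same route as the paper: the paper offers no argument beyond the sentence preceding the theorem, which simply asserts that the statement follows from \cite{ITsimpJL} and \cite{ITlgsw1}, and your plan is a detailed unwinding of exactly that citation-based reduction (matching parametrizations and normalizations of $\pi_{\zeta,\chi,c}$, $\rho_{\zeta,\chi,c}$, $\tau_{\zeta,\chi,c}$ against those sources). Your bookkeeping on the sign $(-1)^{n-1}$, the twist by $\varphi_{\zeta,n}^{-1}$, the Frobenius twist $\mathrm{Frob}_p^e$ in $\nu_\zeta$, and the degree $p^e = [L_\zeta:E_\zeta]$ of $\tau^0_{\zeta,\chi,c}$ are precisely the compatibilities one would need to check, and they are consistent with the paper's definitions.
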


\begin{defn}
We say that 
a smooth irreducible representation 
of $\iGL_n (K)$ 
is essentially simple supercuspidal 
if it is a character twist of a simple supercuspidal representation. 
\end{defn}
Let $\omega \colon K^{\times} \to \ol{\bQ}_{\ell}^{\times}$ 
be a smooth character. 
We put 
\[
 \pi_{\zeta,\chi,c,\omega} =\pi_{\zeta,\chi,c} \otimes (\omega \circ \det), 
 \quad 
 \rho_{\zeta,\chi,c,\omega} =\rho_{\zeta,\chi,c} \otimes (\omega \circ \Nrd_{D/K}), 
 \quad 
 \tau_{\zeta,\chi,c,\omega} =\tau_{\zeta,\chi,c} \otimes 
 (\omega \circ \Art_K^{-1}), 
\]
and 
\begin{align*}
 \Lambda_{\zeta,\chi,c,\omega} &=\Lambda_{\zeta,\chi,c} 
 \otimes (\omega \circ \det |_{L_{\zeta}^{\times} U_{\fI}^1}), 
 \quad
 \theta_{\zeta,\chi,c,\omega} =\theta_{\zeta,\chi,c} 
 \otimes (\omega \circ \Nrd_{D/K}|_{L_{\zeta}^{\times} U_D^1}), 
 \\
 \tau_{\zeta,\chi,c,\omega}^0 &=\tau_{\zeta,\chi,c}^0 \otimes 
 (\omega \circ \Nr_{E_{\zeta}/K} \circ \Art_{E_{\zeta}}^{-1}). 
\end{align*}
Then we have 
\[
 \pi_{\zeta, \chi,c,\omega} = 
 \mathrm{c\mathchar`-Ind}_{L_{\zeta}^{\times} U_{\fI}^1}^{\iGL_n(K)} 
 \Lambda_{\zeta, \chi,c,\omega}, \quad 
 \rho_{\zeta, \chi,c,\omega} = 
 \mathrm{Ind}_{L_{\zeta}^{\times} U_D ^1}^{D^{\times}} 
 \theta_{\zeta, \chi,c,\omega}, \quad 
 \tau_{\zeta,\chi,c,\omega}=
 \mathrm{Ind}_{W_{E_{\zeta}}}^{W_K} \tau_{\zeta,\chi,c,\omega}^0 . 
\]

\begin{cor}\label{expJLes}
We have 
$\mathrm{LL}(\pi_{\zeta,\chi,c,\omega})=\tau_{\zeta,\chi,c,\omega}$ and 
$\mathrm{JL} (\rho_{\zeta, \chi,c,\omega}) =\pi_{\zeta,\chi,c,\omega}$. 
\end{cor}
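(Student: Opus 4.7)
The plan is to deduce this from Theorem \ref{expJL} applied to the untwisted case, together with the standard twist-compatibility of both the local Langlands correspondence and the local Jacquet--Langlands correspondence.

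First, I would recall the two compatibility properties. For any smooth character $\omega \colon K^{\times} \to \ol{\bQ}_{\ell}^{\times}$ and any irreducible smooth representation $\pi$ of $\iGL_n(K)$, one has
\[
 \mathrm{LL}(\pi \otimes (\omega \circ \det)) = \mathrm{LL}(\pi) \otimes (\omega \circ \Art_K^{-1}),
\]
and for any irreducible smooth representation $\rho$ of $D^{\times}$,
\[
 \mathrm{JL}(\rho \otimes (\omega \circ \Nrd_{D/K})) = \mathrm{JL}(\rho) \otimes (\omega \circ \det).
\]
Both are standard and can be cited from the literature on the LLC and LJLC for $\iGL_n$.

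Next I would combine these with Theorem \ref{expJL}. For the Langlands statement, applying $\mathrm{LL}$ to $\pi_{\zeta,\chi,c,\omega} = \pi_{\zeta,\chi,c} \otimes (\omega \circ \det)$ and using the twist compatibility gives $\mathrm{LL}(\pi_{\zeta,\chi,c}) \otimes (\omega \circ \Art_K^{-1}) = \tau_{\zeta,\chi,c} \otimes (\omega \circ \Art_K^{-1})$, which equals $\tau_{\zeta,\chi,c,\omega}$ by the very definition given just before the corollary. For the Jacquet--Langlands statement, applying $\mathrm{JL}$ to $\rho_{\zeta,\chi,c,\omega} = \rho_{\zeta,\chi,c} \otimes (\omega \circ \Nrd_{D/K})$ yields $\mathrm{JL}(\rho_{\zeta,\chi,c}) \otimes (\omega \circ \det) = \pi_{\zeta,\chi,c} \otimes (\omega \circ \det) = \pi_{\zeta,\chi,c,\omega}$.

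There is essentially no obstacle; the only thing worth double-checking is that the definitions of the twisted representations at the end of Section \ref{ConsRep} are indeed the global twists (by $\omega \circ \det$, $\omega \circ \Nrd_{D/K}$, and $\omega \circ \Art_K^{-1}$ respectively) rather than the restrictions of the twisting characters to the inducing subgroups. The displayed formulas for $\Lambda_{\zeta,\chi,c,\omega}$, $\theta_{\zeta,\chi,c,\omega}$, and $\tau^0_{\zeta,\chi,c,\omega}$, together with the identities $\pi_{\zeta,\chi,c,\omega} = \mathrm{c\text{-}Ind}\,\Lambda_{\zeta,\chi,c,\omega}$, etc., are compatible with this interpretation by the projection formula for (compact) induction, so no issue arises. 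The corollary then follows in one line.
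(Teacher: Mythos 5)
Your proof is correct and takes exactly the same route as the paper's one-line argument: the paper simply states that the corollary follows from Theorem \ref{expJL} because $\mathrm{LL}$ and $\mathrm{JL}$ are compatible with character twists. You have merely spelled out the twist-compatibility identities and the bookkeeping that the paper leaves implicit.
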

\begin{proof}
This follows from Theorem \ref{expJL}, 
because $\mathrm{LL}$ and $\mathrm{JL}$ are compatible with character twists. 
\end{proof}

\section{Geometric realization}\label{GeomRela}
Recall that $n_1 =\gcd (n,p^m -1)$. 
We fix 
$s \in \mu_{\frac{n_1(q-1)}{p^m-1}}(K)$. 
We take an element 
$r \in \mu_{q-1}(K)$ 
such that 
$r^{\frac{p^m-1}{n_1}}=s$. 
We put 
\[
 H_{\fX_r} =H_{\mathrm{c}}^{n-1}(\ol{\fX}_r,\overline{\mathbb{Q}}_{\ell})\biggl( \frac{n-1}{2} \biggr)
\]
as $H_r$-representations. 
\begin{lem}\label{lem:indep}
The isomorphism class of 
$\mathrm{c\mathchar`-Ind}_{H_r}^G 
 H_{\fX_r}$ depends only on $s$. 
\end{lem}
\begin{proof}
Assume that $r,r' \in \mu_{q-1}(K)$ 
satisfy 
\[
 r^{\frac{p^m-1}{n_1}}=r'^{\frac{p^m-1}{n_1}}=s. 
\]
Then we have $L_r =L_{r'}$. 
Hence, there is $(g,d) \in (\iGL_n (K) \times D^{\times})^0$ 
such that 
$\xi_r (g,d)=\xi_{r'}$ by Lemma \ref{CMtrans}. 
Then we have $\fX_r (g,d)=\fX_{r'}$. 
Threfore we obtain the calim. 
\end{proof}
We put 
\[
 \Pi_s= \mathrm{c\mathchar`-Ind}_{H_r}^G 
 H_{\fX_r}. 
\]
For simplicity, we write
$G_1$ and $G_2$ for $\mathit{GL}_n(K)$ and $D^{\times} \times W_K$ 
respectively, and consider them as subgroups of $G$. 
We put 
\[
 H =\{ g \in U_{\fI}^1 \mid \det (g)=1 \}. 
\] 
We have $H=H_r \cap G_1$ by Proposition \ref{gda}.
Let $\overline{H}_r$ be the 
image of $H_r$ in $G/G_1\simeq G_2$. 

Let 
$a \in \mu_{q-1}(K)$. 
We define a character 
$\Lambda_{r}^a \colon U_{\mathfrak{I}}^1 \to 
\overline{\mathbb{Q}}_{\ell}^{\times}$ by 
\[
 \Lambda_{r}^a (x) = 
 (\psi_K \circ \tr)((a \varphi_{r,n})^{-1}(x-1)) \quad 
 \textrm{ for $x \in U_{\mathfrak{I}}^1$}. 
\]
Let $\pi$ be a smooth irreducible 
representation of $\iGL_n (K)$. 

\begin{lem}\label{lf} 
If $\pi$ is not essentially simple supercuspidal, 
then we have 
$\Hom_H (\Lambda_r^a ,\pi)=0$. 
Further, we have 
\[
 \dim \mathrm{Hom}_{H }
 ( \Lambda_{r}^a ,\pi_{\zeta,\chi,c,\omega} )=
 \begin{cases}
  1\quad  & \textrm{if $a^n r=\zeta$},  \\
  0\quad  &  \mathrm{otherwise}.
 \end{cases}
\]
\end{lem}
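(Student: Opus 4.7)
My plan is to combine the theory of affine generic characters of
$U_{\fI}^1$ with Frobenius reciprocity and Mackey decomposition
applied to the compact induction defining $\pi_{\zeta,\chi,c,\omega}$.

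For the first assertion, note that $\Lambda_r^a$ factors through
$U_{\fI}^1 / U_{\fI}^2$, and its affine simple root coefficients
(read off from $(a\varphi_{r,n})^{-1}$ via the trace pairing)
are all non-zero, so $\Lambda_r^a$ is an affine generic character.
Any extension of $\Lambda_r^a|_H$ to $U_{\fI}^1$ differs from
$\Lambda_r^a$ by a twist $\chi \circ \det$ for some character
$\chi$ of $1 + \fp$; extending $\chi$ to a character $\tilde{\chi}$
of $K^{\times}$, if $\Hom_H(\Lambda_r^a, \pi) \neq 0$ then
$\pi \otimes (\tilde{\chi}^{-1} \circ \det)$ contains the affine
generic character $\Lambda_r^a$ on $U_{\fI}^1$. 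The characterization
of simple supercuspidals via affine generic characters
(\cite[Section 2]{BHLepi}) then forces this twist to be simple
supercuspidal, whence $\pi$ is essentially simple supercuspidal.

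For the second assertion, write $P = L_\zeta^{\times} U_{\fI}^1$;
Frobenius reciprocity and Mackey give
\[
 \Hom_H(\Lambda_r^a, \pi_{\zeta,\chi,c,\omega})
 = \bigoplus_{g \in H \backslash \iGL_n(K) / P}
 \Hom_{H \cap g P g^{-1}}(\Lambda_r^a, {}^g \Lambda_{\zeta,\chi,c,\omega}),
\]
each summand being at most one-dimensional. When $a^n r = \zeta$,
I take $g_0 = \mathrm{diag}(1, a, a^2, \ldots, a^{n-1}) \in \fI^{\times}$:
a direct matrix computation shows $g_0 \varphi_\zeta g_0^{-1} = a \varphi_r$,
hence $g_0 \varphi_{\zeta,n}^{-1} g_0^{-1} = (a \varphi_{r,n})^{-1}$.
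Since $g_0 \in \fI^{\times}$ normalizes $U_{\fI}^1$,
$H \subset g_0 P g_0^{-1}$; combined with the triviality of
$\omega \circ \det$ on $H$, the trace-pairing identity
$\tr\bigl(g_0 \varphi_{\zeta,n}^{-1} g_0^{-1}(u-1)\bigr)
 = \tr\bigl((a\varphi_{r,n})^{-1}(u-1)\bigr)$ for $u \in H$ yields
${}^{g_0} \Lambda_{\zeta,\chi,c,\omega}|_H = \Lambda_r^a|_H$,
so this coset contributes one dimension.

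The main obstacle is the uniqueness of the contributing coset when
$a^n r = \zeta$ together with non-existence of any contribution when
$a^n r \neq \zeta$; both follow from the standard intertwining
theorem for affine generic characters, which asserts that the
$\iGL_n(K)$-intertwining of $\Lambda_{\zeta,\chi,c,\omega}|_{U_{\fI}^1}$
equals $P$ itself. Any $g$ producing a non-zero summand must then
satisfy $g_0^{-1} g \in P$, and hence lies in the double coset of
$g_0$; and when $a^n r \neq \zeta$ the elements
$(a \varphi_r)^n = a^n r \varpi$ and $\varphi_\zeta^n = \zeta \varpi$
are distinct in $K^{\times}$, precluding the required intertwining
of simple strata and giving dimension zero.
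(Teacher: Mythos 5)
Your first assertion is handled by essentially the same idea as the paper: both arguments reduce ``$\pi$ contains $\Lambda_r^a$ on $H$'' to ``some $\chi\circ\det$-twist of $\pi$ contains an affine generic character on $U_{\fI}^1$,'' whence simple supercuspidality by \cite{BHLepi}. You get there via Clifford theory for the normal subgroup $H\subset U_{\fI}^1$, the paper via $\Ind_{K^\times H}^{K^\times U_{\fI}^1}$ and Frobenius reciprocity; both are fine. A small slip in your explicit verification: with $g_0=\mathrm{diag}(1,a,a^2,\ldots,a^{n-1})$ one gets $g_0\varphi_{M,\zeta}g_0^{-1}=a^{-1}\varphi_{M,a^n\zeta}$, not $a\varphi_{M,r}$; you want $g_0=\mathrm{diag}(a^{n-1},\ldots,a,1)$ so that $g_0\varphi_{M,\zeta}g_0^{-1}=a\varphi_{M,r}$ when $a^n r=\zeta$, matching the paper's identity $\pi^a_{r,\chi',c'}\simeq\pi_{a^n r,\chi',\chi'(a)c'}$.

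The second assertion is where there is a genuine gap. You write the Mackey decomposition over $H\backslash G_1/P$, so the $g$-summand is $\Hom_{H\cap gPg^{-1}}(\Lambda_r^a,{}^g\Lambda)$, and nonvanishing is an equality of characters \emph{on} $H\cap gPg^{-1}$. But the ``standard intertwining theorem for affine generic characters'' that you invoke controls intertwining with respect to $U_{\fI}^1$: it says $g$ lies in $I_{G_1}(\Lambda_\zeta|_{U_{\fI}^1})=P$ if and only if ${}^g\Lambda_\zeta$ agrees with $\Lambda_\zeta$ on $U_{\fI}^1\cap gU_{\fI}^1g^{-1}$. Since $H=\ker(\det\colon U_{\fI}^1\to U_K^1)$ is a proper subgroup of $U_{\fI}^1$, agreement on the $H$-intersection does \emph{not} imply agreement on the $U_{\fI}^1$-intersection: the two characters can still differ by a nontrivial $\chi\circ\det$ on the ambient intersection. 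So the step ``any $g$ producing a non-zero summand must satisfy $g_0^{-1}g\in P$'' does not follow from what you cite, and this is precisely the point where the $\zeta=a^nr$ dichotomy and the count ``exactly one'' have to be earned. The paper avoids the issue by never invoking an intertwining theorem over $H$: it passes from $H$ to $K^\times U_{\fI}^1$ via $\Ind_{K^\times H}^{K^\times U_{\fI}^1}\simeq\bigoplus_\phi\Lambda^a_{r,\omega_\pi}\otimes\phi'$ with $\phi$ running over $(U_K^1/(U_K^1)^n)^\vee$, then decomposes $\mathrm{c\mathchar`-Ind}_{K^\times U_{\fI}^1}^{G_1}$ into the simple supercuspidals $\pi^a_{r,\chi',c'}$, and finally kills every $\phi\neq 1$ by the Artin conductor bound $\mathrm{a}(\pi^a_{r,\chi,c'}\otimes\tilde\phi)=\mathrm{a}(\tilde\phi)\geq 2n>n+1=\mathrm{a}(\pi_{\zeta,\chi,c})$. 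To make your Mackey approach rigorous you would need an analogous argument: after noting that the two characters agree on $H\cap gPg^{-1}$, promote that to an agreement of $\Lambda_\zeta$ with $\Lambda_r^a\cdot(\chi\circ\det)^{-1}$ on $U_{\fI}^1\cap gPg^{-1}$ for some $\chi$, and then rule out $\chi\neq 1$ by a conductor (or explicit stratum) computation. As written, the uniqueness of the contributing coset is asserted, not proved.
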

\begin{proof}
We assume that $\Hom_H (\Lambda_r^a ,\pi) \neq 0$, 
and show that $\pi$ is essentially simple supercuspidal. 
Let $\omega_{\pi}$ be the central character of $\pi$. 
Then $\omega_{\pi}$ is trivial on 
$K^{\times} \cap H$ by $\Hom_H (\Lambda_r^a ,\pi) \neq 0$. 
Hence, we may assume that 
$\omega_{\pi}$ is trivial on 
$K^{\times} \cap U_{\fI}^1$, 
changing $\pi$ by a character twist. 
Then, there is a character 
$\Lambda_{r,\omega_{\pi}}^a \colon K^{\times} U_{\fI}^1 \to \ol{\bQ}_{\ell}^{\times}$ 
such that 
\[
 \Lambda_{r,\omega_{\pi}}^a|_{U_{\fI}^1} =\Lambda_r^a , \quad 
 \Lambda_{r,\omega_{\pi}}^a|_{K^{\times}}=\omega_{\pi} . 
\]
Then we have 
\begin{equation}\label{HomHHU}
 \mathrm{Hom}_{H }
 ( \Lambda_{r}^a ,\pi ) \simeq 
 \mathrm{Hom}_{K^{\times} H}
 ( \Lambda_{r,\omega_{\pi}}^a ,\pi ) \simeq 
 \mathrm{Hom}_{K^{\times} U_{\fI}^1} 
 \Bigl( \Ind_{K^{\times} H}^{K^{\times} U_{\fI}^1} 
 (\Lambda_{r,\omega_{\pi}}^a |_{K^{\times} H} ) ,\pi \Bigr) 
\end{equation}
by Frobenius reciprocity, since 
$K^{\times} U_{\mathfrak{I}}^1/(K^{\times} H)$ 
is compact. 
We have the natural isomorphism 
\begin{equation}\label{UI1iso}
 K^{\times} U_{\fI}^1 / (K^{\times} H) 
 \xrightarrow{\det}
 (K^{\times})^n U_K^1 / (K^{\times})^n 
 \simeq 
 U_K^1 / (U_K^1)^n.  
\end{equation}
For a smooth character 
$\phi$ of $U_K^1 / (U_K^1)^n$, 
let $\phi'$ denote the character of $K^{\times} U_{\fI}^1$ 
obtained by $\phi$ and the isomorphism \eqref{UI1iso}. 
We have a natural isomorphism 
\begin{equation}\label{IndHU}
 \Ind_{K^{\times} H}^{K^{\times} U_{\fI}^1} 
 (\Lambda_{r,\omega_{\pi}}^a |_{K^{\times} H} )
 \simeq 
 \bigoplus_{\phi \in (U_K^1 / (U_K^1)^n)^{\vee}} 
 \Lambda_{r,\omega_{\pi}}^a \otimes \phi' . 
\end{equation}
Let $\phi$ be a smooth character of $U_K^1 / (U_K^1)^n$, 
and regard it as a character of $U_K^1$. 
We extend $\phi$ to a character $\tilde{\phi}$ of $K^{\times}$ 
such that $\tilde{\phi} (\varpi)=1$ and 
$\tilde{\phi}$ is trivial on $\mu_{q-1} (K)$. 
We have 
\begin{equation}\label{HomUG1}
 \Hom_{K^{\times} U_{\fI}^1} 
 ( \Lambda_{r,\omega_{\pi}}^a \otimes \phi' ,\pi ) \simeq 
 \Hom_{G_1} 
 \Bigl( \bigl( \mathrm{c\mathchar`-Ind}_{K^{\times} U_{\fI}^1}^{G_1} 
 \Lambda_{r,\omega_{\pi}}^a \bigr) \otimes \tilde{\phi} ,\pi \Bigr) 
\end{equation}
by Frobenius reciprocity. 
We take $\chi' \in (k^{\times})^{\vee}$ such that 
$\chi'(\bar{x})=\omega_{\pi} (x)$ for $x \in \mu_{q-1} (K)$. 
For $c' \in \ol{\bQ}_{\ell}^{\times}$, 
we define the character 
$\Lambda^a_{r,\chi',c'} \colon L_r^{\times}U_{\mathfrak{I}}^1
\to \overline{\mathbb{Q}}_{\ell}^{\times}$
by 
\[
 \Lambda^a_{r,\chi',c'} |_{U_{\mathfrak{I}}^1}=\Lambda_{r}^a, \quad 
 \Lambda^a_{r,\chi',c'} (\varphi_{M,r})=c', \quad
 \Lambda^a_{r,\chi',c'} (x)=\chi'(\bar{x}) \quad
 \textrm{ for $x \in \mu_{q-1}(K)$}. 
\]
We put 
\[
 \pi^a_{r,\chi',c'}= 
\mathrm{c\mathchar`-Ind}_{L_r^{\times} U_{\mathfrak{I}}^1}^{G_1} 
 \Lambda^a_{r,\chi',c'}. 
\]
Then we have 
\begin{equation}\label{IndLa}
 \mathrm{c\mathchar`-Ind}_{K^{\times} U_{\fI}^1}^{G_1} 
 \Lambda_{r,\omega_{\pi}}^a \simeq 
 \bigoplus_{c' \in \ol{\bQ}_{\ell}^{\times}} 
 \pi^a_{r,\chi',c'}. 
\end{equation}
Note that 
\begin{equation}\label{piapi}
 \pi^a_{r,\chi',c'}
 \simeq 
 \pi_{a^n r,\chi',\chi' (a) c'}
\end{equation}
by the constructions. 
Then we see that $\pi$ is simple supercuspidal by 
\eqref{HomHHU}, \eqref{IndHU}, \eqref{HomUG1}, \eqref{IndLa}, 
\eqref{piapi} and the assumption $\Hom_H (\Lambda_r^a ,\pi) \neq 0$. 

Let $\chi' \in (k^{\times})^{\vee}$. 
We use the same notations as above for such $\chi'$. 
For an irreducible supercuspidal representation $\pi$ of $G_1$, 
we write $\mathrm{a}(\pi)$ 
for its Artin conductor exponent as in \cite[1.2]{BHLepi}. 
We have $\mathrm{a}(\pi^a_{r,\chi',c'})=n+1$ by \eqref{piapi}. 
Hence, 
if $\phi \neq 1$, 
we have 
\[
 \mathrm{a}(\pi^a_{r,\chi',c'}\otimes \tilde{\phi})
 =n\mathrm{a}(\tilde{\phi}) \geq 2n 
\]
by 
$\mathrm{a}(\tilde{\phi}) \geq 2$ and 
\cite[6.5 Theorem (ii)]{BHKLocRS}. 
Therefore, we obtain 
\[
 \dim
 \mathrm{Hom}_{G_1} 
 (\pi^a_{r,\chi',c'} \otimes \tilde{\phi},\pi_{\zeta,\chi,c} )= 
 \begin{cases}
 1 \quad & \textrm{if $\phi=1$, $a^n r=\zeta$ and $\chi' (a) c'= c$},\\
 0 \quad & \textrm{otherwise} 
 \end{cases}
\]
by \eqref{piapi} and \cite[2.2]{BHLepi}. 
To show the second claim, 
we may assume that $\omega=1$. 
Hence, we obtain the second claim by the above discussion, 
using that $\omega_{\pi_{\zeta,\chi,c}}$ is trivial on 
$U_K^1$. 
\end{proof}

\begin{prop}\label{pro} 
\begin{enumerate}
\item\label{enu:dHHpi}
If $\pi$ is not essentially simple supercuspidal, 
we have 
$\mathrm{Hom}_{H} (H_{\fX_r},\pi )=0$. 
Further, we have 
\begin{equation*}\label{n_qq}
 \dim\mathrm{Hom}_{H}
 (H_{\fX_r},\pi_{\zeta,\chi,c,\omega} )=
 \begin{cases}
 p^e n_1 \quad  &\textrm{if $\zeta^{\frac{p^m-1}{n_1}}=s$},\\
 0 \quad &  \textrm{otherwise}.
 \end{cases} 
\end{equation*}
\item
We have $L_r^{\times}U_D^1 \times 
W_{E_r} \subset \overline{H}_r$ and 
an injective homomorphism 
\[
 \theta_{r,\chi,c,\omega} \otimes \tau_{r,\chi,c,\omega}^0 \hookrightarrow
 \mathrm{Hom}_{H} (H_{\fX_r},\pi_{r,\chi,c,\omega} ) 
\]
as $L_r^{\times}U_D^1 \times W_{E_r}$-representations. 
\end{enumerate}
\end{prop}
\begin{proof}
By \eqref{HXdec}, we have a decomposition 
\begin{equation}\label{HfXtau}
 H_{\fX_r} 
 \simeq 
 \bigoplus_{\psi \in \bF_{p^m}^{\vee} \backslash \{1\}} 
 \tau_{\psi,n} 
\end{equation}
as representations of $Q \rtimes \bZ$. 
By Proposition \ref{gda} and \eqref{HfXtau}, 
we have 
\begin{equation}\label{fn}
 H_{\fX_r} 
 \simeq 
 \bigoplus_{a \in \mu_{p^m-1}(K)} (\Lambda_{r}^{-a})^{\oplus p^e}
\end{equation}
as $H$-representations. 
By \eqref{fn}, we have 
\begin{equation*}\label{as}
 \mathrm{Hom}_{H} (H_{\fX_r},\pi_{\zeta,\chi,c,\omega} ) 
 \simeq 
 \bigoplus_{a \in \mu_{p^m-1}(K),\, (-a)^n r =\zeta} 
 \mathrm{Hom}_{H } (\Lambda_{r}^{-a} , \pi_{\zeta,\chi,c,\omega} 
 )^{\oplus p^e}. 
\end{equation*}
The cardinality of 
\[
 \{ a \in \mu_{p^m-1}(K) \mid (-a)^n r=\zeta \} 
\]
equals $n_1$ if $\zeta^{\frac{p^m-1}{n_1}} = s$ and zero
otherwise. 
Hence the first claim follows from Lemma \ref{lf}. 

We prove the second claim. 
We consider the element 
\[
 (\varphi_{D,r},1) \in L_r^{\times}U_D^1 \times W_{E_r} \subset G_2 
\]
and its lifting 
$\mathbf{g}_r \in G$ in \eqref{g_r} 
with respect to $G \to G_2$. 
We have 
$\mathbf{g}_r \in H_r$ 
by Proposition \ref{frob} \ref{enu:actgr}. 
The element $(\varphi_{D,r},1)$
acts on 
$\theta_{r,\chi,c,\omega} \otimes \tau_{r,\chi,c,\omega}^0$
as scalar multiplication by 
$c \omega((-1)^{n-1} \varpi_r )$, 
because $\Nrd_{D/K}(\varphi_{D,r})=(-1)^{n-1} \varpi_r$. 
By Proposition \ref{frob} \ref{enu:detgr}, Corollary \ref{AS3} and 
\cite[Proposition 4.2.3]{ITsimptame}, 
the element $\mathbf{g}_r$ 
acts on $\mathrm{Hom}_{H}(H_{\fX_r},\pi_{r,\chi,c,\omega})$ 
as scalar multiplication by 
$c \omega((-1)^{n-1} \varpi_r )$.

Let $zd \in \mathcal{O}_K^{\times}U_D^1$ 
with $z \in \mu_{q-1}(K)$
and $d \in U_D^1$.
Let 
$g=(a_{i,j})_{1 \leq i,j \leq n} \in U_\mathfrak{I}^1$
be the element defined by 
$a_{1,1}=\Nrd_{D/K}(d)$, $a_{i,i}=1$ for $2 \leq i \leq n$ and $a_{i,j}=0$ if 
$i \neq j$.
We have $\mathrm{det}(g)=\Nrd_{D/K}(d)$ and 
$(zg,zd,1) \in H_r$. 
The element 
$(zd,1) \in L_r^{\times}U_D^1 \times W_{E_r}$ 
acts on 
$\theta_{r,\chi,c,\omega} \otimes \tau_{r,\chi,c,\omega}^0$ 
as scalar multiplication by
\[
 \chi(\bar{z}) \theta_{r,\chi,c}(d) \omega(\Nrd_{D/K}(zd)). 
\]
We have the subspace 
\begin{equation}\label{iss}
 \Hom_{H} (\tau_{\psi_0'^{-1},n} ,\pi_{r,\chi,c,\omega} ) 
 \subset  \mathrm{Hom}_{H}
 (H_{\fX_r},\pi_{r,\chi,c,\omega} ) 
\end{equation}
by the decomposition \eqref{HfXtau}. 
By Remark \ref{Ktri}, Proposition \ref{gda} and 
\cite[Propositions 4.2.1 and 4.5.1]{ITsimptame}, 
the element $(zg,zd,1)$ 
acts on the subspace in \eqref{iss} 
as scalar multiplication by
\[
 \chi(\bar{z}) \theta_{r,\chi,c} (d) \omega(\det (zg)). 
\]

Let $\sigma \in W_{E_r}$ 
such that $n_{\sigma}=1$.
We take $\mathbf{g}_{\sigma}$ as in \eqref{gsig}. 
By Proposition \ref{LT}, 
the element 
$\mathbf{g}_{\sigma}$ acts on the subspace
\eqref{iss} by 
\[
 \chi (\bar{b}_{\sigma}) \tau_{r,\psi_0'}^0(\sigma) \omega (\det(g_{\sigma})). 
\] 
On the other hand,  
the element 
$(\varphi_{D,r}^{-1},\sigma) \in L_r^{\times}U_D^1 \times W_{E_r}$ acts on 
$\theta_{r,\chi,c,\omega} \otimes \tau_{r,\chi,c,\omega}^0$ 
by 
\[
 (\chi \circ \nu_r )(\sigma) \tau_{r,\psi_0'}^0(\sigma) 
 \omega (\Nr_{E_r/K}(u_{\sigma})). 
\] 
Hence, the required assertion follows from 
$\nu_r(\sigma)=\bar{b}_{\sigma}$ and 
$\Nr_{E_r/K}(u_{\sigma})=\det (g_{\sigma})$. 
\end{proof}

\begin{prop}\label{map} 
If 
$\pi$ is not essentially simple supercuspidal, 
then we have 
$\mathrm{Hom}_{\iGL_n (K)} ( \Pi_s ,\pi )=0$. 
Further, we have 
\[
 \mathrm{Hom}_{\mathit{GL}_n(K)} (\Pi_s,\pi_{\zeta,\chi,c,\omega} )
 \simeq 
 \begin{cases}
 \rho_{\zeta,\chi,c,\omega} \otimes \tau_{\zeta,\chi,c,\omega} \quad  & 
 \textrm{if $\zeta^{\frac{p^m-1}{n_1}}=s$}, \\
 0 \quad & \textrm{otherwise}
 \end{cases}
\]
as $D^{\times} \times W_K$-representations.
\end{prop}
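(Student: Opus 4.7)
The plan is to reduce to Proposition \ref{pro} via a Frobenius reciprocity exchanging $\mathrm{Hom}_{G_1}(\Pi_s, \pi)$ for an $\mathrm{Ind}_{\overline{H}_r}^{G_2}$ of $\mathrm{Hom}_H(H_{\fX_r}, \pi)$, and then to identify the result using irreducibility of $\rho \otimes \tau$ together with a dimension count.

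First I would establish a $G_2$-equivariant isomorphism
\[
 \mathrm{Hom}_{G_1}(\Pi_s, \pi) \cong \mathrm{Ind}_{\overline{H}_r}^{G_2} \mathrm{Hom}_H(H_{\fX_r}, \pi).
\]
Since $G = G_1 \times G_2$ is a direct product and $H = H_r \cap G_1$ is the kernel of the projection $H_r \twoheadrightarrow \overline{H}_r$ (hence normal in $H_r$), the double cosets $G_1 \backslash G / H_r$ are parametrized by $G_2 / \overline{H}_r$ via projection, and the Mackey decomposition gives $\Pi_s|_{G_1} \cong \bigoplus_{g_2 \in G_2/\overline{H}_r} \mathrm{c\mathchar`-Ind}_H^{G_1}(H_{\fX_r}|_H)$. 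Applying $\mathrm{Hom}_{G_1}(-, \pi)$ and Frobenius reciprocity for compact induction yields the displayed isomorphism; the $\overline{H}_r$-action on $\mathrm{Hom}_H(H_{\fX_r}, \pi)$ is defined by lifting elements to $H_r$, which is well-defined by the normality of $H$. With this setup the vanishing statements follow immediately from Proposition \ref{pro}.1: both when $\pi$ is not essentially simple supercuspidal and when $\zeta^{(p^m-1)/n_1} \neq s$, the inner $\mathrm{Hom}_H$ vanishes.

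For the essentially simple supercuspidal case $\pi = \pi_{\zeta,\chi,c,\omega}$ with $\zeta^{(p^m-1)/n_1} = s$, I would exploit that the isomorphism class of $\Pi_s$ depends only on $s$ (stated just before Proposition \ref{pro}) to take $r = \zeta$. Proposition \ref{pro}.2 then yields an injection $\theta_{\zeta,\chi,c,\omega} \otimes \tau^0_{\zeta,\chi,c,\omega} \hookrightarrow \mathrm{Hom}_H(H_{\fX_\zeta}, \pi_{\zeta,\chi,c,\omega})$ of $L_\zeta^\times U_D^1 \times W_{E_\zeta}$-representations. Combining this injection with the first step and induction in stages produces a nonzero $G_2$-equivariant morphism
\[
 \Phi \colon \rho_{\zeta,\chi,c,\omega} \otimes \tau_{\zeta,\chi,c,\omega} \to \mathrm{Hom}_{G_1}(\Pi_s, \pi_{\zeta,\chi,c,\omega}).
\]
The source is irreducible as a $D^\times \times W_K$-representation, being the external tensor product of the irreducible JL and LL correspondents by Corollary \ref{expJLes}; hence $\Phi$ is injective. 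A dimension count---using $\dim \mathrm{Hom}_H(H_{\fX_\zeta}, \pi_{\zeta,\chi,c,\omega}) = p^e n_1$ from Proposition \ref{pro}.1, $[\overline{H}_\zeta : L_\zeta^\times U_D^1 \times W_{E_\zeta}] = n_1$, $[W_K : W_{E_\zeta}] = n'$, and $\dim \tau^0_{\zeta,\chi,c,\omega} = p^e$---shows that both sides have dimension $[D^\times : L_\zeta^\times U_D^1] \cdot n$, so $\Phi$ is an isomorphism.

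The hard part will be the Frobenius reciprocity of the first step: defining the $\overline{H}_r$-action on $\mathrm{Hom}_H(H_{\fX_r}, \pi)$ carefully and verifying that the Mackey decomposition of $\Pi_s|_{G_1}$ assembles into the claimed induced $G_2$-representation. Once this is in place, the remainder of the argument---vanishing from Proposition \ref{pro}.1 and the identification via irreducibility plus a dimension count---is essentially routine.
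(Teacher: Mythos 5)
Your proposal is correct and follows essentially the same strategy as the paper: Mackey decomposition of $\Pi_s|_{G_1}$, Frobenius reciprocity, the input from Proposition \ref{pro}, irreducibility of $\rho_{\zeta,\chi,c,\omega} \otimes \tau_{\zeta,\chi,c,\omega}$, and a dimension count. The one organizational difference is worth noting: you want to establish the $G_2$-equivariant isomorphism $\mathrm{Hom}_{G_1}(\Pi_s,\pi) \cong \mathrm{Ind}_{\overline{H}_r}^{G_2}\mathrm{Hom}_H(H_{\fX_r},\pi)$ directly from Mackey theory (which requires carefully defining the $\overline{H}_r$-action on $\mathrm{Hom}_H(H_{\fX_r},\pi)$ via lifts — this works because $H$ is normal in $H_r$, but it is exactly the verification you flag as ``the hard part''). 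The paper sidesteps that verification by treating \eqref{g22} as a mere dimension count and then separately producing the $G_2$-equivariant map \eqref{tti} via Frobenius reciprocity, concluding it is an isomorphism only after irreducibility of the source is obtained from \eqref{ff}. Both routes deliver the same endgame, and your choice is arguably cleaner at the cost of a more delicate Mackey argument.
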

\begin{proof}
For $g \in H_r \backslash G/G_1$, 
we choose an element $\tilde{g} \in G_2$ 
whose image in $\overline{H}_r \backslash G_2$ equals $g$ 
under the natural isomorphism 
$H_r \backslash G/G_1 \simeq \overline{H}_r \backslash G_2$. 
We put 
$H^{\tilde{g}}=\tilde{g}^{-1}H  \tilde{g}$.  
Let $H_{\fX_r}^{\tilde{g}}$ denote the representation of 
$H^{\tilde{g}}$ which is the conjugate of $H_{\fX_r}$ 
by $\tilde{g}$. 
Then, we have 
\begin{equation}\label{g1}
 \Pi_s|_{G_1} \simeq \bigoplus_{g \in H_r \backslash G/G_1} 
 \mathrm{c\mathchar`-Ind}_{H^{\tilde{g}}}^{G_1}
 H_{\fX_r}^{\tilde{g}} 
 \simeq 
 \bigoplus_{\overline{H}_r \backslash G_2} 
 \mathrm{c\mathchar`-Ind}_{H}^{G_1}H_{\fX_r} 
\end{equation}
as $G_1$-representations by Mackey's decomposition theorem, 
since we have $H^{\tilde{g}}=H$ and 
$H_{\fX_r} \simeq H_{\fX_r}^{\tilde{g}}$ 
as $H$-representations. 
By \eqref{g1} and Frobenius reciprocity, 
we acquire 
\begin{equation}\label{g22}
 \mathrm{Hom}_{G_1} 
 (\Pi_s,\pi_{\zeta,\chi,c,\omega} ) \simeq 
 \bigoplus_{\overline{H}_r \backslash G_2} 
 \mathrm{Hom}_{H}(H_{\fX_r},\pi_{\zeta,\chi,c,\omega} ). 
\end{equation}

If $\zeta^{\frac{p^m-1}{n_1}} \neq s$, 
the required assertion follows from \eqref{g22} and 
Proposition \ref{pro} \ref{enu:dHHpi}. 
Now, assume that $\zeta^{\frac{p^m-1}{n_1}}=s$. 
Without loss of generality, we may assume that $\zeta$ equals $r$ 
by Lemma \ref{lem:indep}. 
By Proposition \ref{pro} and Frobenius reciprocity,
we obtain a non-zero map
\begin{equation}\label{vap}
 \mathrm{Ind}_{L_r^{\times} U_D^1 \times W_{E_r}}^{\overline{H}_r} 
 ( 
 \theta_{r,\chi,c,\omega} \otimes \tau_{r,\chi,c,\omega}^0 
 ) 
 \to \mathrm{Hom}_{H } (H_{\fX_r},\pi_{r,\chi,c,\omega} ). 
\end{equation}
By applying 
$\mathrm{Ind}_{\overline{H}_r}^{G_2}$ to the map \eqref{vap}, 
we acquire a non-zero map 
\begin{equation}\label{ff}
 \rho_{r,\chi,c,\omega} \otimes \tau_{r,\chi,c,\omega}
 \to \mathrm{Ind}_{\overline{H}_r}^{G_2}\mathrm{Hom}_{H}
 ( H_{\fX_r},\pi_{r,\chi,c,\omega} ).
\end{equation}
We have $\dim \rho_{r,\chi,c,\omega}=(q^n-1)/(q-1)$ and 
$\dim\tau_{r,\chi,c,\omega}=n$. 
Moreover, we have 
\[
 [G_2:\overline{H}_r ]=[F_r : K][D^{\times}: L_r^{\times} U_D^1 ] 
 =\frac{n'(q^n-1)}{n_1(q-1)} 
\]
by the exact sequence 
\[
 1 \to L_r^{\times} U_D^1 \to 
 \overline{H}_r \to W_{F_r} \to 1. 
\]
Hence, the both sides of \eqref{ff} are 
$n(q^n-1)/(q-1)$-dimensional by Proposition \ref{pro} \ref{enu:dHHpi}. 
Since 
$\rho_{r,\chi,c,\omega} \otimes \tau_{r,\chi,c,\omega}$ 
is an irreducible representation of $G_2$, 
we know that \eqref{ff} is an isomorphism 
as $G_2$-representations.
On the other hand, 
we have a non-zero map
\begin{equation}\label{tti}
 \mathrm{Ind}^{G_2}_{\overline{H}_r}\mathrm{Hom}_{H}
 (H_{\fX_r},\pi_{r,\chi,c,\omega} ) 
 \to \mathrm{Hom}_{G_1} (\Pi_s,\pi_{r,\chi,c,\omega} ) 
\end{equation}
induced by a surjective homomorphism 
$\Pi_s|_{H_r} \to H_{\fX_r}$ of $H_r$-representations 
and Frobenius reciprocity. 
Then \eqref{tti} is an isomorphism, 
since the left hand side 
is an irreducible representation of $G_2$ and the 
both sides have the same dimension by \eqref{g22}.
Hence, the required assertion follows from 
the isomorphisms \eqref{ff} and \eqref{tti}. 
\end{proof}

\begin{thm}\label{Pireal}
Let $\mathrm{LJ}$ be the inverse of $\mathrm{JL}$ in Proposition 
\ref{expJL}.
We put 
\[
 \Pi =\bigoplus_{s \in \mu_{\frac{n_1 (q-1)}{p^m -1}}(K)} \Pi_s. 
\] 
Let $\pi$ be a smooth irreducible representation of 
$\iGL_n (K)$. 
Then, we have 
\[
 \mathrm{Hom}_{\mathit{GL}_n(K)} ( \Pi,\pi ) \simeq 
 \begin{cases}
  \mathrm{LJ}(\pi) \otimes \mathrm{LL}(\pi) & 
  \textrm{if $\pi$ is essentially simple supercuspidal,} \\ 
  0 & \textrm{otherwise} 
 \end{cases}
\]
as $D^{\times} \times W_K$-representations.
\end{thm}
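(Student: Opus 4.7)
The plan is to reduce this statement almost directly to Proposition \ref{map} together with Corollary \ref{expJLes}, using only the fact that $\Pi$ is a finite direct sum of the representations $\Pi_s$.

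First I would observe that, because $\Pi = \bigoplus_{s} \Pi_s$ indexed over the finite set $\mu_{n_1(q-1)/(p^m-1)}(K)$, we have the canonical decomposition
\[
 \mathrm{Hom}_{\iGL_n(K)}(\Pi,\pi) \simeq
 \bigoplus_{s \in \mu_{n_1(q-1)/(p^m-1)}(K)} \mathrm{Hom}_{\iGL_n(K)}(\Pi_s,\pi)
\]
as $D^\times \times W_K$-representations, since the decomposition is respected by the $G_2$-action. Thus it suffices to analyze each summand.

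Next, if $\pi$ is not essentially simple supercuspidal, then Proposition \ref{map} asserts that every individual summand vanishes, so $\mathrm{Hom}_{\iGL_n(K)}(\Pi,\pi) = 0$, as required.

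Finally, suppose $\pi$ is essentially simple supercuspidal, so $\pi \simeq \pi_{\zeta,\chi,c,\omega}$ for a unique quadruple $(\zeta,\chi,c,\omega)$ (up to the indeterminacy of $\omega$, which is absorbed into the other parameters in the standard way). By Proposition \ref{map}, the $s$-summand is nonzero if and only if $\zeta^{(p^m-1)/n_1}=s$, which singles out exactly one value of $s$ in the indexing set; for that $s$ the summand equals $\rho_{\zeta,\chi,c,\omega}\otimes \tau_{\zeta,\chi,c,\omega}$ as a $D^\times \times W_K$-representation. Combining with Corollary \ref{expJLes}, which identifies $\rho_{\zeta,\chi,c,\omega} = \mathrm{LJ}(\pi_{\zeta,\chi,c,\omega})$ and $\tau_{\zeta,\chi,c,\omega} = \mathrm{LL}(\pi_{\zeta,\chi,c,\omega})$, we obtain the desired isomorphism $\mathrm{Hom}_{\iGL_n(K)}(\Pi,\pi) \simeq \mathrm{LJ}(\pi)\otimes \mathrm{LL}(\pi)$.

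Since all the real work has been done in Proposition \ref{map} (the cohomological computation and the identification of the Weil-group action via the Artin--Schreier decomposition of Section \ref{CompAS}) and in the explicit LLC/LJLC of Theorem \ref{expJL}, there is no main obstacle in this final step; it is purely a packaging of the previous results. The only mild point to check is that the surjection \eqref{hom} coupled with the finite set of $s$ actually exhausts all essentially simple supercuspidal $\pi$ exactly once each, so that no cancellation or multiplicity issue arises in the direct sum — this is immediate from the parametrization recalled in Section \ref{ConsRep}.
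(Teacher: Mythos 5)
Your proposal is correct and takes essentially the same route as the paper's own (very short) proof: both reduce to Proposition \ref{map} (summand by summand over $s$) together with the explicit LLC/LJLC of Theorem \ref{expJL}/Corollary \ref{expJLes}, noting that every essentially simple supercuspidal is of the form $\pi_{\zeta,\chi,c,\omega}$. One tiny caveat: the quadruple $(\zeta,\chi,c,\omega)$ is not literally unique, but the invariant $\zeta^{(p^m-1)/n_1}$ is determined by the isomorphism class of $\pi$ (as Proposition \ref{map} itself shows, since $\mathrm{Hom}(\Pi_s,\pi)\neq 0$ only for $s=\zeta^{(p^m-1)/n_1}$), which is all your argument actually uses.
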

\begin{proof}
This follows from Proposition \ref{expJL} 
and Lemma \ref{map}, because 
every essentially simple supercuspidal representation is 
isomorphic to 
$\pi_{\zeta,\chi,c,\omega}$ 
for some 
$\zeta \in \mu_{q-1} (K)$, $\chi \in (k^{\times})^{\vee}$, 
$c \in \overline{\mathbb{Q}}_{\ell}^{\times}$ and 
a smooth character 
$\omega \colon K^{\times} \to \ol{\bQ}_{\ell}^{\times}$.
\end{proof}


\noindent
Naoki Imai\\ 
Graduate School of Mathematical Sciences, 
The University of Tokyo, 3-8-1 Komaba, Meguro-ku, 
Tokyo, 153-8914, Japan\\ 
naoki@ms.u-tokyo.ac.jp \\

\noindent
Takahiro Tsushima\\ 
Department of Mathematics and Informatics, 
Faculty of Science, Chiba University, 
1-33 Yayoi-cho, Inage, Chiba, 263-8522, Japan\\
tsushima@math.s.chiba-u.ac.jp

\end{document}